\def\0{\mathbf{0}}
\def\eps{\varepsilon}
\def\lam{\lambda}
\def\rr{\rightarrow}
\def \< {\langle}
\def \> {\rangle}
\def\ol{\overline}
\def\ul{\underline}
\def\to{\rightarrow}
 \def\ol{\overline}    \def\ul{\underline}
\def\eps{\varepsilon}
   \def\lam{\lambda}
\def\E{{\bf E}}
\def\black#1{\textcolor{black}{#1}}
\def\red#1{\textcolor{red}{#1}}
\newtheorem{theorem}{Theorem}[section]
\newtheorem{lemma}[theorem]{Lemma}
\newtheorem{proposition}[theorem]{Proposition}
\newtheorem{corollary}[theorem]{Corollary}
\newtheorem{remark}[theorem]{Remark}
\newtheorem{example}{Example}[section]
\newtheorem{definition}[theorem]{Definition}
\newtheorem{assumption}[theorem]{Assumption} % Number assumption
\numberwithin{equation}{section}
\newcommand{\hatd}[1]{{}}
\newcommand{\bd}{\begin{displaymath}}
\newcommand{\ed}{\end{displaymath}}
\newcommand{\be}{\begin{equation}}
\newcommand{\ee}{\end{equation}}
\newcommand{\bq}{\begin{eqnarray}}
\newcommand{\eq}{\end{eqnarray}}
\newcommand{\bn}{\begin{eqnarray*}}
\newcommand{\en}{\end{eqnarray*}}
\newcommand{\dl}{\delta}
\def\wt{\widetilde}
\def\E{{\mathbb{E}}}
\newcommand{\id}{{\rm id}}
\def\YZ#1{{\textcolor{blue}{Yufei: #1}}}
\newcommand{\tr}{\textnormal{tr}}
\DeclareMathOperator*{\argmin}{arg\,min}
\newcommand{\lc}
{\mathrel{\raise2pt\hbox{${\mathop<\limits_{\raise1pt\hbox
{\mbox{$\sim$}}}}$}}}
\newcommand{\gc}
{\mathrel{\raise2pt\hbox{${\mathop>\limits_{\raise1pt\hbox{\mbox{$\sim$}}}}$}}}
\newcommand{\ec}
{\mathrel{\raise2pt\hbox{${\mathop=\limits_{\raise1pt\hbox{\mbox{$\sim$}}}}$}}}
\def\bb{\begin{equation}} \def\ee{\end{equation}}
\def\bbn{\begin{equation*}} \def\een{\end{equation*}}
\def\beqn{\begin{eqnarray}}  \def\eqn{\end{eqnarray}}
\def\beqnx{\begin{eqnarray*}} \def\eqnx{\end{eqnarray*}}
\def\bn{\begin{enumerate}} \def\en{\end{enumerate}}
\def\bd{\begin{description}} \def\ed{\end{description}}
\def\cG{\mathcal{G}}
\def\sP{\mathbb{P}}
\def\sR{{\mathbb R}}
\title{An Offline Learning Approach to Propagator Models
}
\author{Eyal Neuman} 
\author{Wolfgang Stockinger}
\author{Yufei Zhang\footnote{Corresponding author. \href{mailto:yufei.zhang@imperial.ac.uk}{yufei.zhang@imperial.ac.uk}}}
\affil{Department of Mathematics, Imperial College London}
\begin{document}

 \vspace{-0.5cm}
\maketitle

\begin{abstract}
 We consider an offline learning problem for an agent who first estimates an unknown price impact kernel from a static dataset, and then designs strategies to liquidate  a risky asset while creating transient price impact. We propose a novel approach for a nonparametric estimation of the propagator from a dataset containing 
correlated
 price trajectories, trading signals and  metaorders. We quantify the accuracy of the estimated propagator using a metric which depends explicitly on 
 the dataset.  We show that a trader who tries to minimise her execution costs by using a greedy strategy purely based on the estimated propagator will encounter suboptimality due to so-called spurious correlation between the trading strategy and the estimator and due to intrinsic uncertainty resulting from a biased cost functional. By adopting an offline reinforcement learning approach, we introduce a pessimistic loss functional taking the uncertainty of the estimated propagator into account,
 with an optimiser which eliminates the spurious correlation,  and derive an asymptotically optimal bound on the execution costs even without precise information on the true propagator. 
Numerical experiments are included to demonstrate the effectiveness of the proposed propagator estimator and the pessimistic trading strategy. 
 
\end{abstract} 

\begin{description}
\item[Mathematics Subject Classification (2010):] 
62L05,
60H30, 91G80, 68Q32, 93C73, 93E35, 62G08
%%93E20, 93E15 
\item[JEL Classification:] C02, C61, G11
\item[Keywords:] optimal portfolio liquidation, price impact, propagator models, predictive signals, Volterra stochastic control, 
%inverse problem, Tikhonov regularisation 
offline reinforcement learning, 
nonparametric estimation, 
pessimistic principle, 
regret analysis
\end{description}

%\tableofcontents
%\bigskip

\section{Introduction}  \label{sec-mot-res} 
Price impact refers to the empirical fact that execution of a large order affects the risky asset's price in an adverse and persistent manner and is leading to less favourable prices for the trader. Accurate estimation of transactions' price impact is instrumental for designing profitable trading strategies. Propagator models serve as a central tool in describing these phenomena mathematically (see \citet{bouchaud-gefen, gatheral2010no}). These models express price moves in terms of the influence of past trades, and give a reliable reduced form view on the limit order book reaction for trades execution. The model's tractability provides a convenient formulation for stochastic control problems arising from optimal portfolio liquidation \citep{AS12,GSS,AJ-N-2022,ANV-23}. 

For an agent who wishes to liquidate a large order, a common practice for avoiding undesirable execution costs due to price impact, is to split the so called \emph{metaorder} into smaller child orders described by $\{u_{t_j} :\ 0=t_1< \ldots <t_M=T \}$ over a time interval of length $T$. Propagator models assume that there exists a matrix $G = (G_{i,j})$, known as the \emph{propagator} or the \emph{price impact kernel}, such that the asset's execution price $S$ is given by
 \begin{equation} \label{p-proc} 
S_{t_{i+1}} =  S_{t_1} + \sum_{j =1}^{i} G_{i,j}  u_{t_j} 
+A_{t_i}+  \varepsilon_{t_i} ,
\quad i=1,\ldots, M,
\end{equation}
where the process $A+\eps$ represents the fundamental (or unaffected) asset price. Here $A$ is often referred to as a trading signal and $\eps$ is a mean-zero noise process (see e.g., \citet[Chapter 13]{bouchaud_bonart_donier_gould_2018}). The propagator $G_{i,j}$ typically decays for $j \gg i$ and hence the sum on the right-hand side of \eqref{p-proc} is often referred to \emph{transient price impact}. A well-known example à la Almgren and Chriss, discusses the case where all entries of $G$ are similar, then this sum represents permanent price impact. If $G = \lam \mathbb{I}$, where $\lam>0$ and $\mathbb{I}_M$ is the $\mathbb{R}^{M \times M}$ identity matrix, then the sum represents temporary price impact (see \citep{AlmgrenChriss1,OPTEXECAC00}). 

In the aforementioned setting, the trader can only observe the visible price process $S$, her own trades $u$ and the trading signal $A$. In order to quantify the price impact and to design a profitable trading strategy, the trader needs a precise estimation of the matrix $G$.
Some estimators for the convolution case (i.e., where  $G_{i,j} = G_{i-j})$ were proposed in \citep{bouchaud-gefen,Forde:2022aa, Toth_17,Benzaquen_22} and in Chapter 13.2 of \cite{bouchaud_bonart_donier_gould_2018}. These regression based estimation methods are already a common practice in the industry, however they ignore numerous mathematical issues, such as the illposedness of the least-squares estimation problem,
%and ignoring statistical issues such as 
dependencies between price trajectories and spurious correlations between the estimator and greedy strategies. Hence the convergence of these price impact estimators remains unproved and rigorous results on the convergence rate are considered as long-standing open problem. 

In \cite{neuman2023statistical} a novel approach for nonparametric estimation of the price impact kernel was proposed for the continuous time version of \eqref{p-proc}. Sharp bounds on the convergence rate which are characterised by the singularity of the propagator were derived therein. The estimation phase in \cite{neuman2023statistical} was designed for an online reinforcement learning framework where the agent interacts with the environment while trading. However, there are a few crucial drawbacks for the estimation methods developed in \cite{neuman2023statistical}: the trader's strategy had to be similar and deterministic in all interactions with the market, and moreover the price trajectories in each episode were assumed to be independent. These assumptions do not apply in crowded markets for example, where all agents are following similar trading signals (see \cite{N-V-2021}). Lastly, but most importantly, online learning for portfolio liquidation is very expensive. In order to get one sample path of $(S,u,A)$ in \eqref{p-proc} the trader needs to execute an order according to a highly suboptimal strategy, which leads to undesirable liquidation costs. For this reason most propagator calibrations are done offline, while fine tuning of the estimator should take place using online algorithms. 

In order to resolve the above issues, the goals of this work are as follows:
%In order to resolve the above mentioned issues, we make the fol
\begin{itemize}
    \item[\textbf{(i)}] to provide an offline estimator for the propagator $G$ in \eqref{p-proc} based on a static dataset of historical trades and prices and to derive optimal convergence  rate for this estimator. 
    \item[\textbf{(ii)}] to propose a stochastic control framework that will eliminate the correlation between the offline estimator $\hat G$ and a greedy trading strategy based on $\hat G$, and to derive a tight bound on its performance gap. 
    \end{itemize}
We will first provide details on objective (i).  We choose to work in a discrete time setting, which is compatible with financial datasets and is inline with  common practice in the financial industry. Our approach for the propagator estimation is offline since most of the existing data available to researchers is historical. Further, as mentioned before, online learning in order execution is expensive due to suboptimal interaction with the market using greedy trading strategies. Hence, deriving estimates for the price impact based on existing datasets could save substantial trading costs. In the following, we assume that there exists a dataset $\mathcal{D}$ with $N$ price and signal trajectories, including the corresponding agents' metaorders following the dynamics in \eqref{p-proc},  
\be \label{data-b} 
\mathcal{D} =  \left\{(S^{(n)}, u^{(n)}, A^{(n)})
\mid     n=  1, \ldots, N\right\},
\ee
see Definition \ref{def:offline_data} for details. In Theorems \ref{thm:confidence_volterra} and \ref{thm:confidence_convolution}, we improve the convergence rate results of Theorem 2.10 of \cite{neuman2023statistical}. Our contribution to propagator estimations is as follows:
\begin{itemize} 
\item[\textbf{(i)}] We relax the assumption on independence between the price trajectories, by assuming that the noise process $\eps^{(n)}$ is conditionally sub-Gaussian (see Assumption \ref{assum:offline_data}).   
\item[\textbf{(ii)}] We allow for any price adaptive strategies $u^{(n)}$ in the dataset $\mathcal{D}$, in contrast to choosing only one deterministic strategy that will repeat itself (i.e., $u^{(n)} = u^{(m)}$ for all $0\leq m,n \leq N$) as in Theorem 2.10 of \cite{neuman2023statistical} . 
\item[\textbf{(iii)}] We allow for estimation of non-convolution propagators $G$ and we show that in the convolution case the convergence rate is substantially faster (see Remark \ref{rem-con-rate}). 
\item[\textbf{(iv)}] We present a conceptually new norm,  
\be \label{norm-v-n}
\|\hat G  - G\|_{V_{N,\lambda}} := \|(\hat G  - G)V_{N,\lambda}^{\frac{1}{2}}   \|,
\ee
which not only quantifies the estimation error in terms of the quality of the dataset, but will also have a crucial role in the pessimistic offline learning approach which we present in the following. Here $\hat{G}$ denotes the estimated propagator, and $V_{N,\lambda}$ is a matrix that measures the covariance between the $u^{(n)}$'s in $\mathcal D$ (see \eqref{eq:project_admissible}). 
\end{itemize}

Next we discuss objective (ii) of this work, to propose a stochastic control framework that will eliminate the correlation between a greedy strategy and the estimated price impact kernel. Precise estimation of the propagator is crucial in portfolio liquidation and portfolio selection problems in order minimise the trading costs. The following cost functional measures the expected costs of a trade of size $x_0$ executed within the time interval $[0,T]$  (see~\citep{cartea15book,GSS2,GSS,Lehalle-Neum18} among others):  
 \begin{equation} \label{cost} 
\begin{aligned}
%\label{per-func}
J(u;G) 
&=  \mathbb{E}\left[ \sum_{i=1}^{M}  \sum_{j=1 }^{i} G_{i,j}  u    _{t_j} u_{t_i}
+ \sum_{i=1}^M A_{t_i} u_{t_i}  \right],
\end{aligned} 
\end{equation}  
where $u=(u_{t_i})_{i=1}^M$ satisfies a fuel constraint  i.e., $\sum_{i=1}^N u_{t_i} = x_0$. The first term in the expectation on the right-hand side of \eqref{cost} represents the price impact costs and the second term represents the fundamental price of the trade.  

The two step procedure starting with offline estimation of the propagator $G$ and then using the estimator $\hat G$  in order to obtain a greedy strategy $\hat u$ (i.e., to optimize $J(u;\hat{G})$ over a set of admissible controls) introduces some statistical issues and challenges which fit into the framework of \emph{offline reinforcement learning} (offline RL). As mentioned before typical training schemes of online RL algorithms rely on interaction with the environment, where in the case of trading this interaction is expensive and in other areas such autonomous driving, or healthcare it is dangerous.  Recently, the area of offline RL has emerged as a promising candidate to overcome this barrier. Offline RL algorithms learn a policy from a static dataset which is collected before the agent executes any greedy policy. However, these algorithms may suffer from several pathological issues which classified as follows:
\begin{itemize} 
\item[\textbf{(i)}] Intrinsic uncertainty: the dataset possibly fails to cover the trajectory induced by the optimal policy, which however carries the essential information.  
\item[\textbf{(ii)}] Spurious correlation: the dataset possibly happens to cover greedy trajectories unrelated to the optimal policy, which by chance induces low execution costs. This leads to an estimation error of the true parameters, and to an underperforming greedy policy correlated with the estimated parameters. 
 \end{itemize} 

 These issues were outlined and partially quantified for Markov decision processes with unknown distribution of transition probabilities and rewards in \citep{chang2021mitigating, jin2021pessimism,NEURIPS2020_f7efa4f8, Li-Tang-21,Uehara-21}. We demonstrate how these issues emerge in offline propagator estimation. In the spirit of Section 3.1 of \cite{jin2021pessimism}, we decompose the suboptimality of any strategy into the following ingredients: the spurious correlation, intrinsic uncertainty and optimization error.  We denote by $( G^{\star}, u^{\star},J)$ the true propagator, the optimal strategy and the associated cost functional $J$ in \eqref{cost}, respectively. We use the notation $(\hat G, \hat u, \hat J)$ for the estimator of the true kernel $G^\star$, a convex cost functional $\hat J(\cdot;\hat G)$  using $\hat G$, and a greedy strategy $\hat u$ minimising this functional. 
Note that we do not insist that $\hat J $ and $J$ necessarily agree, that is we allow $ \hat J(\cdot,  G) \not=  J(\cdot,  G)$ for some or all $G$'s.  

We further define the suboptimality of an arbitrary algorithm that aims to estimate $G^{\star}$ and also produces a %greedy 
strategy $\hat u$ as follows:
$$
\textrm{SubOpt} = J(\hat u; G^{\star}) - J( u^{\star}; G^{\star}). 
$$
We decompose the suboptimality of such algorithm into the following components  along the same lines as Lemma 3.1 of \cite{jin2021pessimism}, 
\be  \label{sub-decom} 
\begin{aligned}
\textrm{SubOpt} &=\underbrace{J(\hat u; G^{\star}) - \hat J(  \hat u ; \hat G)}_\text{(i): Spurious Correlation} + \underbrace{\hat J( u^{\star}; \hat G) - J( u^{\star}; G^{\star})}_\text{(ii): Intrinsic Uncertainty}  +\underbrace{\hat J(  \hat u ; \hat G) -\hat J(  u^\star ;  \hat G)}_\text{(iii): Optimization Error}. 
\end{aligned}
\ee 

Note that term (i) in \eqref{sub-decom} is the most challenging to control, as $\hat u$, $\hat G$ and hence $\hat J$ simultaneously depend on the dataset $\mathcal{D}$. This leads to a spurious correlation between them. In Example \ref{exmp-corr} of Appendix \ref{sec-examples}, we show that such spurious correlation can lead to a greedy strategy $\hat u$, which is substantially suboptimal in a specific tractable case. More involved examples are given in Section \ref{sec-numerics}. We refer to Section 3.2 of \cite{jin2021pessimism} for an additional negative example for multi-armed bandit problems. 

In particular, due to the correlation between $\hat u$ and $\hat G$, term (i) can have a large expectation, with respect to the probability measure induced by the dataset, even under the assumption that  $\hat G$ is an unbiased estimator of $G^{\star}$.
In contrast, term (ii) is less challenging to control, as $u^\star$ is the minimiser of $J(\cdot;  G^{\star})$, hence it does not depend on $\mathcal D$ and in particular on $\hat J(\cdot; \hat G)$. In Section 4.3 of  \cite{jin2021pessimism}, it was shown that the intrinsic uncertainty is impossible to eliminate for linear Markov decision processes. Finally we notice that the optimization error in term (iii) is nonpositive as long as $\hat u$ is greedy (i.e., optimal) with respect to $\hat J(\cdot; \hat G)$. 

Note that even with the tools developed in this paper, the estimation of price impact can be quite sensitive to various conditions in the market, which are sometimes difficult to quantify precisely. As pointed out before such deviations in the estimations of the true  propagator will lead to undesirable costs due to spurious correlation (see Example \ref{exmp-corr}). For example in \citep{Madhavan2001,Mich-Neum20}  permanent and temporary price impact on the Russell 3000 annual additions and deletions events were computed. 
It was found that after 2008, the temporary market impact often presented a negative sign, presumably amenable to the 2010s bull market which signed a positive trend in the equity stock market \citep{wsj_2019,ft_2019,reuters_2019}.  While a regression model can be applied in order to fully test the effect of market trends on price impact, both the reference market index S$\&$P500 and  individual stock returns present significant autocorrelations across different lags \cite{EGADEESH1990}, hence this type of analysis is quite involved. 

In order to minimise the ingredients of the suboptimality in \eqref{sub-decom} we adopt a \emph{pessimistic offline RL} framework, which penalises trading strategies  visiting states or using actions, which were not explored in the static dataset.  We define the uncertainty quantifier which is a key concept in this theory. In the following we denote by $\mathbb P' $ the probability measure governing the dataset (see Section \eqref{sec-pessim} for the precise definition). 
We refer to $\mathcal A$ as the class of admissible controls with respect to the cost functional $J$ in \eqref{cost}, where a precise definition is given in \eqref{admiss}. 

\begin{definition} [$\dl$-uncertainty quantifier] \label{def-quant} We say that $\Gamma:\mathcal A \rr \mathbb{R}_+$ is a $\dl$-uncertainty quantifier with respect to $\mathbb{P}'$ if the following event 
$$
A = \left \{| J( u; G^{\star}) - J(   u ; \hat G)| \leq \Gamma(u), \textrm{ for all } u\in \mathcal A \right \},
$$
satisfies $\mathbb{P}' (A) \geq 1-\dl$ for $\delta \in (0,1)$.
\end{definition} 

Assuming that we have $\dl$-uncertainty quantifier $\Gamma$ we now propose a candidate for $\hat J$ so that the spurious correlation in \eqref{sub-decom} is nonpositive, 
\be \label{j-h-def}
\hat J (u ; \hat G) =  J (u ; \hat G) + \Gamma(u), \quad   u \in \mathcal A. 
\ee
We also require that $\Gamma(u)$ is convex so that the cost functional \eqref{j-h-def} is convex. Indeed by Definition \ref{def-quant} and \eqref{j-h-def}, the spurious correlation is now bounded by 
\be \label{spurious-neg}
J(\hat u; G^{\star}) - \hat J(  \hat u ; \hat G) = J(\hat u; G^{\star}) - J (\hat u ; \hat G) - \Gamma(\hat{u})\leq 0,
\ee
with probability $1-\delta$ for $\delta \in (0,1)$. 

Using $\hat J$ as in \eqref{j-h-def} concludes that suboptimality in \eqref{sub-decom} only corresponds to term (ii) which characterizes the intrinsic uncertainty. Our definitions and main results in Sections \ref{sec-pessim} and \ref{sec-res-pess} propose such a $\dl$-uncertainty quantifier $\Gamma$ which is sufficiently efficient and helps us to establish a tight upper bound for the suboptimality in \eqref{sub-decom}. It is important to notice that deriving $\Gamma$ as in \eqref{def-quant} is highly nontrivial since the true propagator $G^{\star}$ is not an observable. In order to derive an uncertainty quantifier we use the bounds, which were developed in the estimation part of the paper, on the distance between the estimator $\hat G$ and $G^{\star}$ in \eqref{norm-v-n}, under the norm $\|\cdot\|_{V_{N,\lambda}}$. Specifically, in \eqref{j-p1} we choose $\hat J(\cdot ;\hat G) = J_{P,1}(\cdot ; G_{N,\lam})$ and in Corollary \ref{cor-quant} we show that the $\dl$-uncertainty quantifier is given by,  
\be \label{ell-1-def} 
\Gamma(u) = \E[\ell_1(V_{N,\lambda},u)],  \quad \textrm{where }  \ \ell_1(V_{N,\lambda},u) \coloneqq  C(N)
   \big\| V_{N,\lambda}^{-\frac{1}{2}} u  \big\|, 
\ee
for some constant $C(N) >0$.
The optimiser of $\hat J(\cdot ;\hat G)$ is referred to as a pessimistic optimal strategy and it is denoted by $u^{(1)}$. 
In Example \ref{exmp-l1-pen} we demonstrate the effect of $\ell_1$ on the pessimistic optimal strategy in a simple and tractable setup. More realistic examples are given in Section \ref{sec-numerics}. This is the first application of pessimistic offline RL in the area of quantitative finance and also in continuous state stochastic control. In the reinforcement learning literature, mathematical results that quantify these issues of offline RL are scarce, and they focus on Markov decision processes (MDPs). In the following we summarise our contribution in this direction:  
\begin{itemize} 
\item[\textbf{(i)}] In Theorem \ref{THM:performance_bound}(i) we prove that the spurious correlation of $u^{(1)}$ is nonpositive and that the suboptimality of $u^{(1)}$, which is essentially due to the intrinsic uncertainty, is bounded by $$    \textrm{SubOpt}  =  J( u^{(1)};   G^{\star}) - J( u; G^{\star}) \leq 2 \mathbb{E} \left[  \ell_1(V_{N,\lambda},u) \right],$$
for any admissible strategy $u$ including $u^{\star}$ which is unknown.     
\item[\textbf{(ii)}] In Theorem  \ref{thm-l-bound}(i), we prove that  
$\E[\ell_1(V_{N,\lambda},u)] = O(N^{-1/2}(\log N)^{1/2})$, where $N$ corresponds to the the number of samples in the static dataset \eqref{data-b}, under the assumption of a well-explored dataset (see Assumption \ref{assum:concentration_inequ} and Remark \ref{rem-asump-mat}) . Together with \eqref{sub-decom} and \eqref{spurious-neg} this leads to the asymptotic behaviour of the performance gap, 
\be  \label{sub-res} 
 \textrm{SubOpt} =O(N^{-1/2}(\log N)^{1/2}),
\ee 
where the constants of this rate are given explicitly. 
Special treatment and slightly refined results are given for convolution kernels in Theorems \ref{THM:performance_bound}(ii) and \ref{thm-l-bound}(ii). In Remark \ref{rem-asump-mat}, we show that for a convolution type propagator the assumption of a well-explored dataset should hold with asymptotically high probability for large $N$, in contrast to a Volterra type propagator. 
\item[\textbf{(iii)}] Note that the expected convergence rate for a generic least-square estimator $\hat G$ to the true kernel $G$ is of order $O(N^{-1/2})$, hence the statement of Theorem \ref{thm-l-bound} is sharp up to a logarithmic factor. In fact the logarithmic correction in \eqref{sub-res} is a result of the estimation scheme of $\hat G$, as described in \eqref{eq:G_n_lambda_volterra}, which is subject to quality of coverage of the dataset (see \eqref{eq:G_unconstraint_minimiser}). For a sufficiently regular dataset the regularization is not needed and the logarithmic term in Theorem \ref{thm:confidence_volterra} will vanish. This will give us asymptotically sharp bounds in Theorem \ref{thm-l-bound}(i), that is $\textrm{SubOpt} =O(N^{-1/2} )$.  
\end{itemize} 

We compare the contribution of this work to \cite{jin2021pessimism}, where partial results on pessimistic offline RL with respect to spurious correlation and intrinsic uncertainty were derived for MDPs. The bound on suboptimality in Theorem \ref{THM:performance_bound} coincides with the corresponding bound established in Theorem 4.2 of \cite{jin2021pessimism} that deals with a much simpler case of a class of MDPs where the rewards and distribution of the transition probabilities are Markovian. Note that stochastic control problems that involve propagators not only take place in a continuous state space but are also non-Markovian (see e.g., \cite{AJ-N-2022}). The convergence rate of $O(N^{-1/2}\log N)$ in Theorem \ref{thm-l-bound} coincides with the convergence rate  established in Corollary 4.5 of \cite{jin2021pessimism} for \emph{linear} MDP under similar assumptions.  

\paragraph{Structure of the paper:} 
In Section \ref{sec-l-p} we setup the trading model and describe our main results on propagator estimation in Theorems \ref{thm:confidence_volterra} and \ref{thm:confidence_convolution}, and on the performance gap for pessimistic learning in Theorems \ref{THM:performance_bound} and \ref{thm-l-bound}. Section \ref{sec-numerics} is dedicated to numerical illustrations of the propagator estimation and of pessimistic learning strategies. In Section \ref{sec-mart}, we develop the mathematical foundations for our results on convergence of the propagator estimators. Sections \ref{sec-pf-est}--\ref{sec-pf-opt} are dedicated to the proofs of the main results. In Appendix \ref{sec-examples} we provide some simple and tractable examples for the framework of pessimistic  RL in portfolio liquidation. In Appendix \ref{sec-examples_noisy} we provide further numerical experiments for estimation of Volterra kernels. 

  \section{Problem formulation and main results} \label{sec-l-p} 

In this section we present our results on the optimal liquidation model which was briefly described in \eqref{cost}, the price impact estimation with offline data and finally on the pessimistic control problem which corresponds to \eqref{j-h-def}. 
\subsection{The optimal liquidation model} \label{sec-liq-model}  
We fix $T>0$ along with an equidistant partition of the interval $[0,T]$, $\mathbb{T} \coloneqq \lbrace 0=t_1,\ldots,t_M=T \rbrace$. Let $(\Omega, \mathcal F, \mathbb F=\{\mathcal F_{t_i}\}_{i=1}^M, \mathbb{P})$ be a filtered probability space and let $\mathcal F_0$ be the null $\sigma$-field.  Let $(A_{t_i})_{i=1}^M$  and $(\eps_{t_i})_{i=1}^M$ be $\mathbb F$-adapted stochastic processes satisfying
$$
 \mathbb{E}[\eps_{t_i}^2]  < \infty, \quad  \mathbb{E}[A_{t_i}^2] < \infty, \quad \textrm{for all } i=1,\ldots,M. 
$$  
where we further assume that 
$$
\mathbb{E}[\eps_{t_i}\mid \mathcal{F}_{t_{i-1}}]=0, \quad \textrm{for all } i=1,\ldots,M. 
$$
We consider a trader with a target
position of $x_0 \in \mathbb{R}$  shares in a risky asset. The
number of shares the trader holds at time $t_n$ is prescribed as
$$
X_{t_n} =  \sum_{i=1}^{n} u_{t_i}, \quad n=1,\ldots,M, 
$$
where $u = (u_{t_1}, \ldots, u_{t_M})^\top$ denotes her trading speed, choosen from a set of admissible strategies
\be \label{admiss} 
\mathcal A = \left\{ (u_{t_i})_{i=1}^M \textrm{ are } \mathbb F\textrm{-adapted and }  \mathbb{E}[u_{t_i}^2] < \infty,  \ \textrm{for all } i=1,\ldots,M \right\}. 
\ee
We will further impose the fuel constraint $\sum_{i=1}^{M} u_{t_i} = x_0$, hence $x_0$ can be regarded as the target inventory to be  %liquidated
{achieved}
by the terminal time. 
We fix an % lower triangular 
$M \times M$ matrix $G=(G_{i,j})_{i,j=1}^{M}$, which is called the propagator, such that
\be \label{g-def} 
\sum_{i=1}^{M}  \sum_{j=1 }^{M} (G_{i,j} +G_{j,i})  x_{j} x_{i} > 0, \quad \textrm{for all } x \in \mathbb{R}^M, \quad G_{i,j}= 0, \quad \textrm{for } 1 \leq i < j \leq M. 
\ee
We assume that the trader's trading activity causes transient price impact on the risky asset's execution price in the sense that her orders are filled at prices 
\begin{equation}\label{eq:price_dynamics_G_star}
S_{t_{i+1}} =  S_{t_1} + \sum_{j =1}^{i} G_{i,j}  u_{t_j} 
+A_{t_i}+  
\varepsilon_{t_i} ,
\quad i=1,\ldots, M,
\end{equation}
where $S_{t_1}$ is an $\mathcal F_{t_1}$-measurable, square integrable random variable, which describes the initial price.  Note that in this setup the process $A+\eps$ represents the fundamental (or unaffected) asset price, where $A$ is often referred to as a trading signal see e.g., Section 2 of \cite{NeumanVoss:20}.  

Following a similar argument as in Section 2 of \cite{Lehalle-Neum18} we consider the following performance functional, which measures the costs of the trader's strategy in the presence of transient price impact and a signal, 
\begin{equation} \label{per-func} 
\begin{aligned}
%\label{per-func}
J(u;G) 
&=  \mathbb{E}\left[ \sum_{i=1}^{M}  \sum_{j=1 }^{i} G_{i,j}  u_{t_j} u_{t_i}
+ \sum_{i=1}^M A_{t_i} u_{t_i}  \right],
\end{aligned} 
\end{equation} 
where $u=(u_{t_i})_{i=1}^M \in \mathcal A$ satisfies a fuel constraint. We therefore wish to solve the following cost minimization problem,  
\begin{equation} \label{opt-prog} 
   \begin{cases}
&\min_{u \in \mathcal A} J(u;G), \\
&\textrm{s.t.}  \sum_{i=1}^N u_{t_i} = x_0 .
  \end{cases} 
\end{equation}

In order to derive the optimiser to \eqref{opt-prog},  we introduce some additional definitions and notation.  
\paragraph{Notation} 
For $G$ as in \eqref{g-def}, we denote by $2\eta >0$ the smallest eigenvalue of the positive definite matrix $G +G^{\top} $ and define  
\be \label{g-dec} 
  G = \eta  \mathbb{I}_M + \wt G,
\ee 
so that $ \wt G$ satisfies 
\be \label{g-non-neg-def} 
\sum_{i=1}^{M}  \sum_{j=1 }^{M} (\wt G_{i,j} + \wt G_{j,i})  x_{j} x_{i} \geq 0, \quad \textrm{for all } x \in \mathbb{R}^M, \quad \wt G_{i,j}= 0, \quad \textrm{for } j>i. 
\ee

Here $\mathbb{I}_M$ is the $M \times M$ identity matrix, where we often omit the subscript $M$ when there is no ambiguity. We define the following $M$-vectors: $\boldsymbol{1} = (1,\ldots,1)^{\top}$ and $A=(A_{t_1},\ldots,A_{t_M})^\top$. Lastly we denote $ \mathbb{E}_{s} [\cdot] =  \mathbb{E}[ \cdot | \mathcal F_s]$ for any $s \in \mathbb{T}$. 
For $\wt G$ as in \eqref{g-dec}, let
\be \label{tile-g-i} 
\wt G^{(\ell)} := (\wt G^{(\ell)}_{i,j})  =  ( \wt G_{i,j} 1_{\{\ell \leq  i\}}), \quad \ell = 1, \ldots, M.  
\ee
We further define the matrices 
$$
D^{(\ell)} =  2\eta \mathbb I_M+\wt G^{(\ell)}    + 
(\wt G^{\top})^{(\ell)},
$$
and $K = (K_{i,j})_{i,j=1}^{M}$ with
\be \label{k-def-og} 
K_{i,j}=   \wt G_{i,j} -   \mathrm{1}_{\lbrace t_j <  t_i \rbrace} (\wt G^{\top}    (D^{(i)} )^{-1}\wt G^{(i)})_{i,j }.
\ee
 
The following theorem derives an explicit solution to the constrained optimal liquidation problem in \eqref{opt-prog}.
\begin{theorem} \label{prop-opt-strat} 
There exists a unique admissible strategy solving \eqref{opt-prog} which is given by 
$$
u_{t_i} = \sum_{j\leq i}(2\eta  \mathbb{I}_M +  K)^{-1}_{i,j}\left(g_{t_j}  +a_{t_j}  \mathbb{E}_{t_j }[\lambda]\right)    , \quad i=1,\ldots,M,
$$
where 
\be
\begin{aligned}
a_{t_j} &=   -1+\sum_{k=1}^M  ( \wt G^{\top} (D^{(j)} )^{-1})_{j,k}1_{\{t_j \leq   t_k\}}, \\
g_{t_j} &=  -A_{t_j}+ \sum_{k=1}^M  ( \wt G^{\top} (D^{(j)})^{-1})_{j,k}  1_{\{t_j \leq  t_k\}}   \mathbb{E}_{t_j}[ A_{t_k}], 
\end{aligned} 
\ee
and, for $r = 1, \ldots, M-1$, 
$$
\mathbb{E}_{t_{r+1}}[\lam]= \frac{(b_r-h_r)\E_{t_{r}}[\lam]- (c_{r+1}-c_r)}{b_{r+1}} ,  \quad   \mathbb{E}_{t_1}[\lam] = \frac{x_0-c_1}{b_1}, 
$$
with 
\begin{align*} 
& \wt A_{t_i} = ((2\eta  \mathbb{I}_M +  K)^{-1}  g)_i, \quad 
c_r =   \sum_{i=1}^M \E_{t_r} [\wt A_{t_i}] ,   \\
& b_{r}  =\sum_{i=1}^M \sum_{r  \leq j \leq M} (2\eta  \mathbb{I}_M +  K)^{-1}_{i,j} a_{t_j}, \quad  h_r = \sum_{i=1}^M(2\eta  \mathbb{I}_M +  K)^{-1}_{i,r} a_{t_r}. 
\end{align*} 
 
\end{theorem}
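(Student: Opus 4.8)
The plan is to treat \eqref{opt-prog} as a strictly convex quadratic program over the Hilbert space of square-integrable adapted controls, derive its Euler--Lagrange conditions with a random multiplier for the fuel constraint, and then solve the resulting coupled forward--backward system explicitly by backward induction. First I would rewrite the cost: since $G$ is lower triangular, $\sum_i\sum_{j\le i}G_{i,j}u_{t_j}u_{t_i}=u^\top G u=\tfrac12 u^\top(G+G^\top)u$, so $J(u;G)=\mathbb{E}[\tfrac12 u^\top(G+G^\top)u+A^\top u]$. By \eqref{g-def} the matrix $G+G^\top$ is positive definite with smallest eigenvalue $2\eta$, whence $u\mapsto J(u;G)$ is strictly convex and coercive on $\mathcal A$, and $\{u\in\mathcal A:\mathbf 1^\top u=x_0\}$ is a nonempty closed convex subset of the Hilbert space $\mathcal A$. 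Standard convex analysis then yields existence of a unique minimiser, settling the uniqueness assertion; it remains to identify it.

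Next I would characterise the minimiser variationally. Feasible perturbations are adapted $v$ with $\mathbf 1^\top v=0$, and optimality reads $\mathbb{E}[v^\top((G+G^\top)u+A)]=0$ for all such $v$. Letting $v$ range over free $\mathcal F_{t_i}$-measurable increments for $i<M$ with $v_{t_M}=-\sum_{i<M}v_{t_i}$ and using the tower property, this is equivalent to the existence of an $\mathcal F_{t_M}$-measurable random variable $\lambda$ with
$$\mathbb{E}_{t_i}\big[((G+G^\top)u)_i+A_{t_i}\big]=\mathbb{E}_{t_i}[\lambda],\qquad i=1,\dots,M.$$
Writing $G=\eta\mathbb I_M+\wt G$ as in \eqref{g-dec}, the component $((G+G^\top)u)_i=2\eta u_{t_i}+(\wt G u)_i+(\wt G^\top u)_i$ couples $u_{t_i}$ both to past controls (through $\wt G$) and to conditional expectations of future controls (through $\wt G^\top$), so this is a genuine forward--backward system.

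I would then solve this system by backward induction from $i=M$ down to $i=1$. At step $i$, the terms $\wt G_{i,j}u_{t_j}$ with $j<i$ are $\mathcal F_{t_i}$-measurable while the future terms must be replaced by their $\mathcal F_{t_i}$-conditional expectations; eliminating the block $(u_{t_j})_{j\ge i}$ in terms of $(\mathbb{E}_{t_j}[\lambda],\mathbb{E}_{t_j}[A])$ amounts to a sequence of Schur-complement reductions, in which the matrices $D^{(\ell)}=2\eta\mathbb I_M+\wt G^{(\ell)}+(\wt G^\top)^{(\ell)}$ are the blocks being inverted and $K$ of \eqref{k-def-og} is the effective kernel that survives. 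This produces the affine representation $u_{t_i}=\sum_{j\le i}(2\eta\mathbb I_M+K)^{-1}_{i,j}(g_{t_j}+a_{t_j}\mathbb{E}_{t_j}[\lambda])$ with $a,g$ as stated, where $a$ is deterministic and $g$ is $\mathbb F$-adapted. Verifying that the induction reproduces precisely these $D^{(\ell)},K,a,g$ — rather than merely some closed form — is the main obstacle and carries the bulk of the algebra.

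Finally I would pin down $\lambda$ from the fuel constraint. Since $(2\eta\mathbb I_M+K)$ is lower triangular, substituting the representation into $\sum_i u_{t_i}=x_0$ gives $x_0=\sum_{i}\wt A_{t_i}+\sum_{j}\beta_j a_{t_j}\mathbb{E}_{t_j}[\lambda]$ with $\beta_j:=\sum_{i\ge j}(2\eta\mathbb I_M+K)^{-1}_{i,j}$, so that $b_r=\sum_{j\ge r}\beta_j a_{t_j}$ and $h_r=\beta_r a_{t_r}=b_r-b_{r+1}$. Applying $\mathbb{E}_{t_r}[\cdot]$ and invoking the martingale identity $\mathbb{E}_{t_r}[\mathbb{E}_{t_j}[\lambda]]=\mathbb{E}_{t_{\min(r,j)}}[\lambda]$ turns this into $x_0=c_r+\sum_{j<r}\beta_j a_{t_j}\mathbb{E}_{t_j}[\lambda]+b_r\mathbb{E}_{t_r}[\lambda]$ with $c_r=\sum_i\mathbb{E}_{t_r}[\wt A_{t_i}]$; differencing the identities at $t_{r+1}$ and $t_r$ yields the stated recursion for $\mathbb{E}_{t_{r+1}}[\lambda]$, and the case $r=1$ gives the initial condition $\mathbb{E}_{t_1}[\lambda]=(x_0-c_1)/b_1$. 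A concluding check that the resulting $u$ is square-integrable, adapted, and satisfies $\sum_i u_{t_i}=x_0$ confirms admissibility and closes the argument.
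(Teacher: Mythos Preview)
Your proposal is correct and follows essentially the same architecture as the paper's proof: strict convexity for uniqueness, a Lagrangian with a random multiplier $\lambda$ for the fuel constraint, componentwise first-order conditions projected onto $\mathcal F_{t_i}$, elimination of the future conditional expectations to obtain the lower-triangular system $(2\eta\mathbb I_M+K)u=g+a^\top\mathbb I_M\,\mathbb E_\cdot[\lambda]$, and finally the recursion for $\mathbb E_{t_r}[\lambda]$ by differencing the conditioned constraint.

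The one point where your description diverges from the paper is the mechanism for the middle step. You frame it as backward induction with Schur complements, eliminating $(u_{t_j})_{j\ge i}$ step by step. The paper instead, for each fixed $i$, applies $\mathbb E_{t_i}$ to \emph{all} equations $k\ge i$ simultaneously, obtaining a single linear system $D^{(i)} m_{t_i}=-f_{t_i}$ for the vector $m_{t_i}(t_k)=1_{\{t_i\le t_k\}}\mathbb E_{t_i}[u_{t_k}]$; inverting $D^{(i)}$ in one shot and substituting back into the $i$-th first-order condition is what produces $K$, $a$, $g$ directly without any induction. This is precisely the ``bulk of the algebra'' you flag as the main obstacle, and the paper's non-inductive route makes that algebra considerably cleaner than a genuine Schur-complement cascade would. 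Your treatment of the Lagrange-multiplier recursion (via $\beta_j$, $b_r$, $h_r=b_r-b_{r+1}$, and differencing) is exactly the paper's.
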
 
The proof of Theorem \ref{prop-opt-strat} is given in Section \ref{sec-pf-opt}. 
 
\begin{remark} \label{rem-sol-no-sig} 
In the case where the signal $A\equiv0$ the execution problem reduces to the deterministic optimization problem which was presented in Proposition 1 of  Alfonsi et al. \cite{alf-sch12}. The solution to the problem in this special case is a straightforward generalization of equation (5) therein and it is given by 
 \begin{align*}
    u^{\star} =  \frac{x_0}{\boldsymbol{1}^{\top}(G + G^{\top} )^{-1} \boldsymbol{1}}(G + G ^{\top})^{-1}\boldsymbol{1},
\end{align*}
which is well-defined since $G  + G^{\top}$ is symmetric positive definite hence  is invertible. 
 \end{remark}

\begin{remark} 
The proof of Theorem \ref{prop-opt-strat} generalises the methods that were recently developed in \citep{AJ-N-2022,ANV-23} in order to tackle such non-Markovian problems as \eqref{opt-prog} in two directions. First, we provide a framework for solving problems with constraints by introducing a stochastic Lagrange multiplier. Moreover we manage to solve the non-regularized problem, that is, without the linear temporary price impact term $\lam u_{t_i}$ in the right-hand side of  \eqref{eq:price_dynamics_G_star} (compare with equation (2.4) in \cite{AJ-N-2022}). This allows us to study the propagator model in its original form as introduced in Section 3.1 of \cite{bouchaud-gefen}.    
\end{remark} 
\subsection{Main results on price impact estimation with offline data}  \label{sec-data} 
 In this section, we provide an offline estimator for the propagator $G$ in \eqref{eq:price_dynamics_G_star} based on a static dataset of historical trades and prices and  derive optimal convergence rates for this estimator. In order to tackle the estimation problem, we assume that the agent has access to an offline dataset, which consists of price trajectories and metaorders across several episodes.
The precise properties of the  dataset are given below. 
\begin{definition}[Offline dataset]\label{def:offline_data}
    Let  $N\in \mathbb{N}$ be  the number of episodes in the dataset and 
    consider  
$$\mathcal{D} =  \left\{(S^{(n)}_{t_i})_{i=1}^{M+1}, (u^{(n)}_{t_i})_{i=1}^{M}, (A^{(n)}_{t_i})_{i=1}^{M}
\mid     n=  1, \ldots, N\right\},
$$
where $S^{(n)} \coloneqq (S^{(n)}_{t_i})_{i=1}^{M+1}$, $u^{(n)}  \coloneqq (u^{(n)}_{t_i})_{i=1}^{M}$
and $A^{(n)} \coloneqq (A^{(n)}_{t_i})_{i=1}^{M}$ are price trajectories, trading strategies and   trading signals available in   the $n$-th episode, respectively.  

We call $\mathcal{D}$   an offline dataset for  \eqref{eq:price_dynamics_G_star}
if   $(S^{(n)}, u^{(n)}, A^{(n)})_{n=1}^N$ are realisations  of random variables 
  defined on a probability space $(\Omega',\mathcal{F}',\mathbb{P}') $
  satisfying the following properties:
for each $n\in \mathbb{N}$, 
$u^{(n)}$ is measurable with respect  to the $\sigma$-algebra $\mathcal{G}_{n-1}$  
where 
$$
\mathcal{G}_{n-1}=\sigma\left\{(S^{(k)})_{k=1}^{n-1},(u^{(k)})_{k=1}^{n-1}, (A^{(k)})_{k=1}^n\right\},
$$
and 
there exist $\mathcal G_n$-measurable random variables
$\eps^{(n)}=(\varepsilon^{(n)}_{t_i})_{  i=1}^{M}$ such that, 
\begin{equation}
\label{eq:data_price_n}
S^{(n)}_{t_{i+1}} -S^{(n)}_{t_1} =  \sum_{j =1}^{i} G^\star_{i,j}  u^{(n)}_{t_j} + A^{(n)}_{t_i}+  \varepsilon^{(n)}_{t_i} ,
\quad i=1,\ldots, M,    
\end{equation}
where $G^\star $ is the true (unknown) propagator and
% , and $(\varepsilon^n_{t_i})_{  i=1,\ldots, H, n=1,\ldots, N}$
% are  unobserved   noises.
$\mathbb{E}^{\sP'}[\eps_{t_i}^{(n)}\mid \mathcal{G}_{n-1}]=0$ for all $i=1,...,M$. 
 \end{definition}

\begin{remark}
\label{rmk:data_set}
Definition \ref{def:offline_data} accommodates dependent trading strategies $(u^{(n)})_{n=1}^N$ that may not optimize the control objective \eqref{per-func}. This setting is relevant for various practical scenarios such as crowded markets where agents with different objective functionals follow similar signals (see \citep{C-N-M-2023, Mich-Neum-MK,N-V-2021}). Moreover, these strategies may be generated adaptively in the data collecting process, meaning that the trader has incorporated historical observations and currently observed signals in order to design the trading strategy $u^{(n)}$ for the $n$-th episode.
\end{remark}

\paragraph{Convention.} For any $v, u \in \mathbb R^M$ we denote by $\langle v,u \rangle$ the inner product in $\mathbb R^M$ and by  $ \|v\|$ the Euclidean norm. For any matrix $B = (B_{i,j})^{M}_{i,j =1} $ we denote by $\|B\|_{\mathbb{R}^{M\times M}}  $ its  
Frobenius norm, i.e., 
\be \label{frob}
\|B\|_{\mathbb{R}^{M\times M}} = \sqrt{\sum_{i=1}^M\sum_{j=1}^M |B_{i,j}|^2} = \sqrt{\tr (B^\top B)},
\ee
where $\tr(B)$ represents the trace of $B$. We further define the Frobenius inner product,
\be \label{frob-in}
\langle A,B\rangle_{\mathbb{R}^{M\times M} } =\tr (A^\top B), \quad A,B \in \mathbb{R}^{M\times M}. 
\ee
We define the following class of admissible propagators. 
    The true propagator $G^\star \in \mathbb{R}^{M\times M}$ 
    % in   \eqref{eq:price_dynamics_G_star}
is assumed to be in the following set  with some  known constant $\kappa>0$:
\be  \label{assum:volterra_kernel}
\mathscr{G}_{\textrm{ad}} \coloneqq 
\left\{ 
G=(G_{i,j})_{i,j=1}^M
\,\middle\vert\, 
\begin{aligned}
& 
\textnormal{$x^\top (G+G^\top)x \ge  \kappa \|x\|^2 $  for all $x\in \sR^M$,}
\\
&
\textnormal{$G_{i,j}\ge 0$  for all $ i, j = 1, \ldots, M$,}
\\
&
\textnormal{$G_{i,j}=0$  for all $1\le i< j\le M$.}
 \end{aligned}
\right\}.
 \ee

\begin{remark}
Note that the conditions in $\mathscr{G}_{\textrm{ad}}$ agree with the assumptions on the propagator which  were made in \eqref{g-def}. Indeed the constant $\kappa$ coincides with $2\eta$ in \eqref{g-dec} and it can be regarded as a lower bound on the temporary price impact. The  
lower diagonal structure of $G$ ensures non-anticipative structure (see \eqref{eq:data_price_n}). The fact that the entries of $G$ are nonnegative ensures that a sell (buy) transaction, which causes a negative (positive) change in the inventory, will push the price downwards (upwards) in \eqref{eq:data_price_n}.

\end{remark}

\begin{remark}
We present a couple of typical examples for price impact Volterra kernels, whose projection on a finite grid belongs to $\mathscr{G}_{\textrm{ad}}$. More examples for convolution kernels are discussed in Remark \ref{rem-examples}. 
 The following non-convolution kernel was introduced in order to model price impact in bond trading (see Section 3.1 of  \cite{brigo2020}): 
 $$
G(t,s) = f(T-t) K(t-s)  \mathrm{1}_{\{s<t\}},
$$
where $K:\mathbb{R}_{+} \to \mathbb{R}_+$ is a usual nonnegative definite decay kernel and $f$ is a bounded function satisfying $f(0)=0$, due to the terminal condition on the bond price.  
In another example which was proposed in \cite{FruthSchoenebornUrusov} for modelling order books with time-varying liquidity, the propagator is given by $G(t,s) = L(t) \exp \left(-\int_s^t \rho(u) du \right)$, where $L,\rho$ are bounded strictly positive functions. \end{remark}

In the following assumption we classify the noise in \eqref{eq:data_price_n} as conditionally sub-Gaussian. 
\begin{assumption} \label{assum:offline_data}
The agent  has an offline dataset $\mathcal{D}$   
of size $N\in \mathbb{N}$ as in Definition \ref{def:offline_data} and
 there exists a known constant $R>0$
 such that for all $n=1,\ldots, N$,
 $$
 \mathbb{E}^{\sP'}\left[\exp\left({\langle v, \eps^{(n)} \rangle}\right)\mid \mathcal{G}_{n-1} \right]
\le \exp\left( \frac{R^2\|v\|^2}{2}\right),
\quad \textrm{for all } v\in \mathbb{R}^M.
$$
That is, 
   $\eps^{(n)}$ in \eqref{eq:data_price_n} is $R$-conditionally sub-Gaussian with respect to $\mathcal{G}_{n-1}$.
\end{assumption}

Based on the dataset $\mathcal{D}$, 
 the unknown price impact coefficients $(G^\star_{i,j})_{1\le j\le i\le M}$ 
 can be estimated via 
   a least-squares method. Note that for any $n\in \mathbb{N}$,
the observed data $y^{(n)} = (S^{(n)}_{t_{i+1}}-S^{(n)}_{t_1}-A^{(n)}_{t_i})_{i=1}^{M}\in \mathbb{R}^M$ and 
$u^{(n)}= (u^{(n)}_{t_i})_{i=1}^{M}\in \mathbb{R}^M$
satisfy
\begin{equation}
\label{eq:price_impact_linear_regression}
y^{(n)} =G^\star u^{(n)} +\eps^{(n)} , 
\quad \textnormal{with $\eps^{(n)}= (\eps^{(n)}_{t_i})_{i=1}^{M}$.}
\end{equation}
This motivates us to estimate $G^\star$ by minimising 
the following 
   quadratic loss over all admissible price impact coefficients:   
\begin{equation} \label{eq:G_n_lambda_volterra}
G_{N,\lambda}\coloneqq\argmin_{G \in \mathscr{G}_{\textrm{ad}}}
\left(
\sum_{n=1}^N \|y^{(n)}-G u^{(n)}\|^2+\lambda\|G\|_{\mathbb{R}^{M \times M}}^2
\right),
\end{equation}
where 
 $\lambda>0$
 is a given regularisation parameter and 
$\mathscr{G}_{\textrm{ad}} $ 
is given in \eqref{assum:volterra_kernel}.

\begin{remark} 
The regularisation parameter  $\lambda>0$ ensures that the loss function \eqref{eq:G_n_lambda_volterra} is strongly convex in $G$. This along with  the fact that $\mathscr{G}_{\textrm{ad}}$ is nonempty, closed and convex implies that 
${G}_{N,\lambda}$ is uniquely defined. The necessity of such regularisation can be clearly seen later in \eqref{eq:G_unconstraint_minimiser}, as it ensures the invertibility of the matrix on the right-hand side of the equality. The need for this regularisation is often ignored in the propagator estimation literature (see e.g., Section 13 of \cite{bouchaud_bonart_donier_gould_2018}). 
\end{remark}   
The estimator $G_{N,\lambda}$ 
  can be computed by projecting the  unconstrained least-squares estimator onto the   set $\mathscr{G}_{\textrm{ad}}$.
Indeed, if one sets   
\begin{align}
\label{eq:G_unconstraints}
\begin{split}
\tilde{G}_{N,\lambda}
&\coloneqq\argmin_{G \in \mathbb{R}^{M \times M}}
\left(
\sum_{n=1}^N \|y^{(n)}-G u^{(n)}\|^2+\lambda\|G\|_{\mathbb{R}^{M \times M}}^2
\right), 
\end{split}
\end{align}
then 
\begin{equation} \label{eq:project_admissible}
G_{N,\lambda}
=\argmin_{G \in \mathscr{G}_{\textrm{ad}}}\left\|\left(G-\tilde{G}_{N,\lambda}\right) V_{N,\lambda}^
{\frac{1}{2}}\right\|^2_{\mathbb{R}^{M \times M}},
\quad \textnormal{with
$V_{N,\lambda}=\sum_{n=1}^N u^{(n)} (u^{(n)})^{\top } +\lambda \mathbb{I}_{M }$}, 
\end{equation}
where we recall that $\mathbb{I}_{M}$ denotes the $M \times M$ identity matrix. 
The unconstrained problem \eqref{eq:G_unconstraints}
can be solved explicitly as
\begin{equation}
 \label{eq:G_unconstraint_minimiser}  
 \tilde{G}_{N,\lambda}= \sum_{n=1}^N y^{(n)} (u^{(n)})^{\top} V_{N,\lambda}^{-1},
\end{equation}
while the  optimisation problem  \eqref{eq:project_admissible} can be    efficiently solved by  standard  convex optimisation packages (see e.g., \cite{diamond2016cvxpy}).

The following theorem provides a confidence region of the estimated  coefficient ${G}_{N,\lambda}$ in terms of the observed data. 

\begin{theorem}\label{thm:confidence_volterra}
Assume that $G^\star \in \mathscr G_{ad}$ and that Assumption \ref{assum:offline_data}  holds.
    For any $\lambda>0$, let 
    ${G}_{N,\lambda}$ and $V_{N,\lambda}$ be defined in 
    \eqref{eq:G_n_lambda_volterra}.
    Then for all $\lambda>0$ and $\delta \in (0,1)$,
    with $\sP'$-probability at least $1-\delta$,
    \be \label{bnd-err-1} 
\begin{aligned}
    %\label{eq:G_error}
&
\left\|({G}_{N,\lambda}  - G^\star)V_{N,\lambda}^{\frac{1}{2}}   \right\|_{\mathbb{R}^{M \times M}}
\le  R
\left(
\log\left(
\frac{\lambda^{-M^2} (\det(V_{N,\lambda} ))^{M}}{\delta^2}
\right)\right)^{\frac{1}{2}}
+\lambda \left \|    G^\star
V_{N,\lambda}^{-\frac{1}{2}}
    \right\|_{\mathbb{R}^{M \times M}}. 
\end{aligned}
\ee
 \end{theorem}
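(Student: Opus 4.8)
The plan is to prove the bound first for the \emph{unconstrained} estimator $\tilde G_{N,\lambda}$ of \eqref{eq:G_unconstraint_minimiser} and then transfer it to $G_{N,\lambda}$ by a projection argument. Writing $S_N \coloneqq \sum_{n=1}^N \eps^{(n)}(u^{(n)})^\top$ and substituting $y^{(n)} = G^\star u^{(n)}+\eps^{(n)}$ from \eqref{eq:price_impact_linear_regression} into \eqref{eq:G_unconstraint_minimiser}, I would use the identity $\sum_{n=1}^N u^{(n)}(u^{(n)})^\top = V_{N,\lambda}-\lambda\mathbb{I}_M$ to get the exact decomposition
\[
(\tilde G_{N,\lambda}-G^\star)V_{N,\lambda}^{1/2} = S_N V_{N,\lambda}^{-1/2} - \lambda\, G^\star V_{N,\lambda}^{-1/2}.
\]
Applying the triangle inequality for $\|\cdot\|_{\mathbb{R}^{M\times M}}$ then reduces the whole estimate to a high-probability bound on the self-normalised term $\|S_N V_{N,\lambda}^{-1/2}\|_{\mathbb{R}^{M\times M}}$, since $\lambda\|G^\star V_{N,\lambda}^{-1/2}\|_{\mathbb{R}^{M\times M}}$ is already the last summand in \eqref{bnd-err-1}.

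The core step is a self-normalised concentration inequality for the matrix-valued martingale $S_N$, which I would establish by the method of mixtures adapted to vector-valued sub-Gaussian noise. For a fixed $\Theta\in\mathbb{R}^{M\times M}$, set
\[
M_n(\Theta) = \exp\!\Big(\langle \Theta, S_n\rangle_{\mathbb{R}^{M\times M}} - \tfrac{R^2}{2}\tr\big(\Theta (V_{n,\lambda}-\lambda\mathbb{I}_M)\Theta^\top\big)\Big).
\]
Since $\langle \Theta, S_n\rangle_{\mathbb{R}^{M\times M}} = \sum_{k=1}^n \langle \Theta u^{(k)},\eps^{(k)}\rangle$ and $u^{(n)}$ is $\mathcal G_{n-1}$-measurable, Assumption \ref{assum:offline_data} applied with $v=\Theta u^{(n)}$ shows that $(M_n(\Theta))_n$ is a nonnegative supermartingale with $M_0(\Theta)=1$. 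I would then mix over $\Theta$ against the matrix-Gaussian density proportional to $\exp(-\tfrac{R^2\lambda}{2}\|\Theta\|^2_{\mathbb{R}^{M\times M}})$; the average $\bar M_n$ is again a nonnegative supermartingale, and because both the linear and the quadratic terms decouple across the rows of $\Theta$, the Gaussian integral factorises into $M$ identical one-row integrals. Completing the square in each with the matrix $R^2 V_{n,\lambda}$ and using $\|S_n V_{n,\lambda}^{-1/2}\|^2_{\mathbb{R}^{M\times M}} = \tr(S_n V_{n,\lambda}^{-1}S_n^\top)$ yields
\[
\bar M_n = \Big(\tfrac{\lambda^{M}}{\det V_{n,\lambda}}\Big)^{M/2}\exp\!\Big(\tfrac{1}{2R^2}\,\|S_n V_{n,\lambda}^{-1/2}\|^2_{\mathbb{R}^{M\times M}}\Big).
\]
Applying Ville's maximal inequality, $\sP'(\sup_n \bar M_n \ge 1/\delta)\le \delta$, and rearranging at time $N$ produces exactly $\|S_N V_{N,\lambda}^{-1/2}\|_{\mathbb{R}^{M\times M}}\le R(\log(\lambda^{-M^2}(\det V_{N,\lambda})^{M}/\delta^2))^{1/2}$ on an event of $\sP'$-probability at least $1-\delta$.

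Finally I would pass from $\tilde G_{N,\lambda}$ to $G_{N,\lambda}$. The bilinear form $\langle G,H\rangle_* \coloneqq \tr(G^\top H V_{N,\lambda})$ is an inner product on $\mathbb{R}^{M\times M}$ (as $V_{N,\lambda}\succ 0$) whose induced norm is $\|G V_{N,\lambda}^{1/2}\|_{\mathbb{R}^{M\times M}}$, and by \eqref{eq:project_admissible} the estimator $G_{N,\lambda}$ is precisely the $\langle\cdot,\cdot\rangle_*$-projection of $\tilde G_{N,\lambda}$ onto the closed convex set $\mathscr{G}_{\textrm{ad}}$. Since $G^\star\in\mathscr{G}_{\textrm{ad}}$, non-expansiveness of projections onto closed convex sets gives $\|(G_{N,\lambda}-G^\star)V_{N,\lambda}^{1/2}\|_{\mathbb{R}^{M\times M}}\le\|(\tilde G_{N,\lambda}-G^\star)V_{N,\lambda}^{1/2}\|_{\mathbb{R}^{M\times M}}$, and combining this with the previous two paragraphs yields \eqref{bnd-err-1}.

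I expect the main obstacle to be the self-normalised bound of the second paragraph: the noise is vector-valued and the estimator matrix-valued, rather than the scalar-noise, vector-parameter situation of the classical self-normalised tail inequality for linear regression. The decisive observation is that the correct mixing measure is a \emph{product over the rows} of the test matrix $\Theta$, which reduces the matrix problem to $M$ coupled scalar problems and is exactly what turns the usual $(\det V_{N,\lambda}/\lambda^{M})^{1/2}$ factor into $(\det V_{N,\lambda}/\lambda^{M})^{M/2}$; getting the powers of $\det V_{N,\lambda}$, $\lambda$ and $\delta$ in \eqref{bnd-err-1} right hinges on tracking the normalisation of this $M$-fold Gaussian integral. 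A secondary delicate point is verifying the supermartingale property under the \emph{conditional} sub-Gaussianity of Assumption \ref{assum:offline_data}, where the $\mathcal G_{n-1}$-measurability of $u^{(n)}$ from Definition \ref{def:offline_data} is essential.
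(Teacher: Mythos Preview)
Your proposal is correct and follows the same self-normalised/method-of-mixtures paradigm as the paper, but the execution differs in two places worth noting.  First, the paper does not bound the unconstrained estimator and then project; instead it applies the first-order optimality condition of the constrained problem \eqref{eq:G_n_lambda_volterra} directly (their Theorem \ref{thm:confidence_interval}), plugging $h=G^\star\in\mathscr G_{\textrm{ad}}$ into the variational inequality and using Cauchy--Schwarz in the weighted norm.  Your projection/non-expansiveness argument is an equally valid and arguably more transparent route to the same inequality.  Second, for the concentration step the paper sets up an abstract Hilbert-space framework with $X=\mathbb R^{M\times M}$, $Y=\mathbb R^M$ and operators $T_nG=Gu^{(n)}$, proves a Fredholm-determinant version of the tail bound (Theorem \ref{thm:martingale_tail}), and then computes $\det(\lambda^{-1}M_N+\id_X)=(\lambda^{-M}\det V_{N,\lambda})^M$ by expanding in the basis $(E_{ij})$; your row-by-row mixing accomplishes the identical computation more concretely, and is precisely what their Fredholm determinant evaluates to.  In short: same idea, the paper packages it abstractly (which buys them the convolution case, Theorem \ref{thm:confidence_convolution}, as a second instantiation of the same theorem), while your version is more elementary and self-contained for this specific statement.
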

The proof of Theorem \ref{thm:confidence_volterra} is given in Section \ref{sec-pf-est}. 

\begin{remark}
Theorem \ref{rmk:data_set} 
quantifies     the accuracy of the estimator 
${G}_{N,\lambda}$  with an explicit dependence   on the given dataset $\mathcal{D}$  and 
the magnitude of $G^\star$.
The result is applicable to a general discrete-time Volterra propagator $G^\star$ and non-Markovian and correlated observations (as stated in Remark \ref{rmk:data_set}). 
It is worth noting  that this error estimate substantially improves the result presented in \cite[Theorem 2.10]{neuman2023statistical}, as the latter  assumes the propagator to be a (continuous-time) convolution kernel and requires the observed  trading speeds $(u^{(n)})_{n=1}^N$ to be fixed (i.e., $u^n=u^m$ for all $n,m =1, \ldots, M$) and deterministic vectors.
\end{remark} 
Examples where the Volterra price impact kernel $G^\star$ is in fact a convolution kernel arise from empirical studies and are quite popular in the literature (see e.g., \cite{bouchaud-gefen, gatheral2010no,Ob-Wan2005}). In the sequel, we incorporate this structural property of the price impact coefficient to enhance our estimation procedure.
We therefore assume that for the true price impact kernel $G^\star$ there exists   
    $K^\star = (K^\star_i)_{i=0}^{M-1} $ such that 
    \begin{equation}\label{eq:price_impact_kernel}
 G^\star = \begin{pmatrix}
     K^\star_0    & 0 & \cdots & 0\\
    K^\star_1     & K^\star_0   & \cdots & 0\\
     \vdots & \vdots & \ddots & \vdots\\
     K^\star_{M-1 }     &  K^\star_{M-2 }     & \cdots  &  K^\star_{0 }
 \end{pmatrix},
\end{equation}
and $K^\star  $ 
is in the 
following  class $ \mathscr{K}_{\textrm{ad}}$  of   admissible  convolution  kernels:
\begin{equation} \label{assume:convolution_kernel} 
\mathscr{K}_{\textrm{ad}} \coloneqq  
\left\{ 
K=(K_{i})_{i=0}^{M-1}
\,\middle\vert\, 
\begin{aligned}
&\textnormal{$K_i-K_{i-1} \le K_{i+1}-K_{i} \le 0$  for all $1\le i \le M-2$,}
\\
&\textnormal{and the associated $G$ defined by \eqref{eq:price_impact_kernel} is in $\mathscr{G}_{\textrm{ad}}$.}
 \end{aligned}
\right\}.
\end{equation}

 \begin{remark}
Note that \eqref{assume:convolution_kernel}
 assumes that the price impact $G^\star_{i,j}$ is determined by a convex and decreasing kernel evaluated at equally distributed time grids.
 Specifically this means that there exists a function  
    $K^\star:\mathbb{R}_{+} \to  \mathbb{R}_{+}$, called a resilience function,  
     such that  
    $G^\star_{i,j}=K^\star(t_{i}-t_j)= K^\star(t_{i-j})$  (with $t_0=0$)  for all $1\le j\le i\le M$, then it is easy to see that the first constraint in \eqref{assume:convolution_kernel}
 holds 
if $K^\star$ is convex and decreasing.   
\end{remark}   

\begin{remark}\label{rem-examples} 
We present some typical examples for price impact convolution kernels, whose  projection on a finite grid belongs to $\mathscr{K}_{\textrm{ad}}$. In \cite{bouchaud-gefen, gatheral2010no} among others, the following kernel was introduced:   
$ K(t)=\frac{\ell_{_{0}}}{(\ell_{0}+t)^{\beta}}$, for some constants $\beta, \ell_0>0$. The example of $K(t)=\frac{1}{t^{\beta}}1_{\{t>0\}}$ for  $0< \beta < 1/2,$ were proposed by Gatheral in \cite{gatheral2010no}.  
 The case where $K (t) =  e^{-\rho t}$, for some constant $\rho >0$, was proposed by Obizhaeva and Wang \cite{Ob-Wan2005}.  
\end{remark}

The convolution structure in \eqref{eq:price_impact_kernel} simplifies the estimation of the matrix $G^\star\in \mathbb{R}^{M \times M} $ to the estimation of  $K^\star \in \mathbb{R}^{M}$. Let   $\mathcal{D} =  \{(S^{(n)}_{t_i})_{i=1}^{M+1}, (u^{(n)}_{t_i})_{i=1}^{M}, (A^{(n)}_{t_i})_{i=1}^{M}
\mid     n=  1, \ldots, N\}$  be a given dataset.
By \eqref{eq:price_impact_kernel} and \eqref{eq:data_price_n}, we have for any 
   $n=1,\ldots, N$, 
\begin{align}
\label{eq:price_convolution_model}
S^{(n)}_{t_{i+1}}-S^{(n)}_{t_1}-A^{(n)}_{t_i}
&=
\sum_{j=1}^i K^\star_{i-j }       u^{(n)}_{t_j} +\eps^{(n)}_{t_i}   
= \sum_{j=0}^{i-1} K^\star_{j }         u^{(n)}_{t_{i-j}} +\eps^{(n)}_{t_i}, 
\quad  i=1,\ldots, M,
\end{align}
which can be equivalently  written as  $y^{(n)} =U_n K^\star +\eps^{(n)} $,  
with 
\begin{equation} \label{eq:y_n_U_n}
    y^{(n)} = 
\begin{pmatrix}
S^{(n)}_{t_{2}}-S^{(n)}_{t_1}-A^{(n)}_{t_1} 
\\
\vdots
\\
S^{(n)}_{t_{M+1}}-S^{(n)}_{t_1}-A^{(n)}_{t_M} 
\end{pmatrix},
\;
U_n = \begin{pmatrix}
     u^{(n)}_{t_1} & 0 & \cdots & 0\\
    u^{(n)}_{t_2} & u^{(n)}_{t_1} & \cdots & 0\\
     \vdots & \vdots & \ddots & \vdots\\
     u^{(n)}_{t_M} & u^{(n)}_{t_{M-1}} & \cdots  &  u^{(n)}_{t_1}
 \end{pmatrix},
 \;
 \eps^{(n)} = 
\begin{pmatrix}
\eps^{(n)}_{t_1}
\\
\vdots
\\
\eps^{(n)}_{t_M}
\end{pmatrix}.
\end{equation}

The data series of $S^n$ in \eqref{eq:price_convolution_model} motivates us to consider the following constrained least-squares estimator:
\begin{equation}
\label{eq:K_n_lambda}
K_{N,\lambda}\coloneqq\argmin_{K \in \mathscr{K}_{\textrm{ad}}}
\left(
\sum_{n=1}^N \|y^{(n)}-U_n K \|^2+\lambda\|K\|^2
\right),
\end{equation}
where 
 $\lambda>0$
 is a given regularisation parameter,
and 
$\mathscr{K}_{\textrm{ad}} $ 
is given in \eqref{assume:convolution_kernel}.
Similar to $G_{N,\lambda}$ in \eqref{eq:G_n_lambda_volterra},
$K_{N,\lambda}$
can be computed by projecting the unconstrained least-squares estimator onto the parameter set $\mathscr{K}_{\textrm{ad}}$:
\begin{align}
\label{eq:project_admissible_conv}
    K_{N,\lambda} 
=\argmin_{K \in \mathscr{K}_{\textrm{ad}}}\left\| W_{N,\lambda}^
{\frac{1}{2}}\left(K-\tilde{K}_{N,\lambda}\right)\right\|^2,
\end{align} 
where 
\be \label{w-def}
W_{N,\lambda}\coloneqq \sum_{n=1}^N U^\top_n U_n  +\lambda \mathbb{I}_{M },
\quad 
\tilde{K}_{N,\lambda} \coloneqq W_{N,\lambda}^{-1} \sum_{n=1}^N U^\top_n y^{(n)}.
\ee
with $y^{(n)}\in \mathbb{R}^M$ and $U_n\in \mathbb{R}^{M \times M}$ defined in \eqref{eq:y_n_U_n}.
Given $K_{N,\lambda}=((K_{N,\lambda})_i)_{i=0}^{M-1}$,
the associated propagator 
$G_{N,\lambda}$
is then defined  according to \eqref{eq:price_impact_kernel}: 
\begin{equation}
\label{eq:G_n_lambda_convolution}
 G_{N,\lambda} = \begin{pmatrix}
     (K_{N,\lambda})_0    & 0 & \cdots & 0\\
    (K_{N,\lambda})_1     & (K_{N,\lambda})_0   & \cdots & 0\\
     \vdots & \vdots & \ddots & \vdots\\
     (K_{N,\lambda})_{M-1 }     &  (K_{N,\lambda})_{M-2 }     & \cdots  &  (K_{N,\lambda})_{0 }
 \end{pmatrix}.   
\end{equation}

The following theorem   is analogue to Theorem \ref{thm:confidence_volterra} and 
provides a confidence region of the estimator ${K}_{N,\lambda}$.  

\begin{theorem}\label{thm:confidence_convolution}
Suppose Assumption \ref{assum:offline_data} holds and that $K^{\star}\in \mathscr{K}_{\textrm{ad}}$.
    For each $\lambda>0$, let 
    ${K}_{N,\lambda}$ be defined in 
    \eqref{eq:K_n_lambda}.
    Then for all $\lambda>0$ and $\delta \in (0,1)$,
    with $\sP'$-probability at least $1-\delta$,
    \be \label{diff-k} 
\begin{aligned}
    %\label{eq:G_error}
&
\left\|W_{N,\lambda}^{\frac{1}{2}} ({K}_{N,\lambda}  - K^\star)  \right\|
\le  R
\left(
\log\left(
\frac{\lambda^{-M}  \det(W_{N,\lambda} ) }{\delta^2}
\right)\right)^{\frac{1}{2}}
+\lambda \left \| 
W_{N,\lambda}^{-\frac{1}{2}}
K^\star
    \right\|,    
\end{aligned}
\ee
with  $W_{N,\lambda} =\sum_{n=1}^N U_n^\top  U_n+\lambda\mathbb{I}_M$. 
\end{theorem}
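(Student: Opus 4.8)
The plan is to follow the same two-part strategy as in the proof of Theorem \ref{thm:confidence_volterra}, now exploiting that \eqref{eq:price_convolution_model} recasts the convolution model as a linear regression $y^{(n)} = U_n K^\star + \eps^{(n)}$ with matrix regressors $U_n$ and a \emph{vector} unknown $K^\star\in\mathbb{R}^M$, in place of the matrix unknown $G^\star$ of the Volterra case. First I would reduce the constrained estimator $K_{N,\lambda}$ to the unconstrained one $\tilde K_{N,\lambda}$ of \eqref{w-def}. By \eqref{eq:project_admissible_conv}, $K_{N,\lambda}$ is the orthogonal projection of $\tilde K_{N,\lambda}$ onto $\mathscr{K}_{\textrm{ad}}$ for the inner product $\langle x,y\rangle_{W_{N,\lambda}}:=\langle W_{N,\lambda}^{1/2}x, W_{N,\lambda}^{1/2}y\rangle$. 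Since $\mathscr{K}_{\textrm{ad}}$ is closed and convex — its first defining constraints are linear in $K$, and the requirement $G\in\mathscr{G}_{\textrm{ad}}$ pulls a convex set back through the linear map \eqref{eq:price_impact_kernel} — and $K^\star\in\mathscr{K}_{\textrm{ad}}$ by hypothesis, non-expansiveness of convex projections gives
$$\|W_{N,\lambda}^{1/2}(K_{N,\lambda}-K^\star)\|\le \|W_{N,\lambda}^{1/2}(\tilde K_{N,\lambda}-K^\star)\|,$$
so it suffices to bound the right-hand side.

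For the unconstrained error, I would substitute $y^{(n)}=U_nK^\star+\eps^{(n)}$ into $\tilde K_{N,\lambda}=W_{N,\lambda}^{-1}\sum_{n=1}^N U_n^\top y^{(n)}$ and use $\sum_{n=1}^N U_n^\top U_n = W_{N,\lambda}-\lambda\mathbb{I}_M$ to obtain
$$W_{N,\lambda}^{1/2}(\tilde K_{N,\lambda}-K^\star)=W_{N,\lambda}^{-1/2}S_N-\lambda W_{N,\lambda}^{-1/2}K^\star,\qquad S_N:=\sum_{n=1}^N U_n^\top\eps^{(n)}.$$
The triangle inequality then isolates the deterministic bias term $\lambda\|W_{N,\lambda}^{-1/2}K^\star\|$, which is exactly the second summand of \eqref{diff-k}, and reduces everything to controlling the self-normalized quantity $\|S_N\|_{W_{N,\lambda}^{-1}}=\|W_{N,\lambda}^{-1/2}S_N\|$.

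The core step is a self-normalized tail bound via the method of mixtures, which I expect to reuse from the martingale machinery of Section \ref{sec-mart} underlying Theorem \ref{thm:confidence_volterra}. For $\theta\in\mathbb{R}^M$ and $n=0,\ldots,N$ set $M_n^\theta:=\exp(\langle\theta,\sum_{k=1}^n U_k^\top\eps^{(k)}\rangle-\tfrac{R^2}{2}\sum_{k=1}^n\|U_k\theta\|^2)$. Because $u^{(n)}$, and hence $U_n$, is $\mathcal{G}_{n-1}$-measurable by Definition \ref{def:offline_data}, applying Assumption \ref{assum:offline_data} with test vector $v=U_n\theta$ shows $(M_n^\theta)_n$ is a nonnegative supermartingale with $\mathbb{E}^{\mathbb{P}'}[M_N^\theta]\le 1$. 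Mixing over $\theta\sim\mathcal{N}(0,(R^2\lambda)^{-1}\mathbb{I}_M)$ and completing the square yields
$$\int M_N^\theta\,dh(\theta)=\Big(\frac{\lambda^M}{\det W_{N,\lambda}}\Big)^{1/2}\exp\Big(\frac{1}{2R^2}\|S_N\|_{W_{N,\lambda}^{-1}}^2\Big),$$
whose expectation is still at most one. Ville's inequality then gives, with $\mathbb{P}'$-probability at least $1-\delta$,
$$\|S_N\|_{W_{N,\lambda}^{-1}}^2\le R^2\log\Big(\frac{\lambda^{-M}\det W_{N,\lambda}}{\delta^2}\Big),$$
which is the first term of \eqref{diff-k} after taking square roots; combined with the previous two displays this proves the theorem.

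The main obstacle is getting this self-normalized step right: one must verify the predictable/martingale structure carefully — that $\eps^{(n)}$ is conditionally $R$-sub-Gaussian given $\mathcal{G}_{n-1}$ while $U_n$ is $\mathcal{G}_{n-1}$-measurable — so that the vector increments $U_n^\top\eps^{(n)}$ genuinely form the predictably weighted martingale the supermartingale property requires. The determinant bookkeeping also differs from the Volterra case precisely because $K^\star$ is an $M$-vector: the mixing measure lives on $\mathbb{R}^M$, producing $\lambda^{-M}$ and $\det(W_{N,\lambda})$ to the first power in \eqref{diff-k}, in contrast to $\lambda^{-M^2}$ and $(\det V_{N,\lambda})^M$ in \eqref{bnd-err-1}, where the unknown was the $M\times M$ matrix $G^\star$.
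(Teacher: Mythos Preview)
Your proposal is correct and arrives at the same bound through essentially the same self-normalized martingale machinery of Section~\ref{sec-mart}. The one point of difference is how you handle the constraint $K\in\mathscr{K}_{\textrm{ad}}$: the paper simply invokes Theorem~\ref{thm:confidence_interval} with $X=Y=\mathbb{R}^M$ and $A_i=U_i$, and that theorem deals with the constrained estimator directly via the first-order optimality condition (testing against $h=K^\star\in\mathscr{K}_{\textrm{ad}}$ in the variational inequality). You instead reduce to the unconstrained estimator $\tilde K_{N,\lambda}$ by non-expansiveness of the metric projection onto the closed convex set $\mathscr{K}_{\textrm{ad}}$ in the $W_{N,\lambda}$-inner product. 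Both routes yield the identical inequality
\[
\|W_{N,\lambda}^{1/2}(K_{N,\lambda}-K^\star)\|\le \|S_N\|_{W_{N,\lambda}^{-1}}+\lambda\|W_{N,\lambda}^{-1/2}K^\star\|
\]
before the tail bound is applied, and your method-of-mixtures computation is exactly the content of Proposition~\ref{prop:tail_stopping} specialized to $X=\mathbb{R}^M$ and $V=\lambda\mathbb{I}_M$. Your closing remark correctly identifies why the determinant factors differ from the Volterra case: here the unknown lives in $\mathbb{R}^M$ rather than $\mathbb{R}^{M\times M}$, so the Gaussian mixing is over $\mathbb{R}^M$ and produces $\lambda^{-M}\det(W_{N,\lambda})$ instead of $\lambda^{-M^2}(\det V_{N,\lambda})^M$.
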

The proof of Theorem \ref{thm:confidence_convolution} is given in Section \ref{sec-pf-est}. 

\begin{remark} \label{rem-con-rate} 
By imposing the structural property \eqref{eq:price_impact_kernel}, we achieve  in Theorem  \ref{thm:confidence_convolution} a better dependence in the number of trading periods $M$ compared to Theorem \ref{thm:confidence_volterra}. Specifically compare the power of $\lam >0$ in the first term on the right-hand side of \eqref{bnd-err-1} to the corresponding term in \eqref{diff-k}.  %\footnote{{\color{blue}what is the convergence rate here? \red{RHS of (2.28) term with $\lam$? I have added an explanation }}}, 
 We present in Remark \ref{rem-asump-mat}  some additional important advantages for using a convolution kernel in the context of the regret analysis of the pessimistic control problem.  See Figure \ref{FIGURE LABEL_d}  for a numerical comparison of the estimators \eqref{eq:G_unconstraints}
 and  \eqref{eq:K_n_lambda}.
 \end{remark} 

\subsection{Pessimistic control design with offline data} \label{sec-pessim}
In the following two subsections, we propose a stochastic control framework that will eliminate the spurious correlation, which was defined in \eqref{sub-decom}, between the offline estimator ${G}_{N,\lambda}$ and a greedy trading strategy based on ${G}_{N,\lambda}$,  and derive a tight bound on the associated performance gap. 
 
An important consideration in designing trading strategies arise from the fact that pre-collected data may not provide a uniform exploration of the parameter space, and hence  certain entries of the unknown propagator $G^\star$ may have been estimated with limited accuracy. 
Consequently, if a trading strategy is solely based on the estimated propagator ${G}_{N,\lambda}$ in \eqref{eq:G_n_lambda_volterra}, its performance in real deployment may be suboptimal. To overcome this challenge, in this section we introduce an additional loss term that takes into account the statistical uncertainty of the estimated propagator. This is often referred to as the principle of pessimism in the face of uncertainty in the offline reinforcement learning literature (see e.g., \cite{jin2021pessimism,chang2021mitigating}).

We recall that the offline dataset $\mathcal{D}$ from Definition \ref{def:offline_data} is built out of $N$ trajectories of prices, trading speeds and signals $(S^{(n)}, u^{(n)}, A^{(n)})_{n=1}^N$  which are defined on the probability space $(\Omega',\mathcal{F}', \mathbb{P}') $. In Section \ref{sec-liq-model}  we have specified the filtered probability space $(\Omega, \mathcal F, \mathbb F=\{\mathcal F_{t_i}\}_{i=1}^M, \mathbb{P})$ in which the optimal liquidation problem \eqref{opt-prog} takes place. We define the following product space
\begin{equation*}
(\ol{\Omega}, \ol{\mathcal{F}}, \ol{\mathbb{P}})
= (\Omega' \times\Omega,\mathcal{F}'\times\mathcal{F}
        ,\mathbb P' \times \mathbb P).
\end{equation*}
We further define {\color{black}$\mathbb P_{\mathcal{D}}$} to be the probability measure on the product space conditioned on a realisation of $\mathcal{D}$ and $\E_{\mathcal D}$ as the corresponding conditional expectation. 

Next we introduce the pessimistic stochastic control problem in the spirit of \eqref{j-h-def} that corresponds to the optimal liquidation problem \eqref{opt-prog}.  
For the remainder of this section we assume that $N$ and the dataset $\mathcal D$ in \eqref{def:offline_data} are fixed and that for any $\lam >0$, $V_{\lam,N}$ is defined as in \eqref{eq:project_admissible}.

Let $L^\mathcal{A}_{\eqref{l-bnd}},    L^\mathscr{G}_{\eqref{l-bnd}} \geq 1$. We define a subclass of the admissible strategies $\mathcal A$ in \eqref{admiss} and admissible kernels $\mathscr{G}_{ad}$ in \eqref{assum:volterra_kernel} as follows: 
\be \label{l-bnd}
\mathcal{A}_{b} = \left\{ u \in \mathcal{A}: \black{ \|u \| }\leq L^\mathcal{A}_{\eqref{l-bnd}} \right\},
\quad \mathscr{G}_{b} = \left\{ G \in \mathscr{G}_{ad}: \|G \|_{\mathbb{R}^{M \times M}} \leq  L^\mathscr{G}_{\eqref{l-bnd}} \right\}. 
\ee
\begin{remark} 

We choose $L^\mathcal{A}_{\eqref{l-bnd}}$ to be large enough so that $\mathcal{A}_{b}$ contains strategies satisfying  the fuel constraint of \eqref{opt-pes}. The constant  $L^\mathscr{G}_{\eqref{l-bnd}}$ is chosen so that $ G^\star \in \mathscr{G}_{b}$. Since $\mathscr{G}_{b}$ provides a radius for the parameter space, using Theorem \ref{prop-opt-strat} and our assumption that the signal $A$ is known to the trader, we can also choose $L^\mathcal{A}_{\eqref{l-bnd}}$ such that the optimal strategy \eqref{prop-opt-strat} with $G^\star$, is in the set $\mathcal{A}_{b}$.
\end{remark}

Now, for an arbitrary admissible control strategy $u \in\mathcal{A}_{b}$, we define the following penalization functional  
 \begin{align}\label{fun_regularization}
   \ell_1(V_{N,\lambda},u) \coloneqq L^\mathcal{A}_{\eqref{l-bnd}}C_{\eqref{c-const}}(N) 
   \left\|
   V_{N,\lambda}^{-\frac{1}{2}} u
   \right\|, 
\end{align}
where $C_{\eqref{c-const}} (N) = C_{\eqref{c-const}}(N,M,\delta,\lambda,   R, L^\mathscr{G}_{\eqref{l-bnd}}, V_{N,\lambda})$ bounds the estimation error from Theorem \ref{thm:confidence_volterra} and it is given by, 
%{\color{red}I think the $R$ is outside the square-root? Yes} 
\be \label{c-const} 
\begin{aligned}
C_{\eqref{c-const}} (N)
\coloneqq  R \left( 
\log\left(
\frac{\lambda^{-M^2} (\det(V_{N,\lambda}))^{M}}{\delta^2}
\right)\right)^{\frac{1}{2}}
+\lambda   L^\mathscr{G}_{\eqref{l-bnd}}  \left \|  
  V_{N,\lambda}^{-\black{1/2}}  \right\|_{\mathbb{R}^{M \times M}}.
\end{aligned} 
\ee 
  Here we have used the fact that for any two matrices $A,B \in \mathbb{R}^{M \times M}$ we have in the Frobenius norm
\be \label{frob-id}
\|A  B \|_{\mathbb{R}^{M \times M}}  \leq \|A  \|_{\mathbb{R}^{M \times M} } \| B \|_{\mathbb{R}^{M \times M}}.
\ee
%{\color{red}why is the exponent $-1$ and not $-1/2$ (in (3.28))?} \red{EN: there is an issue, compare with \eqref{bnd-err-1} . We need to bound $\langle G^*,V^{-1}_{N,\lam} G* \rangle $, but we don't know $G^*$ so we need to take the bound $\sup_{G}  \langle G,V^{-1}_{N,\lam} G \rangle $. I dont think that our choice of $ L^\mathscr{G}_{\eqref{l-bnd}}$ would be enough. }
%
%{\color{blue}W: From (3.18) $\sqrt{\langle G^* V^{-1/2}_{N,\lam},V^{-1/2}_{N,\lam} G^* \rangle } \leq \| G^{\star} \|_{\mathbb{R}^{M \times M}} \| V^{-1/2}_{N,\lam}  \|   \leq L_{(3.26)}  \| V^{-1/2}_{N,\lam}  \| $?  }  
%
The regularized cost functional for the pessimistic Volterra liquidation problem is given by, 
\begin{equation} \label{j-p1} 
  {J}_{P,1}(u;G_{N,\lambda}) \coloneqq \mathbb{E}_{\mathcal D} \left[ \sum_{i=1}^{M}   \sum_{j=1 }^{i} (G_{N,\lambda})_{i,j}  u_{t_j} u_{t_i}   + \sum_{i=1}^M A_{t_i} u_{t_i} + \ell_1(V_{N,\lambda},u) \right].
\end{equation}
The cost functional \eqref{j-p1} replaces the unknown propagator in \eqref{per-func} with the estimated propagator $G_{N,\lambda}$ and includes an additional cost term $\ell_1$  to discourage the agent from taking actions that are not supported by the offline data. We identify the expectation of $\ell_1$ as a $\dl$-uncertainty quantifier (see \eqref{ell-1-def}). This statement will be made precise in Section \ref{sec-res-pess}. The penalty strength is determined by the inverse covariance matrix $V^{-1/2}_{N,\lambda}$, where a larger $V^{-1/2}_{N,\lambda}$ implies higher uncertainty in the estimated model $G_{N,\lambda}$ and a stronger penalty $\ell_1$.  As we will show in Section \ref{sec-res-pess}, this pessimistic loss yields a  trading strategy that can compete with any other admissible strategy, with regret depending explicitly on the offline dataset.

In the case where the price impact kernel $G^{\star}$ is given by a convolution kernel as in \eqref{eq:price_impact_kernel},
the impact of a   trading speed $u = (u_1,\ldots,u_M)^\top  \in \mathbb R^{M}$
on the price  $S$ includes the following matrix $U\in \sR^{M\times M}$ (see \eqref{eq:price_convolution_model}):
\be \label{t-op} 
   U\coloneqq  \begin{pmatrix}
     u_{1} & 0 & \cdots & 0\\
    u_{2} & u_{1} & \cdots & 0\\
     \vdots & \vdots & \ddots & \vdots\\
     u_{M} & u_{M-1} & \cdots  &  u_{1}
 \end{pmatrix}. % \quad u = (u_1,...,u_M)^\top \in \mathbb R^{M}. 
\ee
Similarly to $\mathscr{G}_{b}$ in \eqref{l-bnd}, we define the following subclass of convolution kernels, recalling that $K$ is a vector, 
\be \label{k-bnd}
\mathscr{K}_{b} = \left\{ K \in \mathscr{K}_{ad}: \|K \| \leq  L^\mathscr{K}_{\eqref{k-bnd}} \right\}. 
\ee
We then introduce the following regularization 
\begin{align}\label{eq:cost_conv}
\ell_2(W_{N,\lambda},u) \coloneqq L^{\mathcal A}_{\eqref{l-bnd}}C_{\eqref{c-2-const} }(N)  \left \| U W_{N,\lambda}^{-1/2}   \right \|_{\mathbb{R}^{M\times M}},
\end{align}
where $W_{N,\lambda}$ was defined in \eqref{w-def} and 
$C_{\eqref{c-2-const}} (N) = C_{\eqref{c-2-const}}(N,M,\delta,\lambda,R  ,
L^\mathscr{K}_{\eqref{k-bnd}}, W_{N,\lambda})$ 
 is the estimation error from Theorem \ref{thm:confidence_convolution},
\be \label{c-2-const}
\begin{aligned}
C _{\eqref{c-2-const}} (N)  \coloneqq     R
\left(
\log\left(
\frac{\lambda^{-M}  \det(W_{N,\lambda} ) }{\delta^2}
\right)\right)^{\frac{1}{2}}
+\lambda  L^{\mathscr K}_{\eqref{k-bnd}} \left \| 
W_{N,\lambda}^{\black{-1/2}}
    \right\|_{\mathbb{R}^M \times \mathbb{R}^M} .
\end{aligned}
\ee
Recall that the estimated convolution kernel 
  $K_{N,\lambda}$ was defined in \eqref{eq:K_n_lambda}.
The regularized objective functional in the convolution case is given by, 
\begin{equation} \label{j-p2} 
  J_{P,2}(u;K_{N,\lambda}) \coloneqq \mathbb{E}_{\mathcal D}\left[ \sum_{i=1}^{M}   \sum_{j=1 }^{i} (K_{N,\lambda})_{i - j}  u_{t_j} u_{t_i}   + \sum_{i=1}^M A_{t_i} u_{t_i} + {\ell}_2(W_{N,\lambda},u) \right],
\end{equation}
where we used the relation \eqref{eq:G_n_lambda_convolution}
between  the   estimated convolution kernel 
  $K_{N,\lambda}$ and the associated 
 propagator  $G_{N,\lambda}$.
Note that since the estimated propagator is of convolution form, the cost $\ell_2$ in \eqref{eq:cost_conv} measures the impact of the statistical  uncertainty of $K_{N,\lambda}$ on a  trading speed $u$
through the corresponding matrix $U$ (compare to~$\ell_1$ in \eqref{fun_regularization} and \eqref{j-p1}). 

We define the following pessimistic optimization problems for $i=1,2$:  
\begin{equation} \label{opt-pes} 
   \begin{cases}
&\min_{u \in \mathcal A_b} J_{P,i}(u;G_{N,\lambda}), \\
&\textrm{s.t.}  \sum_{i=1}^N u_{t_i} = x_0,
  \end{cases} 
\end{equation} 

\begin{remark} 
The existence of a unique minimizer $\hat{u} \in \mathcal{A}$ for ${J}_{P,i}$ follows directly from \cite[Lemma 2.33]{bonnans2013perturbation}
and the facts that 
$\mathcal{A}\ni u\mapsto {J}_{P,i}(u, G_{N,\lambda})\in \mathbb{R}$ is strongly convex, and $\left\{u\in \mathcal{A}_b\mid \sum_{i=1}^N u_i=x_0\right\} $ is nonempty if $L^\mathcal{A}_{\eqref{l-bnd}}$ is sufficiently large. Indeed,   the fact that $G_{\lam,N}$ in \eqref{eq:G_n_lambda_volterra} is chosen from the class of admissible kernels \eqref{assum:volterra_kernel} ensures the strong convexity of $J_{P,i}$.   
\end{remark}

\subsection{Main results on performance of pessimistic strategies} \label{sec-res-pess}

The following theorem provides an upper bound on the performance gap between the pessimistic solution to \eqref{opt-pes} and any arbitrary admissible strategy including the optimal strategy from Theorem \ref{prop-opt-strat} using the true propagator $G^\star$, in terms of the mean of $\ell_i$. We also derive the asymptotic behaviour of this bound with respect to large values of the sample size $N$ in Theorem \ref{thm-l-bound}. Throughout this section we assume that the dataset $\mathcal{D}$ is fixed according to Definition \ref{def:offline_data}.

We recall that the performance functional  $J(u;G^{\star})$
is related to the 
original control problem \eqref{per-func},
and 
the performance functional  
$J_{P,i}(u;G_{N,\lambda})$ is  related to the 
pessimistic control problem  \eqref{opt-pes}  conditioned on $\mathcal{D}$. Further, $G_{N,\lambda}$ which is determined by $\mathcal D$ was defined in \eqref{eq:G_n_lambda_volterra} and \eqref{eq:G_n_lambda_convolution}. The class of admissible Volterra kernels $\mathscr{G}_{ad}$ was defined in \eqref{assum:volterra_kernel},
and the subclass of convolution kernels with kernels in $\mathscr{K}_{ad} $, was defined in \eqref{assume:convolution_kernel}.

\begin{theorem}\label{THM:performance_bound}
Let $\hat{u}^{(i)}$ be the minimizer of $J_{P,i}(u;G_{N,\lambda})$, $i=1,2$.  Then under Assumption \ref{assum:offline_data}, for all $\lambda>0$ and $\delta \in (0,1)$ we have with $\mathbb P'$-probability at least $1-\delta$,  
\begin{itemize} 
\item[\textbf{(i)}] if $G^{\star} \in \mathscr{G}_{ad} $ is a Volterra kernel,  
$$
J(\hat u^{(1)};G^{\star})- J(u;G^{\star}) \leq  2 \mathbb{E}_{\mathcal D}  \left[  \ell_1(V_{N,\lambda},u) \right], \quad \textrm{for all } u\in \mathcal{A}_{b}, 
$$
\item[\textbf{(ii)}] 
if $G^{\star} $ is a convolution kernel with $K^\star \in  \mathscr{K}_{ad}$,  
$$
J(\hat u^{(2)};G^{\star})- J(u;G^{\star}) \leq  2 \mathbb{E}_{\mathcal D}  \left[  \ell_2(W_{N,\lambda},u) \right], \quad \textrm{for all } u\in \mathcal{A}_{b}. 
$$
\end{itemize} 
\end{theorem}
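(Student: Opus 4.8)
The plan is to instantiate the abstract suboptimality decomposition \eqref{sub-decom} with $\hat J = J_{P,1}(\cdot;G_{N,\lambda})$, $\hat G = G_{N,\lambda}$ and $\hat u = \hat u^{(1)}$, and with an arbitrary $u\in\mathcal A_b$ playing the role of $u^\star$ (the decomposition is an algebraic identity valid for any such $u$). The entire argument then reduces to verifying that
\[
\Gamma(u) := \mathbb{E}_{\mathcal D}\big[\ell_1(V_{N,\lambda},u)\big]
\]
is a $\delta$-uncertainty quantifier in the sense of Definition \ref{def-quant}; once this is established, \eqref{spurious-neg} renders the spurious-correlation term nonpositive and the stated bound follows by inspecting the two remaining terms.

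To verify the quantifier property I would first condition on a realisation of $\mathcal D$, which makes $G_{N,\lambda}$ and $V_{N,\lambda}$ deterministic. Since the underlying space is the product $(\Omega'\times\Omega,\mathcal F'\times\mathcal F,\mathbb P'\times\mathbb P)$ and every $(u,A)$ is $\mathcal F$-measurable, the conditional expectation $\mathbb{E}_{\mathcal D}$ integrates only over $\Omega$ with the estimator held fixed, so that $J_{P,1}(u;G_{N,\lambda}) = J(u;G_{N,\lambda}) + \Gamma(u)$, where $J(\cdot;G_{N,\lambda})$ is the functional \eqref{per-func} evaluated at the realised estimator. In the difference $J(u;G^\star)-J(u;G_{N,\lambda})$ the signal terms cancel, and the lower-triangular structure of the admissible propagators turns the restricted double sum into a full quadratic form, leaving
\[
J(u;G^\star)-J(u;G_{N,\lambda}) = \mathbb{E}_{\mathcal D}\big[u^\top(G^\star-G_{N,\lambda})u\big],
\]
with $u=(u_{t_1},\dots,u_{t_M})^\top$.

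The core step is to dominate this integrand by $\ell_1$. Setting $\Delta:=G^\star-G_{N,\lambda}$ and inserting $V_{N,\lambda}^{1/2}V_{N,\lambda}^{-1/2}$, I would write
\[
u^\top\Delta u = \big\langle (\Delta V_{N,\lambda}^{1/2})^\top u,\; V_{N,\lambda}^{-1/2}u\big\rangle,
\]
and apply Cauchy--Schwarz together with the submultiplicativity \eqref{frob-id}, which gives $\|(\Delta V_{N,\lambda}^{1/2})^\top u\|\le \|\Delta V_{N,\lambda}^{1/2}\|_{\mathbb R^{M\times M}}\,\|u\|$. On the event from Theorem \ref{thm:confidence_volterra}, which has $\mathbb P'$-probability at least $1-\delta$ and does not depend on $u$, one has $\|\Delta V_{N,\lambda}^{1/2}\|_{\mathbb R^{M\times M}}\le C_{\eqref{c-const}}(N)$, while $\|u\|\le L^{\mathcal A}_{\eqref{l-bnd}}$ because $u\in\mathcal A_b$. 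Hence $|u^\top\Delta u|\le \ell_1(V_{N,\lambda},u)$ holds pointwise and simultaneously for all $u$ on this single event; taking $\mathbb{E}_{\mathcal D}$ and using $|\mathbb{E}_{\mathcal D}[X]|\le\mathbb{E}_{\mathcal D}[|X|]$ yields $|J(u;G^\star)-J(u;G_{N,\lambda})|\le\Gamma(u)$ for all $u\in\mathcal A_b$, i.e.\ the $\delta$-uncertainty quantifier property.

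With $\Gamma$ identified, I would finish by evaluating the three terms of \eqref{sub-decom} on the good event: term (i) is nonpositive by \eqref{spurious-neg}; term (iii) is nonpositive since $\hat u^{(1)}$ minimises $J_{P,1}$; and term (ii) equals $J(u;G_{N,\lambda})+\Gamma(u)-J(u;G^\star)\le 2\Gamma(u)$ by the quantifier bound. Summing the three proves part (i). Part (ii) follows by the same scheme after replacing $Gu$ with $UK$ via the convolution structure \eqref{eq:price_impact_kernel} (with $U$ as in \eqref{t-op}): then $u^\top\Delta u = \langle W_{N,\lambda}^{-1/2}U^\top u,\; W_{N,\lambda}^{1/2}(K^\star-K_{N,\lambda})\rangle$, and Cauchy--Schwarz combined with Theorem \ref{thm:confidence_convolution} and the definition \eqref{eq:cost_conv} of $\ell_2$ gives $|u^\top\Delta u|\le\ell_2(W_{N,\lambda},u)$. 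The main obstacle is precisely this core step: arranging the matrix insertion so that Cauchy--Schwarz splits the quadratic form exactly into the penalty factor $\|V_{N,\lambda}^{-1/2}u\|$ appearing in $\ell_1$ and the confidence-region norm $\|(G_{N,\lambda}-G^\star)V_{N,\lambda}^{1/2}\|_{\mathbb R^{M\times M}}$ controlled by Theorem \ref{thm:confidence_volterra}, all while keeping the product-space bookkeeping consistent so that the uniform-in-$u$ event retains probability $1-\delta$.
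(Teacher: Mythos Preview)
Your proposal is correct and follows essentially the same route as the paper's proof: both arguments insert $V_{N,\lambda}^{1/2}V_{N,\lambda}^{-1/2}$ (respectively $W_{N,\lambda}^{1/2}W_{N,\lambda}^{-1/2}$) into the quadratic form, apply Cauchy--Schwarz together with the confidence bound of Theorem~\ref{thm:confidence_volterra} (respectively Theorem~\ref{thm:confidence_convolution}) to obtain the pointwise inequality $|u^\top(G^\star-G_{N,\lambda})u|\le \ell_i$, and then combine the optimality of $\hat u^{(i)}$ with the quantifier bound applied once to $\hat u^{(i)}$ and once to $u$. The only cosmetic difference is that you organise the chain via the abstract decomposition~\eqref{sub-decom}, whereas the paper writes out the inequalities \eqref{j-diff1}--\eqref{i01} directly; the content is the same.
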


One of the by-products of the proof of Theorem \ref{THM:performance_bound} is the fact that $\Gamma(u)=  \mathbb{E} \left[  \ell_i(W_{N,\lambda},u) \right]$ is a $\dl$-uncertainty quantifier in the sense of Definition \ref{def-quant}. 

\begin{corollary} \label{cor-quant}
Under Assumption \ref{assum:offline_data}, for all $\lambda>0$ and $\delta \in (0,1)$ we have with $\mathbb P'$-probability at least $1-\delta$,  
\begin{itemize}
\item[\textbf{(i)}] if $G^{\star} \in \mathscr{G}_{ad} $ is a Volterra kernel,  
$$
| J( u; G^{\star}) - J(   u ; \hat G)| \leq\mathbb{E}_{\mathcal D} \left[  \ell_1(W_{N,\lambda},u) \right],  \quad \textrm{for all } u\in \mathcal{A}_{b},
$$
\item[\textbf{(ii)}] 
if $G^{\star} $ is a convolution kernel with $K^\star \in  \mathscr{K}_{ad}$,  
$$
| J( u; G^{\star}) - J(   u ; \hat G)| \leq \mathbb{E}_{\mathcal D} \left[  \ell_2(W_{N,\lambda},u) \right],  \quad \textrm{for all } u\in \mathcal{A}_{b}.
$$
\end{itemize} 
Hence, $\Gamma(u)=  \mathbb{E} \left[  \ell_i(W_{N,\lambda},u) \right]$ is a $\dl$-uncertainty quantifier. 
\end{corollary}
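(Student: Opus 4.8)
The plan is to reduce both claims to a single pathwise bound on the quadratic form $u^\top(G^\star-G_{N,\lambda})u$, valid on the high-probability event supplied by Theorems~\ref{thm:confidence_volterra} and~\ref{thm:confidence_convolution}, and then integrate. First I would observe that the linear signal term $\sum_{i=1}^M A_{t_i}u_{t_i}$ does not involve the kernel and therefore cancels in the difference, so that, writing $\hat G=G_{N,\lambda}$ and using that both $G^\star$ and $\hat G$ are lower triangular,
\[
J(u;G^\star)-J(u;\hat G)=\mathbb{E}_{\mathcal D}\!\left[\sum_{i=1}^M\sum_{j=1}^i (G^\star-\hat G)_{i,j}\,u_{t_j}u_{t_i}\right]=\mathbb{E}_{\mathcal D}\big[u^\top(G^\star-\hat G)\,u\big].
\]
Passing the absolute value inside the expectation reduces everything to bounding the integrand $|u^\top(G^\star-\hat G)u|$ on the confidence event.

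For part (i) I would insert the identity $V_{N,\lambda}^{1/2}V_{N,\lambda}^{-1/2}=\mathbb{I}_M$ and write, with $B:=G^\star-\hat G$,
\[
u^\top B u=\big\langle (B V_{N,\lambda}^{1/2})^\top u,\;V_{N,\lambda}^{-1/2}u\big\rangle .
\]
Cauchy--Schwarz together with the fact that the operator norm is dominated by the Frobenius norm then yields $|u^\top B u|\le \|B V_{N,\lambda}^{1/2}\|_{\mathbb{R}^{M\times M}}\,\|u\|\,\|V_{N,\lambda}^{-1/2}u\|$. On the event of $\mathbb{P}'$-probability at least $1-\delta$ on which Theorem~\ref{thm:confidence_volterra} holds, the first factor is at most $C_{\eqref{c-const}}(N)$; here I would use the submultiplicativity~\eqref{frob-id} and $G^\star\in\mathscr{G}_b$ to pass from the term $\lambda\|G^\star V_{N,\lambda}^{-1/2}\|$ in the theorem to the term $\lambda L^{\mathscr G}_{\eqref{l-bnd}}\|V_{N,\lambda}^{-1/2}\|$ appearing in $C_{\eqref{c-const}}(N)$. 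Since $\|u\|\le L^{\mathcal A}_{\eqref{l-bnd}}$ for $u\in\mathcal A_b$, this is exactly $\ell_1(V_{N,\lambda},u)$ by~\eqref{fun_regularization}; integrating in $\mathbb{E}_{\mathcal D}$ gives (i).

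Part (ii) is structurally identical after rewriting the quadratic form through the convolution structure: using the matrix $U$ from~\eqref{t-op} and the identity $\sum_{i}\sum_{j\le i}(G^\star-\hat G)_{i,j}u_{t_j}u_{t_i}=\langle u,\,U(K^\star-K_{N,\lambda})\rangle$ obtained from~\eqref{eq:price_convolution_model}, I would insert $W_{N,\lambda}^{1/2}W_{N,\lambda}^{-1/2}$ and apply Cauchy--Schwarz to get $|\langle u,U(K^\star-K_{N,\lambda})\rangle|\le \|U W_{N,\lambda}^{-1/2}\|_{\mathbb{R}^{M\times M}}\,\|u\|\,\|W_{N,\lambda}^{1/2}(K^\star-K_{N,\lambda})\|$; Theorem~\ref{thm:confidence_convolution} bounds the last factor by $C_{\eqref{c-2-const}}(N)$ (again via~\eqref{frob-id} and $K^\star\in\mathscr{K}_b$), recovering $\ell_2(W_{N,\lambda},u)$.

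The main obstacle is the uniformity in $u$: the estimate must hold simultaneously for every admissible $u$ on one and the same event, which is precisely what qualifies $\Gamma$ as a $\delta$-uncertainty quantifier in the sense of Definition~\ref{def-quant}. This works only because the confidence bounds of Theorems~\ref{thm:confidence_volterra} and~\ref{thm:confidence_convolution} control the matrix quantities $\|(G^\star-\hat G)V_{N,\lambda}^{1/2}\|$ and $\|W_{N,\lambda}^{1/2}(K^\star-K_{N,\lambda})\|$ on a single event \emph{not depending on the strategy}, so that the entire $u$-dependence is carried by the deterministic weights $\|V_{N,\lambda}^{-1/2}u\|$ and $\|U W_{N,\lambda}^{-1/2}\|$. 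Consequently the defining event of $\Gamma$ contains this $u$-independent confidence event, hence has probability at least $1-\delta$, and the uncertainty-quantifier property follows; the remaining algebra is the routine Cauchy--Schwarz manipulation above.
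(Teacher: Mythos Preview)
Your proposal is correct and follows essentially the same route as the paper: cancel the signal term, express the difference as $\mathbb{E}_{\mathcal D}[\langle (G^\star-\hat G)u,u\rangle]$, insert $V_{N,\lambda}^{1/2}V_{N,\lambda}^{-1/2}$ (resp.\ $W_{N,\lambda}^{1/2}W_{N,\lambda}^{-1/2}$), apply Cauchy--Schwarz, and invoke Theorems~\ref{thm:confidence_volterra}/\ref{thm:confidence_convolution} together with \eqref{frob-id} and the bounds on $G^\star,K^\star$ to recover $\ell_i$. The only cosmetic difference is that the paper first applies Cauchy--Schwarz to $\langle (G^\star-\hat G)u,u\rangle$ and then inserts the identity inside $\|(G^\star-\hat G)u\|$ (see \eqref{g-diff-est1}--\eqref{g-diff-est2}), whereas you insert the identity first and then apply Cauchy--Schwarz; both yield the same bound, and your explicit remark on why the confidence event is $u$-independent (hence the uniformity required by Definition~\ref{def-quant}) is a point the paper leaves implicit.
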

The proofs of Theorem \ref{THM:performance_bound} and Corollary \ref{cor-quant} are given in Section \ref{sec-pf-pessim}.

In order to derive a convergence rate for the error bound on the regret, which was established in Theorem \ref{THM:performance_bound}, we make the following assumptions. Recall that $(u^{(n)})_{n=1}^N$ are the strategies recorded in the dataset $\mathcal{D}$ (see Definition \ref{def:offline_data}) and that for any $u^{(n)}$, $U_n$ is the matrix defined in \eqref{eq:y_n_U_n}.   

We give the definition of the Loewner order, i.e., the partial order defined by the convex cone of positive semi-definite matrices.
\begin{definition} \label{ineq-matrix} For any two symmetric  matrices $A,B\in  \mathbb{R}^{M \times M}$ we say that $A \leq B$ if for any  $x \in \mathbb{R}^M$, we have $x^{\top}(B-A)  x \ge 0$. 
\end{definition} 
\begin{assumption}\label{assum:concentration_inequ}
For any $\delta \in (0,1)$ there exists a (known) constant $C_{\ref{assum:concentration_inequ}}(\delta)>0$ such that the following bound holds with probability $1-\delta$,  
\begin{itemize}
\item[\textbf{(i)}]   if $G^{\star} \in \mathscr{G}_{ad} $ is a Volterra kernel 
\begin{align*}
  - \frac{C_{\ref{assum:concentration_inequ}}(\delta)}{\sqrt{N}} \mathbb{I}_{M}  \leq \frac{1}{N}\sum_{n=1}^N u^{(n)} (u^{(n)})^{\top}   -\Sigma  \leq  
\frac{C_{\ref{assum:concentration_inequ}}(\delta)}{\sqrt{N}} \mathbb{I}_{M}, 
\end{align*}
\item[\textbf{(ii)}]   if $G^{\star} $ is a convolution kernel with $K \in  \mathscr{K}_{ad}$, 
\begin{align*}
  - \frac{ C_{\ref{assum:concentration_inequ}}(\delta)}{\sqrt{N}} \mathbb{I}_{M}  \leq \frac{1}{N}\sum_{n=1}^N U_n^{\top} U_n  - \hat  \Sigma  \leq  
\frac{   C_{\ref{assum:concentration_inequ}}(\delta)}{\sqrt{N}} \mathbb{I}_{M}, 
 \end{align*}
\end{itemize} 
where $ \Sigma$ and $\hat{\Sigma}$ are symmetric positive-definite matrices, not depending on $(N,\dl)$.
\end{assumption}

\begin{remark} \label{rem-asump-mat} 
There is an important advantage in verifying Assumption \ref{assum:concentration_inequ}(ii) for convolution kernels compared to Assumption \ref{assum:concentration_inequ}(i) for Volterra kernels. Note that in case (ii), $U_n^{\top} U_n$ is a product of two triangular matrices (see \eqref{eq:y_n_U_n}) which is positive definite if the first entry $u_{t_1}^{(n)} \not = 0$. This means that $N^{-1}\sum_{n=1}^N U_n^{\top} U_n$ is positive definite if $\sum_{n=1}^N  u_{t_1}^{(n)} \not = 0$, which is an event with probability $1$, if the transaction size is assumed to be continuous. In reality the transaction size is quantized, and this event will have probability asymptotically close to $1$ for large $N$. This means that normalising this sum of matrices, with respect to a symmetric positive-definite matrix $\hat{\Sigma}$ is a very natural assumption, which is expected to hold for almost any dataset of metaorders. On the other hand, in case (i) the the product $u^{(n)} (u^{(n)})^{\top}$ yields a matrix of rank $1$, hence in order for the assumption to hold the number of samples $N$ has to be much larger than the number of grid points $M$.
%See Figure \ref{FIGURE LABEL_d} for more details.
\end{remark}

\paragraph{Notation:}We denote by $\ul \xi_{\Sigma }$ (resp.~$\ul \xi_{\hat \Sigma}$) the minimal eigenvalue of $\Sigma$ (resp.~$\hat{\Sigma}$),
and by $\ol \xi_{\Sigma }$ (resp.~$\ol \xi_{\hat \Sigma}$) the maximal eigenvalue of $\Sigma$ (resp.~$\hat{\Sigma}$). 

In the following we show that our pessimistic strategy $\hat u^{(i)}$ is $O(N^{-1/2}(\log N)^{1/2})$ close to any competing strategy, where $N$ corresponds to the sample size of the dataset specified in Definition \ref{def:offline_data}. 

\begin{theorem} \label{thm-l-bound} 
Let $\delta \in (0,1)$ and let $\hat{u}^{(i)}$ be the minimizer of $J_{P,i}(u;G_{N,\lambda})$, $i=1,2$ with $\lam=  C_{\ref{assum:concentration_inequ}}(\delta)N^{1/2}$. Then under Assumptions \ref{assum:offline_data} and \ref{assum:concentration_inequ} we have with $\mathbb{P}'$-probability at least $1-2\delta$,
\begin{itemize} 
\item[\textbf{(i)}] if $G^{\star} \in \mathscr{G}_{ad} $ is a Volterra kernel, 
$$
J(\hat u^{(1)};G^{\star})- J(u;G^{\star}) \leq  \frac{1}{\sqrt{N}}    \left( C_1(M,\delta,\Sigma,  N)  + C_2(M,\delta,\Sigma) \right), \quad \textrm{for all } u\in \mathcal{A}_{b}, 
$$
\item[\textbf{(ii)}] if $G^{\star} $ is a convolution kernel with $K \in  \mathscr{K}_{ad}$,  
$$
J(\hat u^{(2)};G^{\star})- J(u;G^{\star}) \leq  \frac{1}{\sqrt{N}} \left( C_1(M,\delta,\hat \Sigma,N)   +   \hat C_2(M,\delta,\hat \Sigma) \right), \quad \textrm{for all } u\in \mathcal{A}_{b}, 
$$
\end{itemize} 
where $C_1(N) = O(\sqrt{\log N})$ and $C_2 = O(1)$ are given by,  
\begin{align*}
 C_1(M,\delta,\Sigma,  N) &= 2R\ul \xi_{\Sigma}^{-1/2} (L^{\mathcal A}_{\eqref{l-bnd}})^2 M \sqrt{\log  \left(N   \frac{1}{\dl^2}\left(1 + \frac{  
 %\| \Sigma \|_{\mathbb{R}^{M \times M}}
 \ol \xi_{\Sigma} }  {2C_{\ref{assum:concentration_inequ}}(\dl) } \right) \right)} , \\
C_2(M,\delta,\Sigma) & = 2\ul \xi_{\Sigma}^{-1}  (L^{\mathcal A}_{\eqref{l-bnd}})^2 L^\mathscr{G}_{\eqref{l-bnd}} C_{\ref{assum:concentration_inequ}}(\delta)  \sqrt{M}, \\
\hat C_2(M,\delta,\Sigma) & = 2\ul \xi_{\Sigma}^{-1}  (L^{\mathcal A}_{\eqref{l-bnd}})^2 L^\mathscr{K}_{\eqref{k-bnd}} C_{\ref{assum:concentration_inequ}}(\delta)  M.
 \end{align*}
\end{theorem}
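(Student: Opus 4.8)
The plan is to start from the non-asymptotic regret bounds already furnished by Theorem \ref{THM:performance_bound}, namely $J(\hat u^{(i)};G^\star)-J(u;G^\star)\le 2\mathbb{E}_{\mathcal D}[\ell_i(\cdot,u)]$, and to convert the data-dependent penalties $\ell_i$ into the explicit $N^{-1/2}(\log N)^{1/2}$ rate by feeding in the concentration estimate of Assumption \ref{assum:concentration_inequ}. Since Theorem \ref{THM:performance_bound} holds on an event of $\mathbb P'$-probability at least $1-\delta$ and Assumption \ref{assum:concentration_inequ} supplies an independent event of probability at least $1-\delta$, I would work on their intersection, which has probability at least $1-2\delta$ by a union bound; this accounts for the $1-2\delta$ in the statement. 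The decisive algebraic step is the choice $\lambda=C_{\ref{assum:concentration_inequ}}(\delta)\sqrt N$: multiplying the lower Loewner bound of Assumption \ref{assum:concentration_inequ}(i) by $N$ gives $\sum_n u^{(n)}(u^{(n)})^\top\ge N\Sigma-C_{\ref{assum:concentration_inequ}}(\delta)\sqrt N\,\mathbb{I}_M$, so that adding $\lambda\mathbb{I}_M$ exactly cancels the negative perturbation and yields the clean Loewner bound $V_{N,\lambda}\ge N\Sigma$ (and likewise $W_{N,\lambda}\ge N\hat\Sigma$ in case (ii)).

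For part (i), this lower bound inverts to $V_{N,\lambda}^{-1}\le (N\Sigma)^{-1}$, whence $\|V_{N,\lambda}^{-1/2}\|_{\mathrm{op}}\le (N\ul \xi_{\Sigma})^{-1/2}$ and $\|V_{N,\lambda}^{-1/2}\|_{\mathbb{R}^{M\times M}}=\sqrt{\tr(V_{N,\lambda}^{-1})}\le\sqrt{M/(N\ul \xi_{\Sigma})}$. Using $\|u\|\le L^\mathcal{A}_{\eqref{l-bnd}}$ for $u\in\mathcal A_b$ I bound $\|V_{N,\lambda}^{-1/2}u\|\le L^\mathcal{A}_{\eqref{l-bnd}}(N\ul \xi_{\Sigma})^{-1/2}$, so that $\ell_1(V_{N,\lambda},u)\le (L^\mathcal{A}_{\eqref{l-bnd}})^2(N\ul \xi_{\Sigma})^{-1/2}C_{\eqref{c-const}}(N)$. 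It then remains to bound the two summands of $C_{\eqref{c-const}}(N)$. For the regularisation summand, substituting $\lambda=C_{\ref{assum:concentration_inequ}}(\delta)\sqrt N$ and the Frobenius estimate above gives a quantity of order $N^0$, which after the $N^{-1/2}$ prefactor yields exactly the $N^{-1/2}C_2$ contribution. For the log-determinant summand, I use the upper Loewner bound to obtain $V_{N,\lambda}\le N\Sigma+2C_{\ref{assum:concentration_inequ}}(\delta)\sqrt N\,\mathbb{I}_M$, hence $\det(V_{N,\lambda})\le(N\ol \xi_{\Sigma}+2C_{\ref{assum:concentration_inequ}}(\delta)\sqrt N)^M$; inserting this and $\lambda^{-M^2}$ into the logarithm, factoring out $M^2$ and using $M\ge1$ and $\delta^{-2}\ge1$, collapses the argument to $\log(N\delta^{-2}(1+\ol \xi_{\Sigma}/(2C_{\ref{assum:concentration_inequ}}(\delta))))$ up to constants, and $R\sqrt{M^2\log(\cdots)}=RM\sqrt{\log(\cdots)}$ produces the $M\sqrt{\log N}$ factor defining $C_1$. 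Collecting the $N^{-1/2}$ prefactors gives precisely $\tfrac{1}{\sqrt N}(C_1+C_2)$.

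Part (ii) follows the same template with $V_{N,\lambda}$ replaced by $W_{N,\lambda}$ and the penalty $\|V_{N,\lambda}^{-1/2}u\|$ replaced by the matrix penalty $\|UW_{N,\lambda}^{-1/2}\|_{\mathbb{R}^{M\times M}}$. Here I would use submultiplicativity $\|UW_{N,\lambda}^{-1/2}\|_{\mathbb{R}^{M\times M}}\le\|U\|_{\mathbb{R}^{M\times M}}\|W_{N,\lambda}^{-1/2}\|_{\mathrm{op}}$ together with the elementary count $\|U\|_{\mathbb{R}^{M\times M}}^2=\sum_{k=1}^M(M-k+1)u_k^2\le M\|u\|^2\le M(L^\mathcal{A}_{\eqref{l-bnd}})^2$ for the lower-triangular Toeplitz matrix $U$ of \eqref{t-op}. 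The extra $\sqrt M$ from $\|U\|_{\mathbb{R}^{M\times M}}$, combined with the $\sqrt M$ already present in the Frobenius bound for the regularisation summand of $C_{\eqref{c-2-const}}(N)$, is exactly why $\hat C_2$ carries a full factor $M$ rather than the $\sqrt M$ in $C_2$; the log-determinant summand, now with the lighter exponent $\lambda^{-M}$ and $\det(W_{N,\lambda})$ in place of its $M$-th power, produces $C_1(M,\delta,\hat\Sigma,N)$ of the same form.

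The main obstacle, or rather the only place requiring genuine care, is the bookkeeping in the log-determinant term: one must track the interplay between the power $\lambda^{-M^2}$ (resp.\ $\lambda^{-M}$), the $M$-th power of $\det(V_{N,\lambda})$, and the scaling $\lambda\sim\sqrt N$ so that the spurious $N$-dependence inside the logarithm reduces to the stated $\log(N\cdots)$ form, verifying that the residual $\delta$- and $M$-dependence can be absorbed using $M\ge1$ and $\delta^{-2}\ge1$. A secondary but essential point is to keep the operator-norm and Frobenius-norm estimates distinct throughout, since $\ell_i$ mixes the two, so that no spurious factor of $M$ is lost or gained; apart from this, every step is a direct consequence of the Loewner-order inequalities of Assumption \ref{assum:concentration_inequ} and the explicit constants \eqref{c-const} and \eqref{c-2-const}.
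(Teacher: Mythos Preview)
Your proposal is correct and follows essentially the same route as the paper's proof: work on the intersection of the two events (union bound giving $1-2\delta$), exploit the choice $\lambda=C_{\ref{assum:concentration_inequ}}(\delta)\sqrt{N}$ to obtain the clean Loewner bound $V_{N,\lambda}\ge N\Sigma$ (resp.\ $W_{N,\lambda}\ge N\hat\Sigma$), invert it to control $\|V_{N,\lambda}^{-1/2}u\|$ and $\tr(V_{N,\lambda}^{-1})$, and use the upper Loewner bound to control the log-determinant term in $C_{\eqref{c-const}}(N)$. The only cosmetic difference is that the paper passes through Jensen's inequality $\mathbb{E}_{\mathcal D}[\sqrt{u^\top V_{N,\lambda}^{-1}u}]\le\sqrt{\mathbb{E}_{\mathcal D}[u^\top V_{N,\lambda}^{-1}u]}$ before applying the eigenvalue bound, whereas you bound pointwise via the operator norm; since $\|u\|\le L^{\mathcal A}_{\eqref{l-bnd}}$ almost surely, both yield the same constants.
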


The proof of Theorem \ref{thm-l-bound} is given in Section \ref{sec-pf-pessim}.

\begin{remark} Theorem \ref{thm-l-bound} derives upper bounds of order $N^{-1/2}(\log N)^{1/2}$ for the performance difference between the optimal pessimistic strategies $\hat u^{(i)}$ and any arbitrary strategy of a competitor having access to a similar dataset. Note that $\mathcal A_b$ also includes the optimal strategy using the unknown $G^{\star}$ from Theorem \ref{prop-opt-strat}. This framework introduces a novel approach for nonparametric estimation of financial models, which is particularly effective in the case where the quality of the common dataset is poor or it contains a relatively low number of  samples, the statistical estimators can be biased and resulting greedy strategies may create unfavorable costs. See Figure \ref{FIGURE LABEL_c} for numerical illustrations.
\end{remark} 

\begin{remark} 
Our results using the pessimistic learning approach can be compared to the well-known robust finance approach, in which strong assumptions are made on the parametric model, and the unknown parameters are assumed to be within a certain radius from the true parameters. Our nonparametric framework suggests that the radius of feasible models is in fact measured by matrix norms induced by $V_{N,\lambda}^{-1/2}$ and $W_{N,\lambda}^{-1/2}$ on the left-hand side of \eqref{bnd-err-1} and \eqref{diff-k}, respectively. In contrast to the robust finance approach theses norms are determined directly from the dataset and are not chosen as a hyperparameter (see \eqref{eq:project_admissible} and \eqref{w-def}). 
\end{remark} 

\begin{remark}
We briefly compare the results of Theorems \ref{thm-l-bound} and \ref{THM:performance_bound} to existing results in the offline reinforcement learning literature. These results typically focus on the much simpler setup of Markov decision processes (MDPs) with unknown transition probabilities and rewards. Note that stochastic control problems related to propagators as in \eqref{per-func} and \eqref{j-p1} not only take place in a continuous state space but are also non-Markovian (see e.g., \cite{AJ-N-2022}). In \cite{jin2021pessimism} some results on pessimistic offline RL with respect to minimization of the spurious correlation and intrinsic uncertainty were derived for MDPs. The bound on suboptimality in Theorem \ref{THM:performance_bound} coincides with the corresponding bound for  MDPs established in Theorem 4.2 of \cite{jin2021pessimism}. The convergence rate of order $N^{-1/2}\log N$ in Theorem \ref{thm-l-bound} is compatible with the convergence rate of order $N^{-1/2}$ established in Corollary 4.5 of \cite{jin2021pessimism} for \emph{linear} MDP, under similar assumptions as in Assumption \ref{assum:concentration_inequ}. The logarithmic correction in Theorem \ref{thm-l-bound} is a result of the estimation scheme for $G_{N,\lam}$ (see \eqref{eq:G_n_lambda_volterra}), which is subject to the regularisation of the dataset in \eqref{eq:G_unconstraint_minimiser}. This estimation procedure is completely independent from the results of \cite{jin2021pessimism} and for a sufficiently regular dataset the regularization is not needed and the logarithmic term in Theorem \ref{thm:confidence_volterra} will vanish. 
%(1) convergence rate: 
%The established convergence rate of $N^{-1/2}$ in Theorem \ref{thm-l-bound} has also been shown in the offline-reinforcement learning literature under similar conditions as presented in Assumption \ref{assum:concentration_inequ}. For instance, in \cite[Corollary 4.5]{jin2021pessimism}, the suboptimality of a Pessimistic Value Iteration algorithm for linear MDPs, is shown to decay with rate $N^{-1/2}$ assuming the data covariance matrix, $\sum_{n=1}^{N} u^{(n)} (u^{(n)})^{\top}$, to converge at order $N^{-1}$ to the covariance matrix $\mathbb{E}[u u^{\top}]$. {\color{red}TBC}

%(2) $u_n$ iid 

%(3) Markovian vs non Markovian. 
\end{remark}

\section{Numerical Illustration} \label{sec-numerics} 
In this section, we examine the performance of the propagator estimators in Section \ref{sec-data} and  the pessimistic strategies presented in Section \ref{sec-res-pess}.
Using a synthetic dataset, we illustrate the following characteristics of our methods: 
\begin{itemize}
    \item Directly estimating a Volterra propagator using \eqref{eq:G_n_lambda_volterra} 
may result in large estimation errors unless  the dataset contains  sufficiently noisy trading strategies. By imposing a convolution structure and shape constraints of the estimated model,  \eqref{eq:K_n_lambda} significantly improves the estimation accuracy, even with a smaller sample size.

\item 
Minimising the execution costs in \eqref{per-func} after substituting the estimated propagator instead of the true one, yields a greedy strategy that is very sensitive to the accuracy of the estimated model and also creates unfavorable transaction costs. The pessimistic strategy takes the model uncertainty into account and achieves more stable performance and drastically reduces the execution costs. 
\end{itemize}

We  start by describing the   construction of the synthetic dataset  $\mathcal{D}$  for our experiments. For fixed $N$-trading days, we split each trading day into $5$ minute bins. Hence, for a trading day of $6.5$ hours we have $M= 78$. We assume that the unaffected price process $P$ has the following dynamics: 
\begin{equation*}\mathrm{d}P_{t} = I_t  \mathrm{d}t + \sigma_P \mathrm{d}W^P_{t},\quad P_{0} = p_0 >0, 
\end{equation*}
where $p_0$ is 
the  initial price,
$I$ is the expected return
following    an 
Ornstein-Uhlenbeck dynamics (cf. \cite[Section 2.3]{Lehalle-Neum18}), 
\begin{equation}
\label{eq-ornstein-uhlenbeck}
\mathrm{d}I_{t} = -\mu I_{t}\mathrm{d}t + \sigma \mathrm{d}W_{t},\quad I_{0} = 0,
\end{equation}
and the values of 
 $\sigma_P$, $\mu$ and $\sigma$ are  given in Table \ref{table:1}.  
The signal $A_{t_i}$ in \eqref{eq:data_price_n} at time $t_i \in \lbrace 0=t_1<t_2<\ldots<t_M =1 \rbrace$ is then given by
\begin{align*}
    A_{t_i} = \int_{0}^{t_i} I_u \mathrm{d}u.
\end{align*}
We consider a market with 3 types of traders, trading simultaneously over one year (i.e. $N= 252$ trading days). For simplicity, we construct a dataset 
with buy strategies,
by sampling 
the target inventory $x_0$ of each type of trade uniformly from $[500,2000]$. 
Including sell strategies in our dataset will not change our estimation. 
% , as we use the absolute value of the trading speed in our analysis. 
We assume that 
the traders do not have precise information on the true price impact parameters
and  adopt    the following commonly used strategies: 
\begin{itemize} 
\item TWAP trades, %starting with $x_0$ stocks and liquidating 
 aiming at $x_0$ stocks and buying 
at a constant rate throughout each day: 
$$
u^1_{t_i} = \frac{x_0}{M}, \quad i=1,\ldots,M. 
$$
\item Execution strategies according to \citet{Ob-Wan2005} as described in Remark \ref{rem-sol-no-sig}, 
$$
u^2=    \frac{x_0}{\boldsymbol{1}^{\top}(G + G^{\top} )^{-1} \boldsymbol{1}}(G + G ^{\top})^{-1}\boldsymbol{1}, 
$$
where $G_{i,j}=\hat{\kappa} e^{-\hat \rho (t_i-t_j)}\mathrm{1}_{\{ i \geq j \}}$, with $\hat \rho$ and $\hat \kappa$ sampled uniformly from $[\rho/2, 3\rho/2]$ and $[\kappa/2, 3\kappa/2]$, respectively. The values of  $\rho$ and $\kappa$ are given in table \ref{table:1}. 
\item Purely trend following strategies taking into account only temporary price impact as in Remark 3.4 of \cite{Lehalle-Neum18}, 
$$
u^3_{t_i} = \frac{I_{t_i}}{2 \hat \mu \hat{\kappa}}\big(1-e^{-\hat{\mu}(T-t_i)}\big), \quad i=1,\ldots,M, 
$$  
with   $\hat \kappa$ and $\hat \mu$   sampled uniformly from $[\kappa/2, 3\kappa/2]$ and $[\mu/2, 3\mu/2]$, respectively. The values of $\mu$ and $\kappa$ are given in Table \ref{table:1}. 
 \end{itemize}

  \begin{table}[ht]
\caption{Model Parameters} 
\centering 
\begin{tabular}{c c} 
\hline\hline 
Parameter & Value \\ [0.5ex]
\hline 
Price volatility $\sigma_{P}$ & 0.0088\\
Signal volatility $\sigma$ & 0.06\\
Signal mean reversion $\mu$ & 0.1\\
Trading Cost $\kappa $ & $ 0.01 $  \\
Resilience  $\rho $ & $ 0.04$  \\
\hline 
\end{tabular}\label{table:1} 
\end{table}

Given the above three trading strategies, 
for each $n=1,\ldots, N$, 
we generate the observed price trajectories according to the following dynamics (see \eqref{eq:data_price_n}): \begin{align}
\label{eq:price_experiments}
S^{(n)}_{t_{i+1}}   &=  \sum_{j =1}^{i} G^\star_{i,j}  u^{(n)}_{t_j} + P^{(n)}_{t_i} =  \sum_{j =1}^{i} G^\star_{i,j}  u^{(n)}_{t_j} + p_0 + A^{(n)}_{t_i} + \sigma_{P} W_{t_i},
\quad i=1,\ldots, M,    
\end{align}
where  $(u^{(n)}_{t_j})_{j=1}^M$ is a realisation of  
$u_{t_i} \coloneqq  u^1_{t_i}+   u^{2}_{t_i}+  u^{3}_{t_i}, i=1, \ldots,M.$

Here, we construct the market by giving equal weight to each of the type of trade,
but   similar results can be obtained by varying different weights to every strategy. 
We consider 
the true parameter $G^\star $ in \eqref{eq:price_experiments} to be a convolution-type propagator from one of the following two classes: 
\be \label{g-power} 
G^{\star,(1)}_{i,j} = K^{\star,(1)}(t_i-t_j) = \frac{  \kappa }{(t_i-t_j+1)^{\beta^{\star}}}, \quad i \geq j, \quad \textrm{for } \beta^{\star} \in \lbrace 0.1, 0.2, 0.3, 0.4 \rbrace,
\ee 
and 
\be \label{g-exp} 
G^{\star,(2)}_{i,j} = K^{\star,(2)}(t_i-t_j) = \kappa e^{-\rho^{\star}(t_i -t_j)}, \quad i \geq j,   \quad \textrm{for } \rho^{\star} \in \lbrace 0.1, 0.2, 0.3, 0.4\rbrace, 
\ee
where $\kappa$ is given in Table \ref{table:1}.

Given the above synthetic dataset $\mathcal D$, we  examine  the accuracy of the estimators
\eqref{eq:G_n_lambda_volterra} 
and \eqref{eq:K_n_lambda},
where the former performs a fully nonparametric estimation of  
the Volterra propagator, 
and the latter imposes a convolution structure of the estimated model. 
These estimators are implemented as in \eqref{eq:project_admissible}
and \eqref{eq:project_admissible_conv} with $\lambda=10^{-3}$, where 
the projection step 
is carried out   using the Python optimisation package CVXPY  \cite{diamond2016cvxpy}.
Our numerical results show that 
in the present setting, 
the Volterra estimator \eqref{eq:project_admissible} 
yields large estimation errors, 
as the observed trading speeds in the dataset  $\mathcal D$
are not random enough to fully explore the   parameter space (see Remark \ref{rem-asump-mat}).   
This is demonstrated in Figure \ref{FIGURE LABEL_d},
where we consider  
   a   propagator  $G^\star$ associated with the 
power-law kernel  \eqref{g-power} with $\kappa=0.01$ and $\beta^\star= 0.4$,
and present  the  accuracy
of the two estimators. 
Figure \ref{FIGURE LABEL_d} (right) shows that 
the    Volterra estimator  \eqref{eq:project_admissible} yields large
 component-wise relative errors,
 and hence it is difficult to   recover the power-law kernel   from the estimated model.
We refer the reader to Figure \ref{FIGURE LABEL_e} in Appendix \ref{sec-examples_noisy},
which shows that if the historical trading speeds in the  dataset are  sufficiently noisy, then the   estimator \eqref{eq:project_admissible}   recovers the true propagator accurately. 

\begin{figure}[!ht]
\centering
\includegraphics[trim=30 5 30 30, clip, width= 0.48\linewidth]{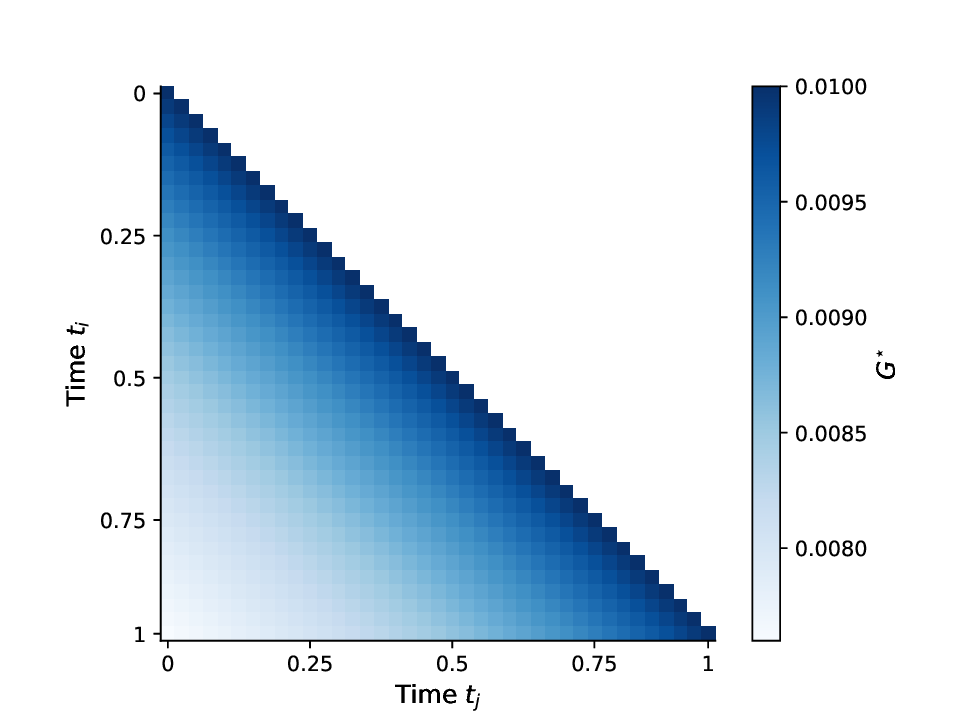}
\includegraphics[trim=30 5 30 30, clip, width=0.48\linewidth]{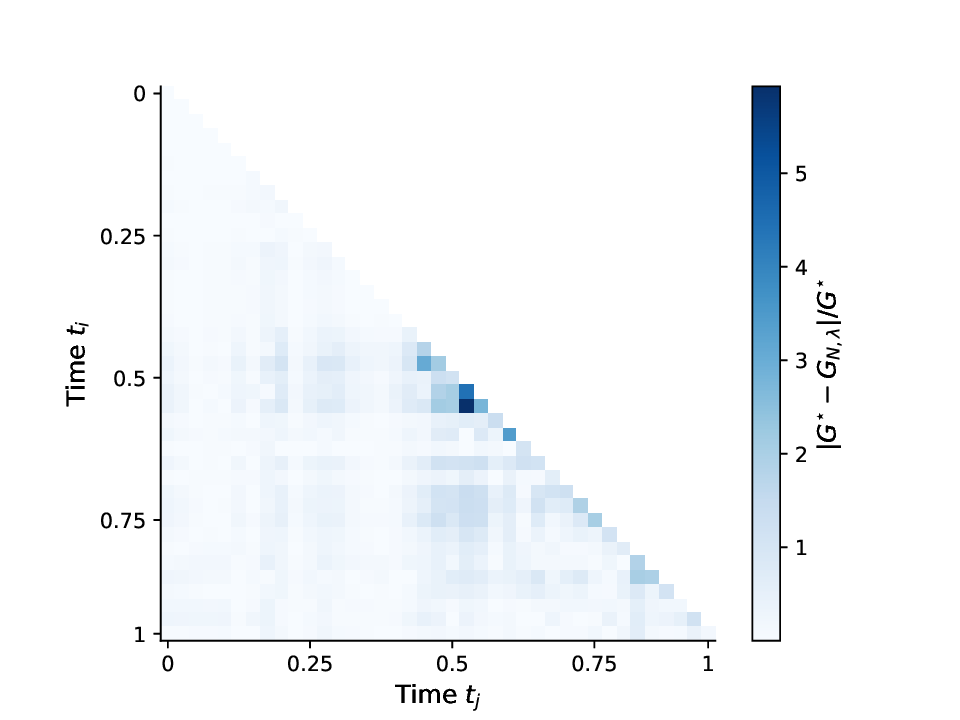}  
\includegraphics[trim=30 5 30 30, clip, width=0.48\linewidth]{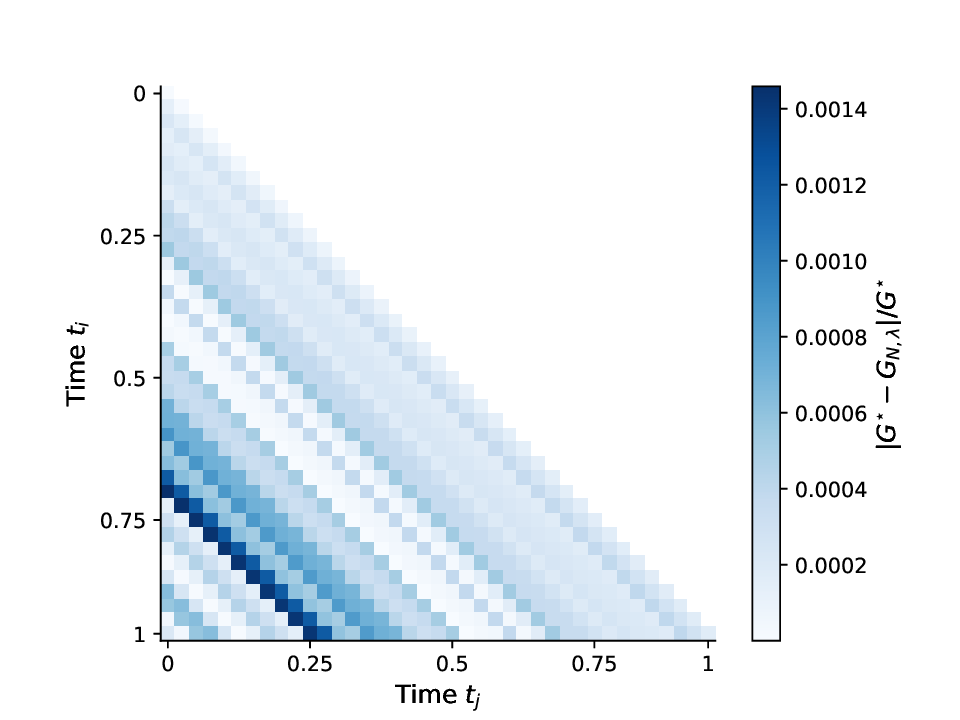}
\caption{
The true power-law propagator $G^{\star}(t)= 0.01 (t+1)^{-0.4}$ (left) and relative errors of the Volterra estimator (right) and the convolution estimator (bottom).
}
\label{FIGURE LABEL_d} 
\end{figure}
 
In contrast, 
Figure \ref{FIGURE LABEL_d} (bottom) clearly shows that 
by imposing a convolution structure of the estimated model, the estimator 
\eqref{eq:project_admissible_conv} achieves a much higher accuracy using the same dataset.  The estimator also   recovers the true convolution kernel accurately, even with a smaller sample size.
This can be seen from 
  Figure \ref{FIGURE LABEL}, where we  plot $K^{\star,(1)}$ in \eqref{g-power}  and $K^{\star,(2)}$
  in \eqref{g-exp}  for the above listed values of $\rho^{\star}, \beta^{\star}$ (see Figures \ref{SUBFIGURE LABEL 1} and \ref{SUBFIGURE LABEL 2}) and compare them to the estimated kernels obtained by \eqref{eq:project_admissible_conv} (see  Figures \ref{SUBFIGURE LABEL 3} and \ref{SUBFIGURE LABEL 4}).
One can clearly observe that, 
 for both considered kernels, 
the proposed estimator captures the behavior of  the true propagators accurately.
To quantify the accuracy of our estimators and to demonstrate the importance of the projection step,
 we define the following relative errors for different sample sizes $N$:
$$ \text{err}^{(i)} \coloneqq \max_{j \in \lbrace 1, \ldots, M \rbrace} \frac{|(K^{\star,(i)})_j - (\tilde{K}_{N,\lambda})_j|}{(K^{\star,(i)})_j},
\quad
\text{err}^{(i)}_{\textrm{proj}} \coloneqq \max_{j \in \lbrace 1, \ldots, M \rbrace} \frac{|(K^{\star,(i)})_j - (K_{N,\lambda})_j|}{(K^{\star,(i)})_j}, 
$$
 where $K^{\star,(1)}$ refers to the power-law kernel  \eqref{g-power} with
 $\beta^{\star} = 0.1$,   
 $K^{\star,(2)}$ refers to 
 the exponential kernel  \eqref{g-exp} with $\rho^{\star} = 0.1$,
 $\tilde{K}_{N,\lambda}$ is the estimated kernel using the plain least-squares estimator \eqref{w-def}, 
 and 
 ${K}_{N,\lambda}$ is obtained by \eqref{eq:project_admissible_conv} with the additional projection step.
 Table \ref{table:2} summarises the estimation accuracy for  sample sizes $N \in \lbrace 63, 126, 252 \rbrace$, which shows that both estimators achieve   relative errors of order $10^{-3}$. 
Moreover, one can see that by imposing monotonicity and convexity on the estimated model, the estimator \eqref{eq:project_admissible_conv} improves the accuracy of the plain least-squares estimator at least by a factor of 2. 

\begin{figure}[!ht]
\centering
\begin{subfigure}{.46\textwidth}
    \centering
    \includegraphics[width=1\linewidth]{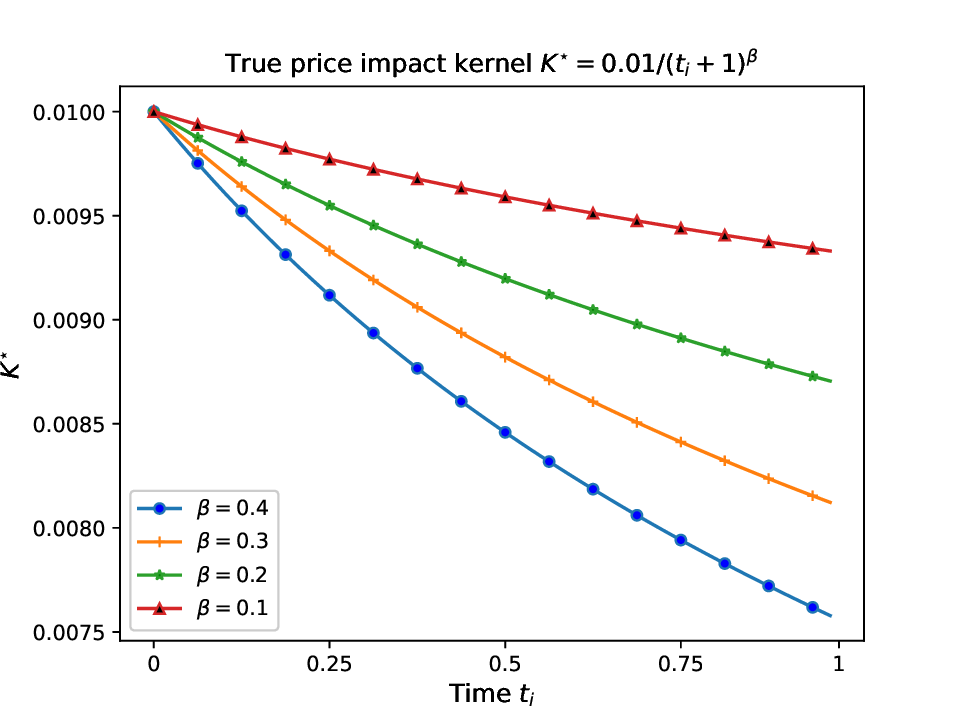}  
    \caption{}
    \label{SUBFIGURE LABEL 1}
\end{subfigure}
\begin{subfigure}{.46\textwidth}
    \centering
 \includegraphics[width=1\linewidth]{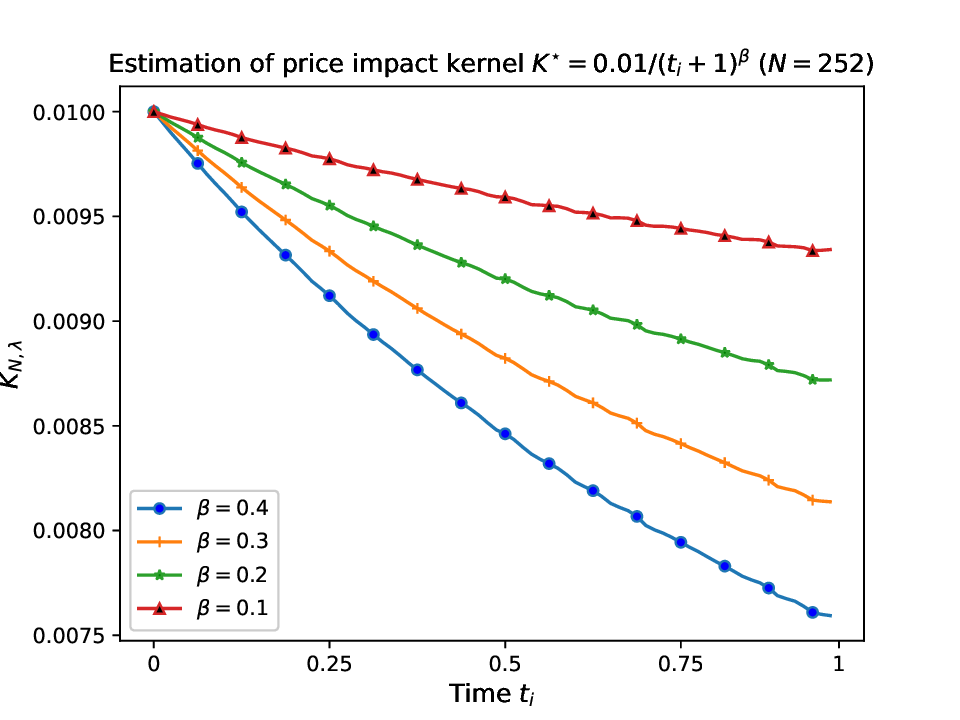}  
    \caption{}
    \label{SUBFIGURE LABEL 3}
\end{subfigure}
\begin{subfigure}{.46\textwidth}
    \centering
    \includegraphics[width=1\linewidth]{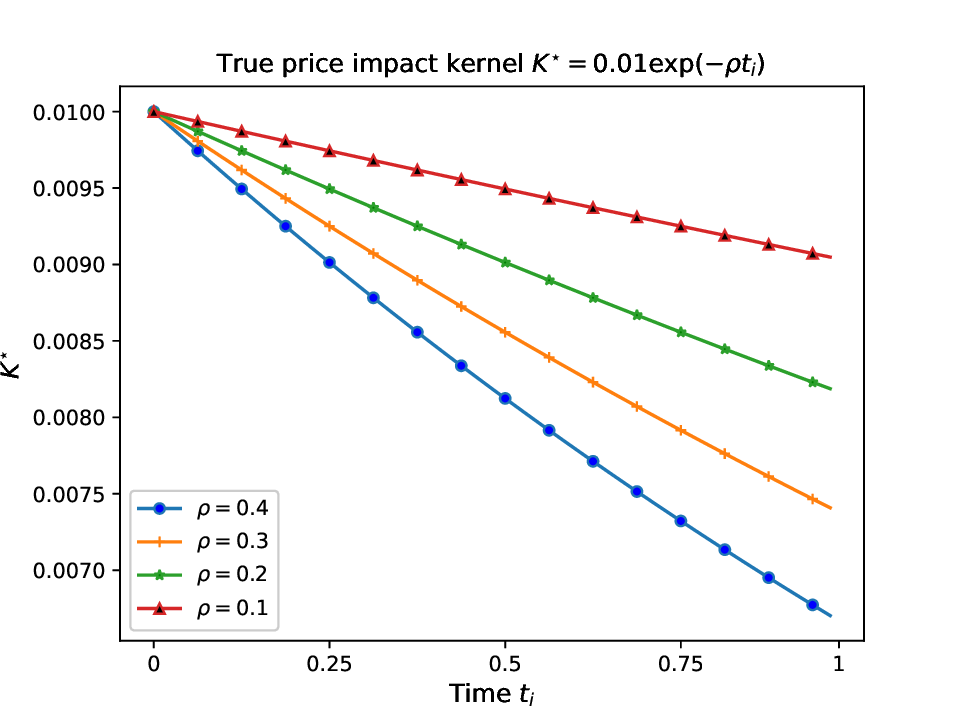}  
    \caption{}
    \label{SUBFIGURE LABEL 2}
\end{subfigure}
\begin{subfigure}{.46\textwidth}
    \centering
    \includegraphics[width=1\linewidth]{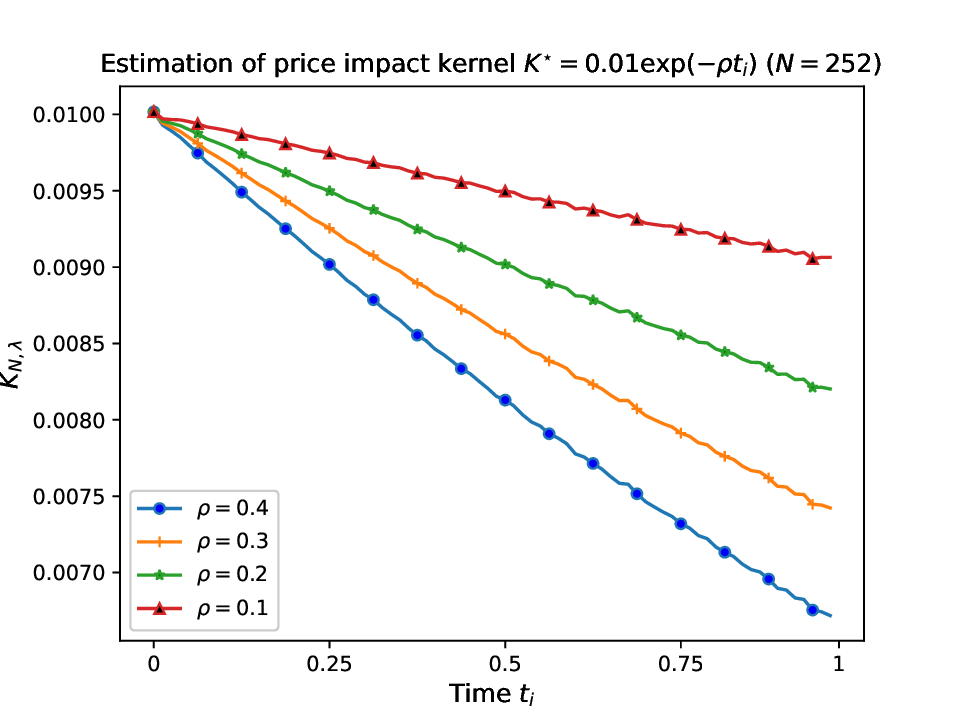}  
    \caption{}
    \label{SUBFIGURE LABEL 4}
\end{subfigure}
\caption{Comparison of the true price impact kernels to the estimated kernels in the cases of $K(t) = \kappa (t+1)^{-\beta}$ (upper panels) and $K(t) = \kappa e^{-\rho t}$ (lower panels). The sample size is $N=252$.} 
\label{FIGURE LABEL}
\end{figure}

 \begin{table}[!ht]
\caption{Accuracy of the convolution estimators
\eqref{eq:project_admissible_conv} and \eqref{w-def}
for different sample sizes} 
\centering 
\begin{tabular}{c | c | c | c | c} 
\hline\hline 
 &
 \multicolumn{2}{c|}{Power-law kernel} 
 & \multicolumn{2}{c}{Exponential   kernel} 
 \\ [0.5ex]
\hline
Sample size $N$  & 
$\textrm{err}^{(1)}$ & $\textrm{err}_{\textrm{proj}}^{(1)}$ &  $\textrm{err}^{(2)}$ 
& $\textrm{err}_{\textrm{proj}}^{(2)}$ \\ [0.5ex]
\hline
63  & $ 8.4 \times 10^{-3} $  & $  1.9 \times 10^{-3}$ & $ 9.8 \times 10^{-3} $ & $ 4.4 \times 10^{-3}$ \\
126  & $ 6.9 \times  10^{-3}$ &  $1.7 \times  10^{-3}$  & $ 5.3 \times  10^{-3}$ & $ 2.9 \times 10^{-3}$ \\
252 & $ 4.2 \times 10^{-3}$   & $ 1.2 \times 10^{-3}$  & $ 4.2 \times 10^{-3}$ &  $ 2. 2\times 10^{-3}$  \\
\hline 
\end{tabular}\label{table:2} 
\end{table}

Given the above estimated models, we proceed to investigate the performance of the pessimistic trading strategy. Our numerical results show that
the performance 
of a naive greedy strategy using the estimated model is very sensitive to the quality of the estimated model. 
In contrast, 
the pessimistic   trading strategy exhibits  a stable performance regardless of the accuracy of the estimated models, 
and achieves an execution cost close to the optimal one.

In particular, 
recall that given the  present dataset, 
the Volterra estimator \eqref{eq:G_n_lambda_volterra} 
yields a poor estimated  propagator 
as 
illustrated in Figure \ref{FIGURE LABEL_d}. 
As a result, 
a naive deployment of a greedy strategy as in Theorem \ref{prop-opt-strat}, using the estimator $G_{N,\lambda}$,  can lead to  substantially suboptimal  costs as discussed after \eqref{sub-decom}.
To illustrate this phenomenon clearly, 
 we continue with the aforementioned example 
 (for the 
power-law kernel  \eqref{g-power} with $\kappa=0.01$ and $\beta^\star= 0.4$)
 and present in Figure \ref{FIGURE LABEL_c} the trading speeds and inventories for the relevant strategies, where we neutralize the effects of exogenous trading signals on these strategies. Specifically, we plot the optimal strategy with precise information on the propagator \eqref{g-power}, the greedy strategy from Theorem \ref{prop-opt-strat} using the estimator $G_{N,\lambda}$ in \eqref{eq:G_n_lambda_volterra}, and the pessimistic strategy minimizing the cost functional \eqref{j-p1}, using the same $G_{N,\lambda}$ (with $N=252$ and $\lam = 10^{-3}$). We observe in Figure \ref{FIGURE LABEL_c} that the greedy strategy exhibits an uninterpretable behaviour as a result of oscillation in the estimator, and that these oscillations are regularized by the pessimistic strategy. We further report that the optimal costs using the true propagator $G^\star$ with zero signal, defined in Theorem \ref{prop-opt-strat} attains the value $4500.24$. Executing a greedy strategy as in Theorem \ref{prop-opt-strat} but with the estimated propagator $G_{N,\lambda}$ yields excessive costs $5216.68$, while for the pessimistic strategy with $G_{N,\lambda}$, the execution costs are significantly closer to optimality, $4537.19$ (see Table \ref{table:5}). 
 
On the other hand, 
  when  the convolution estimator 
  \eqref{eq:project_admissible_conv}  is 
  employed to estimate the propagator, 
  both the greedy strategy and the pessimistic strategy
  yield a close-to-optimal expectation cost. 
  This is due to the fact that 
the estimator \eqref{eq:project_admissible_conv} recovers the true kernel accurately,  
and hence 
stability properties of the associated optimal liquidation problem imply that
% the greedy strategy will be close to the minimiser of the cost functional \eqref{cost} and 
introducing a regularization in the cost functional as in \eqref{eq:K_n_lambda}, will not provide a significant improvement.
However, 
it is important to note that 
in practice, 
the true propagator  and  the  accuracy of an    estimated model are  unknown to the agent, 
and hence compared with the pessimistic strategy,  it is more challenging to 
assess   the performance of a greedy strategy before its real deployment (see the discussion after  \eqref{sub-decom}).  

\begin{figure}[!ht]
\centering
\includegraphics[width=0.48\linewidth]{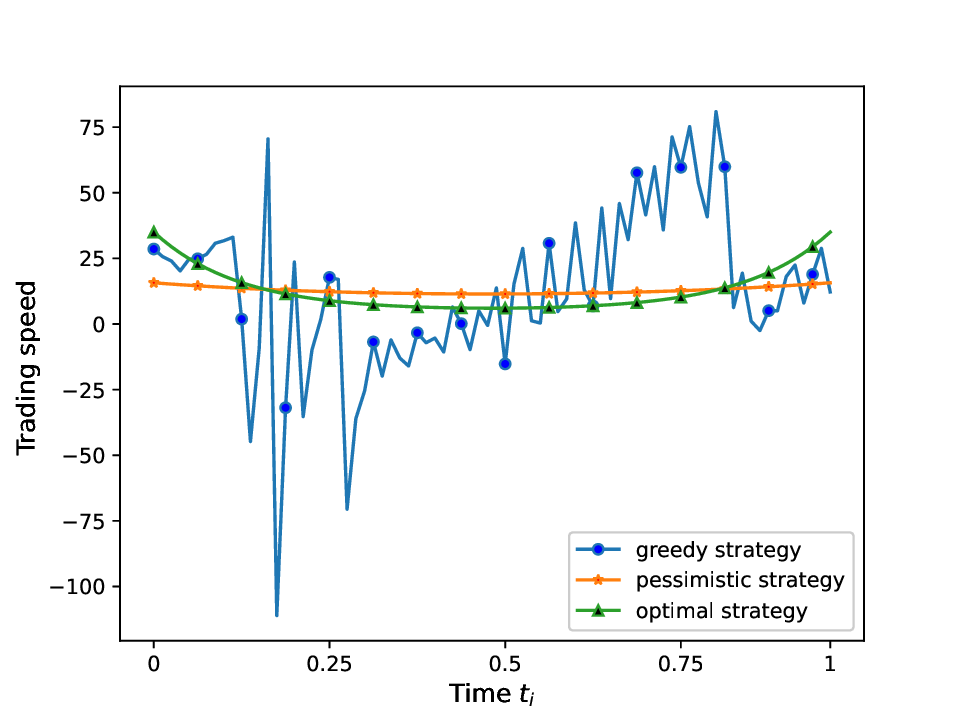}  \includegraphics[width=0.48\linewidth]{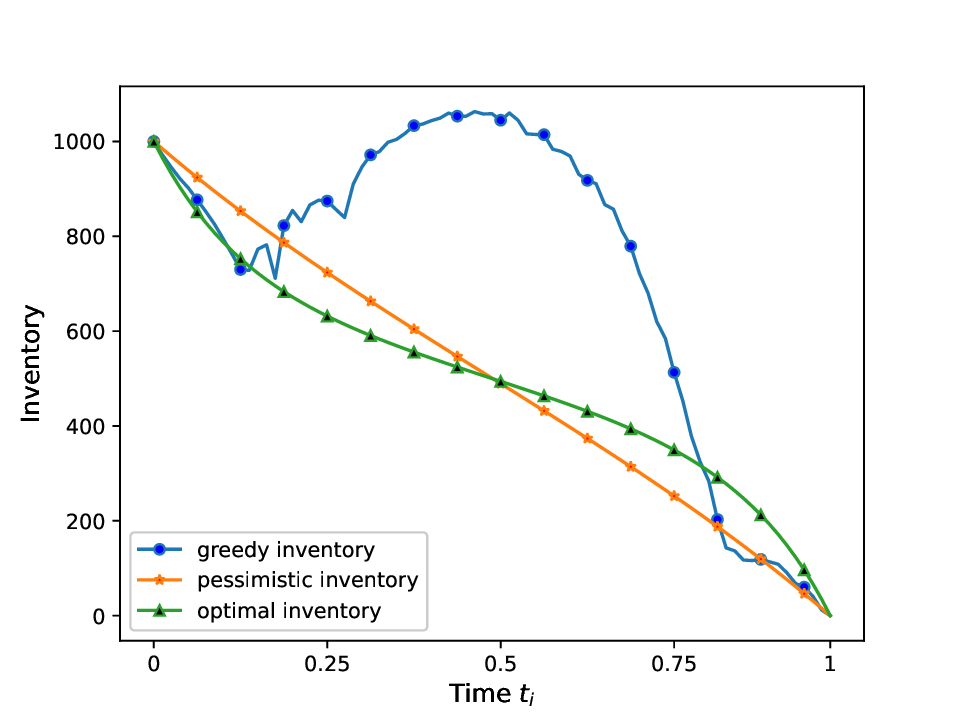}  
\caption{Trading speeds (left panel) and inventories (right panel) of the optimal trading strategy (green), greedy strategy (blue) and pessimistic strategy (orange).}
\label{FIGURE LABEL_c}
\end{figure}

\begin{table}[!ht]
\caption{Liquidation Costs} 
\centering 
\begin{tabular}{|c | c |} 
\hline\hline 
Type of strategy & liquidation costs    \\ [0.5ex]
\hline
Optimal strategy ($G^\star$)  & $ 4500.24 $   \\
Greedy strategy ($G_{N,\lambda}$) &  $5216.68$  \\
Pessimistic strategy ($G_{N,\lambda}$) & $4537.19$  \\
 \hline 
\end{tabular}\label{table:5} 
\end{table}

\section{Martingale tail inequality for least-squares estimation} \label{sec-mart} 
This section first establishes a martingale tail inequality for noise in a general finite dimensional Hilbert space. Based on this tail inequality, we derive high probability bounds for least-squares estimators resulting from correlated observations. The results of this section are of independent interest and extend the results from \cite{abbasi2011improved} from observation taking values in $\mathbb{R}$ to observations taking values in a general finite dimensional Hilbert space, which will be needed in order to prove Theorems \ref{thm:confidence_volterra} and \ref{thm:confidence_convolution}. 

We introduce some definitions and notation which are relevant to our setting. 
\paragraph{Notation:} For all real finite dimensional Hilbert spaces $(X,\langle \cdot,\cdot\rangle_X)$
and $(Y,\langle \cdot,\cdot\rangle_Y)$,
we denote by 
$\id_X$ the identity map on $X$,
and by
$\mathcal{L}_2(X,Y)$ 
be the space of  linear  maps 
$A: X\to Y$  equipped with the 
Hilbert-Schmidt 
norm  $\|A\|_{\mathcal{L}_2}
=\sqrt{\sum_{k=1}^{n_x} \|Ae_k\|^2_{Y}}<\infty$,
where 
$(e_k)_{k=1}^{n_x}$
is an orthonormal basis of $X$ of dimension $n_x$.
The norm $\|\cdot\|_{\mathcal{L}_2}$
is induced by
an  inner product 
$\langle \cdot,\cdot\rangle_{\mathcal{L}_2}$
such that 
$
\langle A,A'\rangle_{\mathcal{L}_2}
=\sum_{k=1}^{n_x} 
\langle Ae_k,A'e_k\rangle_{Y}
$ for all  $A,A'\in \mathcal{L}_2(X,Y)$.
Both $\|\cdot\|_{\mathcal{L}_2}$
and $\langle \cdot,\cdot\rangle_{\mathcal{L}_2}$ do not depend on 
 the choice of  
the    basis $(e_k)_{k=1}^{n_x}$ of $X$. 
We 
 write   $\mathcal{L}_2 (X)=\mathcal{L}_2 (X,X)$
 for simplicity.

 For each  $A\in \mathcal{L}_2(X,Y)$, we denote by $A^*:Y\to X$
  the adjoint  of $A$,
and say 
   $A$ is symmetric if $A=A^*$.
   We denote by $\mathcal{S}_0(X)$
   the space of symmetric linear  maps $A :X\to X$
   satisfying $\langle Ax, x\rangle_X\ge 0$ for all $x\in X$,
   and by
   $\mathcal{S}_+(X)$
   the space of symmetric linear  maps $A :X\to X$
   satisfying   $\langle Ax, x\rangle_X> 0$ for all $x\in X$ and $x\not =0$.
For any $A\in \mathcal{S}_0(X)  $,
we define the seminorm $\|\cdot\|_{X,A}:X\to [0,\infty)$ by
$\|u\|_{X,A}=\sqrt{\langle Au, u\rangle_X}$
for all $u\in X$.
We  write $\|\cdot\|_{X}=\|\cdot\|_{X,\id_X}$ for simplicity. 

The following definition introduces  conditional sub-Gaussian random variables. 
\begin{definition}
\label{def:conditional_subgaussian}
Let $(\Omega, \mathcal{F},\mathbb{P})$ be a probability space, let $(H,\langle\cdot, \cdot\rangle_H)$ be a finite dimensional real Hilbert space,
    and let $Z:\Omega\to H$ 
 be a  mean zero
     random variable.
     Let $\mathcal{G}\subset \mathcal{F}$ be sub-sigma field 
     and $\alpha \ge 0$ a constant. 
     We say 
     $Z$  is $\alpha$-conditionally sub-Gaussian 
     with respect to   $\mathcal{G}$  
if
\be \label{cond-prop} 
\mathbb{E}\left[\exp\left({\langle u, Z \rangle_H}\right)\mid \mathcal{G}\right]
\le \exp\left( \frac{\alpha^2\|u\|^2_H  }{2}\right),
\quad u\in H.
\ee
\end{definition}

%

% {\color{blue}
% \begin{Setting}
% \label{setting:online_regression_2}
% Let $X$ and $Y$ be  real finite dimensional  Hilbert spaces, and $(\Omega, \mathcal{F},\mathbb{P})$ be a   probability space.
% Let $\mathbb{G}=(\mathcal{G}_i)_{i=0}^\infty$ be a filtration,
% $(x_i)_{i\in \mathbb{N}}$ be an $X$-valued process 
% predictable   with respect to $\mathbb{G}$,
% and 
% $(y_i)_{i\in \mathbb{N}}$ be an $Y$-valued process 
% adapted   with respect to $\mathbb{G}$.
% Assume that there exists $T^\star \in \mathcal{L}_2(X,Y)$ and $R\ge 0$ such that 
% for all $i\in \mathbb{N}$,
% $$
% y_i = T^\star x_i+\eps_i,
% $$
% where 
% $\mathbb{E}[\eps_i\mid \mathcal{G}_{i-1}]=0$, and 
% $\eps_i$ is $R$-conditional sub-Gaussian  with respect to $\mathcal{G}_{i-1}$ (cf.~Definition
% \ref{def:conditional_subgaussian}).
% \end{Setting}

% For each 
% $n \in \mathbb{N}$ and 
% $\tau>0$,  consider the minimiser of the following regularised loss functional
% \begin{equation}
% \label{eq:x_n_lambda_pi_2}
% T_{n,\tau}\coloneqq\argmin_{T \in \mathcal{L}_2(X,Y)}
% \sum_{i=1}^n \|y_i-T x_i\|^2_{Y}+\tau\|T\|_{\mathcal{L}_2}^2.
% \end{equation}
% The first-order condition for \eqref{eq:x_n_lambda_pi_2} implies that 
% $\sum_{i=1}^n (T_{n,\tau}x_i-y_i) \otimes x_i+\tau T_{n,\tau}=0$,
% from which one can characterise $T_{n,\tau}\in T \in \mathcal{L}_2(X,Y)$ as:
% \begin{equation}
%  \label{eq:ls_first_order_pi_2}   T_{n,\tau}= \left(\sum_{i=1}^n y_i \otimes x_i  \right)\left(\sum_{i=1}^n x_i \otimes x_i +\tau \id_{X}\right)^{-1}.
% \end{equation}
% }

We first extend the martingale tail inequality with scalar noise, which was introduced in 
\cite[Theorem 1]{abbasi2011improved}, to noise
in a general finite dimensional Hilbert space. 
\paragraph{Our setting:} For the remainder of this section we fix $X$ and $Y$ to be real finite dimensional Hilbert spaces and let
$(\Omega, \mathcal{F},\mathbb{P})$ be a probability space with a filtration
$\mathbb{G}=(\mathcal{G}_i)_{i=0}^\infty\subset \mathcal{F}$. We define 
$(A_i)_{i\in \mathbb{N}}$ as a $\mathcal{L}_2(X,Y)$-valued process 
predictable with respect to $\mathbb{G}$, and the noise process $(\eta_i)_{i\in \mathbb{N}}$ as a $Y$-valued  process 
adapted to $\mathbb{G}$
such that 
 $$
 \mathbb{E}[\eta_i\mid \mathcal{G}_{i-1}]=0, \quad  \textrm{for all } i \geq 1, 
 $$ and 
$\eta_i$ is $\alpha$-conditionally sub-Gaussian  with respect to $\cG_{i-1}$ 
(cf.~Definition \ref{def:conditional_subgaussian}). 

We further define the following stochastic processes.  Let  $M_{n}:
\Omega \to \mathcal{S}_0(X) $
and 
 $U_n:  \Omega \to X$ be such that for all $\omega\in \Omega$,
\be \label{m-proc} 
M_{n}(\omega) 
=\sum_{i=1}^n   A_i^*(\omega)A_i(\omega), \quad n \geq 1, 
\ee
and 
\be \label{u-proc} 
U_n (\omega)   =\sum_{i=1}^n A^*_i(\omega)\eta_i(\omega),  \quad n \geq 1. 
\ee
 
Now we are ready to introduce our martingale tail inequality for noise
in a general finite dimensional Hilbert space. 
\begin{theorem} \label{thm:martingale_tail}
For all   $V\in \mathcal{S}_+(X)$ and $\delta \in (0,1)$ we have 
$$
\mathbb{P}\left( \|U_{n}\|^2_{X,  (M_{n}+V)^{-1}}
\le  2\alpha^2\log\left(
\frac{\sqrt{\det \left( V^{-1} M_{n }+\id_X \right)}}{\delta}
\right),
\quad \forall n\in \mathbb{N}
\right)\ge 1-\delta,
$$
where $\det()$ denotes the Fredholm determinant (cf.~\cite[Chapter VII]{gohberg1990classes}).
\end{theorem}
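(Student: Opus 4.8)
The plan is to adapt the method of mixtures of \cite{abbasi2011improved} to the present Hilbert-space-valued noise: I would build a one-parameter family of exponential supermartingales indexed by a test direction $\lambda\in X$, and then average them against a Gaussian prior whose covariance is calibrated so that the averaged process has a closed form involving exactly $(M_n+V)^{-1}$ and $\det(V^{-1}M_n+\id_X)$. Concretely, for fixed $\lambda\in X$ I would set
\[
D_n^\lambda := \exp\left( \langle \lambda, U_n\rangle_X - \tfrac{\alpha^2}{2}\|\lambda\|_{X,M_n}^2 \right), \qquad D_0^\lambda = 1.
\]
Since $A_n$ is predictable, $A_n\lambda\in Y$ is $\mathcal{G}_{n-1}$-measurable, and by \eqref{m-proc}--\eqref{u-proc} the one-step increments are $\langle\lambda,U_n\rangle_X-\langle\lambda,U_{n-1}\rangle_X=\langle A_n\lambda,\eta_n\rangle_Y$ and $\|\lambda\|_{X,M_n}^2-\|\lambda\|_{X,M_{n-1}}^2=\|A_n\lambda\|_Y^2$. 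Applying the conditional sub-Gaussian estimate \eqref{cond-prop} with $u=A_n\lambda$ then yields $\mathbb{E}[D_n^\lambda\mid\mathcal{G}_{n-1}]\le D_{n-1}^\lambda$, so $(D_n^\lambda)_{n\ge0}$ is a nonnegative supermartingale with $\mathbb{E}[D_n^\lambda]\le1$.

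Next I would average over $\lambda$ against the centred Gaussian $h$ on $X$ with covariance $\alpha^{-2}V^{-1}$, setting $\bar D_n:=\int_X D_n^\lambda\,dh(\lambda)$. By Tonelli (all integrands nonnegative), $\bar D_n$ is $\mathcal{G}_n$-measurable and is again a nonnegative supermartingale with $\mathbb{E}[\bar D_0]=1$. Because $X$ is finite-dimensional, choosing an orthonormal basis turns the average into a finite-dimensional Gaussian integral; completing the square with $P:=\alpha^2 M_n + \alpha^2 V = \alpha^2(M_n+V)\in\mathcal{S}_+(X)$ gives the explicit form
\[
\bar D_n = \big(\det(\id_X+V^{-1}M_n)\big)^{-1/2}\exp\left(\frac{1}{2\alpha^2}\|U_n\|_{X,(M_n+V)^{-1}}^2\right),
\]
where the Fredholm determinant reduces to the ordinary determinant in finite dimensions. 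The calibration of the prior covariance to $\alpha^{-2}V^{-1}$ is precisely what makes the quadratic form collapse to $(M_n+V)^{-1}$ with constant $\tfrac{1}{2\alpha^2}$ and the prefactor to $\det(\id_X+V^{-1}M_n)^{-1/2}$.

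Finally I would invoke Ville's maximal inequality for nonnegative supermartingales, $\mathbb{P}(\sup_{n}\bar D_n\ge 1/\delta)\le \delta\,\mathbb{E}[\bar D_0]\le\delta$; rewriting the event $\{\bar D_n\ge1/\delta\}$ gives exactly $\|U_n\|_{X,(M_n+V)^{-1}}^2\ge 2\alpha^2\log(\sqrt{\det(V^{-1}M_n+\id_X)}/\delta)$, and passing to the complement delivers the $1-\delta$ bound uniformly in $n$. The main obstacle is this last uniform-in-$n$ step rather than the Gaussian computation: one must upgrade the fixed-$n$ bound $\mathbb{E}[\bar D_n]\le1$ to control of $\sup_n\bar D_n$, which (as in \cite{abbasi2011improved}) is most safely carried out by a stopping-time argument, applying optional stopping or Fatou to $\bar D_n$ stopped at the first time the bad event occurs, together with verifying integrability of the mixture (finiteness of $\bar D_n$ follows from $M_n\in\mathcal{S}_0(X)$ and $P\in\mathcal{S}_+(X)$) and its measurability.
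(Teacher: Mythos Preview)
Your proposal is correct and follows essentially the same method-of-mixtures argument as the paper: both build the exponential supermartingale indexed by a test direction (the paper's $G_n(u)=\exp(\langle u,U_n\rangle_X/\alpha-\tfrac12\langle M_nu,u\rangle_X)$ is your $D_n^\lambda$ under the change of variables $u=\alpha\lambda$), average against a Gaussian with covariance proportional to $V^{-1}$ to obtain the closed form involving $\det(V^{-1}M_n+\id_X)^{-1/2}$, and then pass to a uniform-in-$n$ bound via a stopping-time/Markov argument (which is exactly Ville's inequality). The only difference is organizational: the paper first establishes $\mathbb{E}[G_\tau(u)]\le 1$ for arbitrary stopping times (Lemma~\ref{lemma:super-martingale}) and then averages at $\tau$ (Proposition~\ref{prop:tail_stopping}), whereas you average first and then apply Ville to the mixture $\bar D_n$; both routes are standard and equivalent.
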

In order to prove Theorem \ref{thm:martingale_tail} we introduce the following auxiliary results.

\begin{lemma}\label{lemma:super-martingale}
Let $(M_n)_{n \in \mathbb{N}}$ and $(U_n)_{n \in \mathbb{N}}$ as in \eqref{m-proc} and \eqref{u-proc}. Then for all  
$u\in X$
and $\mathbb{G}$-stopping time $\tau$,
$$
G_{\tau} (u)=\exp\left( \frac{\langle u, U_{\tau}\rangle_{X}}{\alpha}-\frac{1}{2}\langle M_{\tau} u, u\rangle_{X}\right)
$$ 
is well-defined 
and satisfies 
$\mathbb{E}[G_{\tau}(u)]\le 1$.
\end{lemma}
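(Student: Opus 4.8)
The plan is to fix $u \in X$ and exhibit $n \mapsto G_n(u)$ as a nonnegative supermartingale with respect to the filtration $\mathbb{G}$, normalised so that $G_0(u) = 1$, and then pass to the stopping time $\tau$ by optional stopping. The starting point is the multiplicative decomposition $G_n(u) = \prod_{i=1}^n D_i(u)$, where
$$
D_i(u) := \exp\left( \frac{\langle A_i u, \eta_i\rangle_Y}{\alpha} - \frac{1}{2}\|A_i u\|_Y^2 \right).
$$
This follows from the definitions \eqref{m-proc}--\eqref{u-proc} together with the adjoint identities $\langle u, A_i^*\eta_i\rangle_X = \langle A_i u, \eta_i\rangle_Y$ and $\langle A_i^* A_i u, u\rangle_X = \|A_i u\|_Y^2$. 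Since $U_0 = 0$ and $M_0 = 0$, we have $G_0(u) = 1$. (Here I assume $\alpha>0$; if $\alpha=0$ then \eqref{cond-prop} forces $\eta_i\equiv 0$ and the claim is immediate.)

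Next I would verify the one-step estimate $\mathbb{E}[D_n(u) \mid \mathcal{G}_{n-1}] \le 1$. Because $(A_i)$ is predictable, $A_n u$ is $\mathcal{G}_{n-1}$-measurable, so the deterministic-given-$\mathcal{G}_{n-1}$ factor $\exp(-\tfrac12\|A_n u\|_Y^2)$ may be pulled out, leaving $\mathbb{E}[\exp(\langle A_n u/\alpha,\, \eta_n\rangle_Y)\mid \mathcal{G}_{n-1}]$. Applying the $\alpha$-conditional sub-Gaussian bound \eqref{cond-prop} of Definition \ref{def:conditional_subgaussian} with the vector $v = A_n u/\alpha \in Y$ bounds this by $\exp(\tfrac{\alpha^2}{2}\|A_n u/\alpha\|_Y^2) = \exp(\tfrac12\|A_n u\|_Y^2)$, and the two exponentials cancel. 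Since $G_{n-1}(u)$ is nonnegative and $\mathcal{G}_{n-1}$-measurable, it follows that $\mathbb{E}[G_n(u)\mid \mathcal{G}_{n-1}] = G_{n-1}(u)\,\mathbb{E}[D_n(u)\mid\mathcal{G}_{n-1}]\le G_{n-1}(u)$, so $(G_n(u))_{n\ge 0}$ is a nonnegative supermartingale with $\mathbb{E}[G_n(u)]\le 1$ for every $n$.

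Finally, to reach the stopping time $\tau$, I would first note that the nonnegative supermartingale $(G_n(u))$ converges almost surely to an integrable limit $G_\infty(u)$ by the supermartingale convergence theorem, which is exactly what makes $G_\tau(u)$ well-defined on $\{\tau = \infty\}$. Applying the optional stopping theorem to the bounded stopping time $\tau \wedge n$ gives $\mathbb{E}[G_{\tau\wedge n}(u)] \le \mathbb{E}[G_0(u)] = 1$, and Fatou's lemma then yields $\mathbb{E}[G_\tau(u)] = \mathbb{E}[\liminf_n G_{\tau\wedge n}(u)] \le \liminf_n \mathbb{E}[G_{\tau\wedge n}(u)] \le 1$, as claimed.

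The main technical subtlety, and the step I would write out most carefully, is the application of \eqref{cond-prop} with the \emph{random} vector $v = A_n u/\alpha$ in place of a fixed element of $Y$. Since \eqref{cond-prop} is stated for each deterministic $v$, extending it to the $\mathcal{G}_{n-1}$-measurable $A_n u/\alpha$ requires a standard regular-conditional-distribution argument (or approximation by simple $\mathcal{G}_{n-1}$-measurable functions, using continuity of both sides over a countable dense subset of $Y$ to upgrade the almost-sure inequality to hold simultaneously). This is routine but is the only place where measurability of the integrand interacts nontrivially with the conditioning, so I would record it explicitly for completeness.
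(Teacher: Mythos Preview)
Your proposal is correct and follows essentially the same route as the paper: the multiplicative decomposition $G_n(u)=\prod_{i=1}^n D_i(u)$, the one-step bound $\mathbb{E}[D_n(u)\mid\mathcal{G}_{n-1}]\le 1$ via the sub-Gaussian property, and the passage to $\tau$ by optional stopping plus Fatou. Your explicit remark about the measurability subtlety when applying \eqref{cond-prop} with the $\mathcal{G}_{n-1}$-measurable vector $A_n u/\alpha$ is a welcome addition that the paper leaves implicit.
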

\begin{proof}
    For any $u\in X$ we define, 
  \be \label{g-n-def} 
  G_{n} (u)=\exp\left( \frac{\langle u, U_{n}\rangle_{X}}{\alpha}-\frac{1}{2}\langle M_{n} u, u\rangle_{X}  \right), \quad n \geq 1, 
  \ee
  with  $G_{0}(u)=1$.
From \eqref{m-proc} and \eqref{u-proc} we get, 
$$G_{n}(u)= \exp\left( \sum_{i=1}^{n} \left[ \frac{1}{\alpha}\langle u, A_{i}^*   \eta_i \rangle_{X}-\frac{1}{2} \langle  A_{i}^*A_{i}   u, u \rangle_{X}   \right] \right)
=\prod_{i=1}^n D_i, \quad \textrm{for all }n \geq 1, 
$$
with 
\begin{equation*}
    D_i \coloneqq  \exp\left(  \frac{1}{\alpha}\langle A_{i} u, \eta_i  \rangle_{Y}-\frac{1}{2} \langle    A_{i} u, A_{i} u \rangle_{Y} 
 \right).
\end{equation*}
Since $A_{i} $ is $\mathcal{G}_{i-1}$ measurable,  and $\eta_i$ is  $\alpha$-conditional sub-Gaussian with respect to $\mathcal{G}_{i-1}$ we get from \eqref{cond-prop}, 
\begin{align*}
   \mathbb{E}[D_i|\mathcal{G}_{i-1}]
   &=\mathbb{E}\left[\exp \left( 
    \frac{1}{\alpha}\langle A_{i} u, \eta_i  \rangle_{Y} 
 \right)\bigg|\mathcal{G}_{i-1}\right]
 \exp\left( -\frac{1}{2}  \langle    A_{i} u, A_{i} u \rangle_{Y} 
 \right)
 \\
 &\le 
 \exp\left( \frac{1}{2}\left  \| A_{i} u \right \|_{Y} 
 \right)\exp\left( -\frac{1}{2} \|A_{i} u\|_Y^2 
 \right)=1.
\end{align*}
Thus by the measurability of   $D_i$ with respect to $\mathcal{G}_i$,
we get for all $n \geq 1$,
\begin{align*}
    \mathbb{E}[G_{n}(u )|\mathcal{G}_{n-1}] =\left(\prod_{i=1}^{n-1}D_i  \right) \mathbb{E}[D_n|\mathcal{G}_{n-1}] = G_{n-1}(u) \mathbb{E}[D_n|\mathcal{G}_{n-1}]
    \le G_{n-1}(u),
\end{align*}
hence $(G_{n}(u))_{n \geq 0 }$ is a super-martingale with respect to $\mathbb{G}$. Using the fact that $G_{0}(u)=1$, we deduce
$\mathbb{E}[G_{n}(u)] \leq \mathbb{E}[G_{n-1}(u)]\le  1$ for all $n\in \mathbb{N}$.

Now  let $\tau$ be a stopping time with respect to the filtration $\mathbb{G}$. Then $(G_{n \land \tau }(u))_{n \in \mathbb{N}}$ is a nonnegative super-martingale. By  the optional stopping theorem,   $\mathbb{E}[G_{n \land \tau}(u)] \leq 1$ for any $n \in \mathbb{N}$. By Doob's martingale convergence theorem,  $G_{\tau}(u)=\lim_{n \to \infty} (G_{n \land \tau}(u))$ exists and is finite a.s., hence along with Fatou's lemma it follows that $\mathbb{E}[G_{\tau}(u)] \leq 1$.
\end{proof}

 \begin{proposition} \label{prop:tail_stopping}
   Let $(M_n)_{n \in \mathbb{N}}$ and $(U_n)_{n \in \mathbb{N}}$ as in \eqref{m-proc} and \eqref{u-proc}. Then, for
     any $\mathbb{G}$-stopping times $\tau$, $V\in \mathcal{S}_+(X)$ and $\delta\in (0,1)$,
     \begin{align*}
    \mathbb{P}\left(
    \exp\left( 
\frac{1}{2\alpha^2} \|U_{\tau}\|^2_{X,  (M_{\tau}+V)^{-1}}\right)
\ge \delta^{-1}\sqrt{\det \left( V^{-1} M_{\tau }+\id_X \right)}
    \right)
  &  \le   \delta^{-1}.
\end{align*}

 \end{proposition}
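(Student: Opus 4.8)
The plan is to use the \emph{method of mixtures} (pseudo-maximisation) of Abbasi-Yadkori et al., adapted to the present Hilbert-space setting. Lemma \ref{lemma:super-martingale} already delivers, for each \emph{fixed} $u\in X$, the bound $\mathbb{E}[G_{\tau}(u)]\le 1$. A single $u$ is not enough to control $\|U_{\tau}\|^2_{X,(M_{\tau}+V)^{-1}}$, because the maximising choice $u=\alpha^{-1}(M_{\tau}+V)^{-1}U_{\tau}$ is random and not predictable. I would therefore average $G_{\tau}(\cdot)$ over $u$ against a centred Gaussian prior whose covariance is exactly $V^{-1}$, converting the whole family of scalar supermartingale bounds into one integrated inequality.

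Concretely, fix an orthonormal basis of $X$ to identify it with $\mathbb{R}^{n}$, $n=\dim X$, and let $h(\d u)=\frac{(\det V)^{1/2}}{(2\pi)^{n/2}}\exp\!\left(-\tfrac12\langle Vu,u\rangle_X\right)\d u$ be the law of the centred Gaussian with covariance $V^{-1}$, which is well defined since $V\in\mathcal S_+(X)$. Define the random variable $\bar G_{\tau}=\int_X G_{\tau}(u)\,h(\d u)$. Since $(\omega,u)\mapsto G_{\tau}(u)(\omega)$ is nonnegative and jointly measurable, Tonelli's theorem permits exchanging the order of integration, giving
$$
\mathbb{E}[\bar G_{\tau}]=\int_X \mathbb{E}[G_{\tau}(u)]\,h(\d u)\le \int_X 1\,h(\d u)=1 .
$$

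Next I would evaluate $\bar G_{\tau}$ in closed form, pointwise in $\omega$. Writing $P=M_{\tau}+V\in\mathcal S_+(X)$ and collecting the terms in $u$, the exponent of the integrand is $-\tfrac12\langle Pu,u\rangle_X+\tfrac1\alpha\langle u,U_{\tau}\rangle_X$ up to the normalising constant of $h$. Completing the square about $\mu=\alpha^{-1}P^{-1}U_{\tau}$ turns this into $-\tfrac12\langle P(u-\mu),(u-\mu)\rangle_X+\tfrac1{2\alpha^2}\langle P^{-1}U_{\tau},U_{\tau}\rangle_X$, and the Gaussian integral in $u$ produces the factor $(2\pi)^{n/2}(\det P)^{-1/2}$. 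Using the identity $\det P=\det V\cdot\det(V^{-1}M_{\tau}+\id_X)$ and cancelling against the normalisation of $h$, I obtain
$$
\bar G_{\tau}=\frac{1}{\sqrt{\det\!\left(V^{-1}M_{\tau}+\id_X\right)}}\exp\!\left(\frac{1}{2\alpha^2}\|U_{\tau}\|^2_{X,(M_{\tau}+V)^{-1}}\right).
$$

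Finally, the event in the statement is exactly $\{\bar G_{\tau}\ge\delta^{-1}\}$, so Markov's inequality together with $\mathbb{E}[\bar G_{\tau}]\le 1$ gives $\mathbb{P}(\bar G_{\tau}\ge\delta^{-1})\le\delta\,\mathbb{E}[\bar G_{\tau}]\le\delta$, which in particular yields the displayed bound (and is in fact the sharper constant $\delta$). The main obstacle is the measure-theoretic bookkeeping of the Hilbert-space mixture: justifying the Tonelli exchange, where finiteness of the integrand relies on $M_{\tau}+V$ being \emph{strictly} positive so that the Gaussian integral converges; verifying joint measurability of $(\omega,u)\mapsto G_{\tau}(u)$; and evaluating the (Fredholm, here ordinary) determinant factor correctly while $M_{\tau}$ is random. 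The remaining complete-the-square computation is routine.
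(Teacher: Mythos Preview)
Your proposal is correct and takes essentially the same approach as the paper: the paper introduces an auxiliary Gaussian random variable $W$ with covariance $V^{-1}$, independent of $\mathcal{G}_\infty$, and defines $H_\tau=\mathbb{E}[G_\tau(W)\mid\mathcal{G}_\infty]$, which is precisely your $\bar G_\tau$ written as a conditional expectation rather than a Lebesgue integral against the Gaussian density; the complete-the-square computation and the Markov-inequality finish are identical. You also correctly observe that the argument actually yields the bound $\delta$ rather than the $\delta^{-1}$ stated (the latter being trivially true and evidently a typo carried through the paper's displayed proof as well).
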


\begin{proof}
As $X$ is a finite dimensional space and $V$ is symmetric and positive definite, it follows that 
$V$ is invertible, and $V^{-1}\in   \mathcal{S}_+(X)$.
 A singular value decomposition of $V^{-1}$ implies that 
there exists 
$(\lambda_i)_{i=1}^{n_x}\subset (0,\infty)$ 
and 
 an orthonormal basis  $(e_i)_{i=1}^{n_x}$ of $X$
 such that 
 \be \label{v-inv-dec} 
 V^{-1}x=\sum_{i=1}^{n_x} \lambda_i \langle x,e_i\rangle_X e_i, \quad \textrm{for all }
  x\in X.
  \ee
  Let 
$\mathcal{G}_\infty=\bigcup_{n=0}^\infty\mathcal{G}_n$,
and
  $(\xi_i)_{i=1}^{n_x}$ be  standard 
normal   random variables 
that are mutually independent and are independent of $\mathcal{G}_\infty$.
Define 
\be \label{def-w} 
W =\sum_{i=1}^{n_x} \sqrt{\lambda_i}\xi_ie_i \in L^2(\Omega; X).
\ee
Let $n\in \mathbb{N}$ be fixed, 
and further define 
\be \label{h-n-def} 
H_{n}=\mathbb{E}[G_{n}(W)\mid \mathcal{G}_\infty]. 
\ee
Then, since $Vx=\sum_{i=1}^{n_x} \lambda^{-1}_i \langle x,e_i\rangle_X e_i$ for all $x\in X$, we get from \eqref{g-n-def} and \eqref{def-w}, 
\be \label{h-bnd1} 
\begin{aligned}
 H_{n}
&=\mathbb{E}\left[\exp\left(
\frac{\langle W, U_{n}\rangle_{X}}{\alpha}-\frac{1}{2}\langle M_{n} W, W\rangle_{X}
\right)
    \,\bigg\vert\, \mathcal{G}_\infty
\right]
\\
& 
=\mathbb{E}\left[\exp\left(
\frac{\langle W, U_{n}\rangle_{X}}{\alpha}-\frac{1}{2}\langle (M_{n}+ V) W, W\rangle_{X}
+
 \frac{1}{2}\langle  V  W, W\rangle_{X}
\right)
    \,\bigg\vert\, \mathcal{G}_\infty
\right]
\\
& = 
\mathbb{E}\left[\exp\left(
\frac{\langle W, U_{n}\rangle_{X}}{\alpha}-\frac{1}{2}\langle (M_{n}+V) W, W\rangle_{X}
+
 \frac{1}{2}\sum_{i=1}^m \xi^2_i
\right)
    \,\bigg\vert\, \mathcal{G}_\infty
\right]
\\
& =
\mathbb{E}\left[\exp\left( 
\frac{1}{2} \left\|\frac{U_{n}}{\alpha}\right\|^2_{X, (M_{n}+V)^{-1}}
 -\frac{1}{2} 
 \left\| W-(M_{n}+V)^{-1}\frac{U_{n}}{\alpha}\right\|_{X, M_{n}+V}^2 
+\frac{1}{2} \sum_{i=1}^m \xi_{i}^2\right)
    \,\bigg\vert\, \mathcal{G}_\infty
\right],
\end{aligned}
\ee
where the last identity used 
     $M_{n}+V\in \mathcal{S}_+(X) $
     and the fact that
     for all  $u,v\in X$
     and   $A\in \mathcal{S}_+(X)$,   
 \begin{align*}
& 
 \langle u, v\rangle_{X} -\frac{1}{2}\langle A u, u\rangle_{X}
% &\quad =
% \langle (M_n+V^\dagger_n)^{-1}U_{n,m}, U_{n,m}\rangle_{X} -\frac{1}{2}\langle (M_n+V^\dagger_n) 
% \left(A-(M_n+V^\dagger_n)^{-1}U_{n,m}\right), A-(M_n+V^\dagger_n)^{-1}U_{n,m}\rangle_{X},
   =
 \frac{1}{2} \|v\|^2_{X, A^{-1}}
 -\frac{1}{2} 
 \| u-A^{-1}v\|_{X, A}^2.
  \end{align*}
 Since
$U_{n}$ and $M_{n}$ are measurable with respect to $\mathcal{G}_\infty$, 
$(  \xi_i)_{i=1}^{n_x}$ are standard Gaussian random variables  independent of $\mathcal{G}_\infty$, 
by writing 
  $\bar{M}_{n}=M_{n}+ V$ we get from \eqref{h-bnd1} and \eqref{v-inv-dec} that 
\begin{align} \label{eq:H_n_m}
\begin{split}
 H_{n} 
& =
\exp\left( 
\frac{1}{2\alpha^2} \|U_{n}\|^2_{X,  \bar{M}_{n}^{-1}}\right)
\mathbb{E}\left[\exp\left( 
 -\frac{1}{2} 
 \| \bar{M}^{1/2}_{n} (W_{n}-\alpha^{-1}\bar{M}_{n}^{-1}U_{n})\|_{X}^2 
+\frac{1}{2} \sum_{i=1}^{n_x} \xi_{i}^2\right)
    \,\bigg\vert\, \mathcal{G}_\infty
\right]
\\
&=
\frac{\exp\left( 
\frac{1}{2\alpha^2} \|U_{n}\|^2_{X,  \bar{M}_{n}^{-1}}\right)}
{\sqrt{(2\pi)^{n_x}}}
\int_{\mathbb{R}^{n_x}}
 \exp\left( 
 -\frac{1}{2} 
\left\|  \bar{M}^{1/2}_{n}\left( \sum_{i=1}^{n_x} \sqrt{\lambda_i}x_ie_i  -  \alpha^{-1} \bar{M}_{n}^{-1}U_{n}\right)\right\|_{X}^2 
\right) \mathrm{d}x 
\\
& =\exp\left( 
\frac{1}{2\alpha^2} \|U_{n}\|^2_{X,  \bar{M}_{n}^{-1}}\right)
\sqrt{\det \left(V \bar{M}_{n}^{-1}\right)}
\\
&
=\left(
\frac{1}{\sqrt{\det(V^{-1}M_n+\id_X)}}
\right)\exp\left( 
\frac{1}{2\alpha^2} \|U_{n}\|^2_{X,  (M_{n}+V)^{-1}}\right),
\end{split}
\end{align}
  where the last identity used $\det(AB)=\det(A)\det(B)$.
  
Since  \eqref{eq:H_n_m} holds for all $n\in \mathbb{N}$, it also holds for all $\mathbb{G}$-stopping times $\tau$. 
By Lemma \ref{lemma:super-martingale} and \eqref{h-n-def},
for all $\mathbb{G}$-stopping times $\tau$,
$\mathbb{E}[H_{\tau}]
=\mathbb{E}[\mathbb{E}[G_{\tau}(W)\mid {\mathcal G_{\infty}}]]\le 1$.
Then for all $\delta\in (0,1)$, by Markov's inequality and $\mathbb{E}[H_{\tau}]\le 1$ we get, 
\begin{align*}
    \mathbb{P}\left(
    \frac{\exp\left( 
\frac{1}{2\alpha^2} \|U_{\tau}\|^2_{X,  (M_{\tau}+V)^{-1}}\right)}
{\delta^{-1}\sqrt{\det \left(V^{-1} M_{\tau}+  \id_{X} \right)}}
\ge 1
    \right)
  &  \le   \mathbb{E}\left[ 
    \frac{\exp\left( 
\frac{1}{2\alpha^2} \|U_{\tau}\|^2_{X,  (M_{\tau}+V)^{-1}}\right)}
{\delta^{-1}\sqrt{\det \left( V^{-1} M_{\tau}+  \id_{X} \right)}}
    \right]
    \\
 &=\delta^{-1}\mathbb{E}[H_{\tau}]\le \delta^{-1},
\end{align*}
which concludes the proof of Proposition \ref{prop:tail_stopping}. 
\end{proof}
Now we are ready to prove Theorem \ref{thm:martingale_tail}.
\begin{proof}[Proof of Theorem \ref{thm:martingale_tail}]
Let $\delta \in (0,1)$ be fixed.
  For each $n\in \mathbb{N}$,
  define the event 
  $$
B_n(\delta) = \left\{
\omega\in \Omega \,\bigg\vert\,
\exp\left( 
\frac{1}{2\alpha^2} \|U_{n}\|^2_{X,  (M_{n}+V)^{-1}}\right)
> \delta^{-1}\sqrt{\det \left( V^{-1} M_{n }+\id_X \right)}
\right\}.
  $$
Define the stopping time 
$\tau:\Omega\to\mathbb{N}$
such that 
$\tau(\omega)=\min\{n\in \mathbb{N}\mid \omega\in B_n(\delta)\}$ for all $\omega\in \Omega$.
Then by the identity 
$\bigcup_{n\in \mathbb{N}}B_n(\delta) =\{\tau <\infty\}$ and by Proposition \ref{prop:tail_stopping},
\begin{align*}
\mathbb{P}\left(\bigcup_{n\in \mathbb{N}}B_n(\delta)\right)
&=\mathbb{P}(\tau <\infty) \\
&=
 \mathbb{P}\left(
    \exp\left( 
\frac{1}{2\alpha^2} \|U_{\tau}\|^2_{X,  (M_{\tau}+V)^{-1}}\right)
> \delta^{-1}\sqrt{\det \left( V^{-1} M_{\tau }+\id_X \right)},  \, \tau<\infty
    \right)
\\
&\le 
\mathbb{P}\left(
    \exp\left( 
\frac{1}{2\alpha^2} \|U_{\tau}\|^2_{X,  (M_{\tau}+V)^{-1}}\right)
> \delta^{-1}\sqrt{\det \left( V^{-1} M_{\tau }+\id_X \right)} 
    \right)
\le \delta.
\end{align*}
This proves the desired estimate. 
\end{proof}

Based on Theorem \ref{thm:martingale_tail},
 we establish the following high probability bounds  of a projected least-squares estimator  based on correlated observations. Recall that the Hilbert spaces $X$, $Y$ and $(\Omega, \mathcal{F},\mathbb{P})$, the filtration $\mathbb{G}$ and the predictable process $(A_n)_{n \in \mathbb{N}}$ were defined before \eqref{m-proc} and that $(M_n)_{n \in \mathbb{N}}$ was introduced in \eqref{m-proc} and that $\alpha$-conditional sub-Gaussian random variables were defined in Definition \ref{def:conditional_subgaussian}.

\begin{theorem}\label{thm:confidence_interval}
  % Suppose the notation in Setting   \ref{setting:online_regression}. 
Let $(y_i)_{i\in \mathbb{N}}$ be an $Y$-valued process 
adapted with respect to $\mathbb{G}$.
Assume that there exists a nonempty closed convex subset 
$\mathcal{C}\subset X$,
$x^\star \in \mathcal{C}$ and $R\ge 0$ such that 
\begin{equation}\label{regression_general_set_up}
    y_i = A_i x^\star+\eps_i ,\quad \textrm{for all }  i \geq 1,
\end{equation}
where 
$\mathbb{E}[\eps_i\mid \mathcal{G}_{i-1}]=0$, and 
$\eps_i$ is $R$-conditional sub-Gaussian  with respect to $\mathcal{G}_{i-1}$.
Define 
  \begin{equation}
\label{eq:x_n_lambda}
x_{n,\lambda}\coloneqq\argmin_{x\in \mathcal{C}}
\left(
\sum_{i=1}^n\|y_i-A_i x\|^2_Y+\lambda \|x\|^2_{X}\right) , \quad n\in \mathbb{N} ,\ \lambda>0. 
\end{equation}
  Then for any $\lambda>0$ and $\delta \in (0,1)$,
  with probability at least $1-\delta$ we have
  for all $n\in \mathbb{N}$,
  $$
\left \|   x_{n, \lambda} - x^\star
    \right\|_{X,M_n+\lambda \id_X}
\le  R
\left(
2\log\left(
\frac{\sqrt{\det \left( \lambda^{-1} M_{n }+\id_X \right)}}{\delta}
\right)\right)^{1/2}
+
\lambda     \left\|x^\star \right\|_{X,(M_n+\lambda \id_X)^{-1}}.
$$
\end{theorem}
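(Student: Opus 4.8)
The plan is to reduce the constrained estimator to the unconstrained ridge estimator and then invoke the martingale tail bound of Theorem~\ref{thm:martingale_tail}. Write $\bar M_n = M_n + \lambda\id_X$ and introduce the unconstrained minimiser $\bar x_{n,\lambda} = \argmin_{x\in X}\big(\sum_{i=1}^n \|y_i - A_i x\|_Y^2 + \lambda\|x\|_X^2\big)$. Setting the gradient to zero gives the normal equations $\bar M_n \bar x_{n,\lambda} = \sum_{i=1}^n A_i^* y_i$, and substituting $y_i = A_i x^\star + \eps_i$ together with $M_n = \sum_{i=1}^n A_i^* A_i$ and $U_n = \sum_{i=1}^n A_i^* \eps_i$ (the process in \eqref{u-proc} with $\eta_i=\eps_i$) yields the clean identity $\bar x_{n,\lambda} - x^\star = \bar M_n^{-1}(U_n - \lambda x^\star)$. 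Since $\|\bar M_n^{-1} v\|_{X,\bar M_n} = \|v\|_{X,\bar M_n^{-1}}$ by symmetry of $\bar M_n^{-1}$, the triangle inequality gives $\|\bar x_{n,\lambda} - x^\star\|_{X,\bar M_n} \le \|U_n\|_{X,\bar M_n^{-1}} + \lambda\|x^\star\|_{X,\bar M_n^{-1}}$, which already matches the two terms on the right-hand side of the claimed bound.

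The second step is to show that passing from the unconstrained to the constrained minimiser cannot increase the $\bar M_n$-weighted error. Completing the square on the regularised objective shows it equals $\|x - \bar x_{n,\lambda}\|_{X,\bar M_n}^2$ up to an additive constant independent of $x$, so $x_{n,\lambda}$ is exactly the projection of $\bar x_{n,\lambda}$ onto the closed convex set $\mathcal{C}$ in the weighted inner product $\langle \bar M_n\,\cdot\,,\cdot\rangle_X$. The variational characterisation of this projection, applied with the point $x^\star\in\mathcal{C}$, gives $\langle \bar M_n(\bar x_{n,\lambda} - x_{n,\lambda}), x^\star - x_{n,\lambda}\rangle_X \le 0$. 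Expanding $\|x^\star - \bar x_{n,\lambda}\|_{X,\bar M_n}^2$ around $x_{n,\lambda}$ and discarding the nonnegative term $\|\bar x_{n,\lambda} - x_{n,\lambda}\|_{X,\bar M_n}^2$ then yields $\|x_{n,\lambda} - x^\star\|_{X,\bar M_n} \le \|\bar x_{n,\lambda} - x^\star\|_{X,\bar M_n}$, i.e. the projection is non-expansive towards points of $\mathcal{C}$.

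Finally I would apply Theorem~\ref{thm:martingale_tail} with $V=\lambda\id_X\in\mathcal{S}_+(X)$ and $\alpha=R$, legitimate since $\eps_i$ is $R$-conditionally sub-Gaussian and the processes $M_n$, $U_n$ coincide with those in \eqref{m-proc}--\eqref{u-proc}. This bounds $\|U_n\|_{X,(M_n+\lambda\id_X)^{-1}}^2 \le 2R^2\log\!\big(\delta^{-1}\sqrt{\det(\lambda^{-1}M_n+\id_X)}\big)$ simultaneously for all $n\in\mathbb{N}$ on an event of probability at least $1-\delta$. Chaining the three displays on this event gives the claimed inequality for all $n$. I expect the only genuinely delicate point to be the projection step: one must identify the correct geometry as the $\bar M_n$-weighted one rather than the ambient inner product on $X$, since it is precisely this weighting that makes the constraint cost-free and preserves the same confidence radius as in the unconstrained case; the algebraic identities and the application of the tail bound are then routine.
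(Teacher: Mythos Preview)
Your proof is correct and reaches the same bound, but the route differs from the paper's. The paper works directly with the constrained minimiser $x_{n,\lambda}$: it writes the first-order optimality (variational) inequality for the strongly convex functional $\sum_{i=1}^n\|y_i-A_ix\|_Y^2+\lambda\|x\|_X^2+\boldsymbol{\delta}_{\mathcal C}(x)$, tests it at $h=x^\star\in\mathcal C$, substitutes $y_i=A_ix^\star+\eps_i$, and applies Cauchy--Schwarz in the $\bar M_n$-weighted inner product to obtain $\|x_{n,\lambda}-x^\star\|_{X,\bar M_n}\le \|\sum_i A_i^*\eps_i-\lambda x^\star\|_{X,\bar M_n^{-1}}$ in one stroke, then invokes Theorem~\ref{thm:martingale_tail}. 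Your argument instead factors through the unconstrained ridge solution $\bar x_{n,\lambda}$, recognises $x_{n,\lambda}$ as its $\bar M_n$-orthogonal projection onto $\mathcal C$, and uses the non-expansiveness of projection towards points of $\mathcal C$. The two are of course equivalent---your projection inequality \emph{is} the paper's first-order condition after completing the square---but your version makes the geometry explicit and yields the intermediate identity $\bar x_{n,\lambda}-x^\star=\bar M_n^{-1}(U_n-\lambda x^\star)$, which is useful in its own right (e.g.\ it underlies \eqref{eq:G_unconstraint_minimiser} and the projection characterisation \eqref{eq:project_admissible}). The paper's route is marginally shorter since it never needs to name $\bar x_{n,\lambda}$.
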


\begin{proof}
Note that for any $n\in \mathbb{N}$ and $\lambda>0$ the map $J_{n,\lam}:X \mapsto  \mathbb{R}\cup \{\infty\} $ which is given by 
$$
J_{n,\lam}(x) = \sum_{i=1}^{n} \|y_i-A_i x\|^2_Y+\lambda \|x\|^2_{X}+
\boldsymbol{\delta}_{\mathcal{C}}(x) , 
\quad 
\textnormal{with 
$\boldsymbol{\delta}_{\mathcal{C}}(x)=\begin{cases}
    0, & x\in \mathcal{C}\\
    \infty, & x\not \in \mathcal{C}
\end{cases},$}
$$
is strongly convex. Hence $x_{n,\lambda}\in \mathcal{C}$ is well-defined
and it satisfies the following first-order condition: 
$$
\left\langle  2\sum_{i=1}^{n} A_i^*(A_i x_{n,\lambda}-y_i) +2\lambda  x_{n,\lambda}, h-x_{n,\lambda}\right\rangle_X\ge 0, \quad \textrm{for all }h\in \mathcal{C}. 
$$ 
Substituting $h=x^\star\in \mathcal{C}$ in the above inqeuality and using  $y_i =A_i x^\star+\eps_i$ gives,
\begin{align*}
 \left\langle   \sum_{i=1}^{n} A_i^* ( A_i  x_{n,\lambda}-A_i  x^\star -\eps_i ) + \lambda  (x_{n,\lambda}-x^\star)
+\lambda x^\star, x^\star-x_{n,\lambda}\right\rangle_X
\ge 0.
\end{align*}
Now let $M_n=\sum_{i=1}^{n} A_i^{*} A_i  $ 
and $\bar{M}_n=M_n+\lambda \id_X$.
Then
\begin{align*}
 \left\langle     \bar{M}_n (  x_{n,\lambda}-x^\star) -\sum_{i=1}^{n} A_i^* \eps_i   
+\lambda x^\star, x_{n,\lambda}-x^\star\right\rangle_X
\le 0,
\end{align*}
which along with the invertibility of $\bar{M}_n\in \mathcal{S}_+(X)$ and the Cauchy-Schwarz inequality implies that 
\begin{align*}
\left \|   x_{n, \lambda} - x^\star
    \right\|_{X,\bar{M}_n}^2
&\le 
 \left\langle    \sum_{i=1}^{n} A_i^* \eps_i   
-\lambda x^\star, x_{n,\lambda}-x^\star\right\rangle_X \\
&=
 \left\langle    \bar{M}_n \bar{M}_n^{-1}\left(\sum_{i=1}^{n} A_i^* \eps_i   
-\lambda x^\star\right), x_{n,\lambda}-x^\star\right\rangle_X
\\
&\le 
\left\|      \bar{M}_n^{-1}\left(\sum_{i=1}^{n} A_i^* \eps_i 
-\lambda x^\star\right)
\right\|_{X,\bar{M}_n}
\left\|      
x_{n,\lambda}-x^\star\right\|_{X,\bar{M}_n }.
\end{align*}
This together with the identitity 
 $\left \| \bar{M}_n^{-1}  x
      \right\|_{X,\bar{M}_n}
      =\left \|   x\right\|_{X,\bar{M}_n^{-1}}
      $  
for all $x\in X$ yields
\begin{align*}
\left \|   x_{n, \lambda} - x^\star
    \right\|_{X,\bar{M}_n} 
&\le 
\left\|      \sum_{i=1}^{n} A_i^* \eps_i 
-\lambda x^\star 
\right\|_{X,\bar{M}^{-1}_n}
\le 
\left(\left\|      \sum_{i=1}^{n} A_i^* \eps_i 
\right\|_{X,\bar{M}^{-1}_n}
+\lambda\left\|     
  x^\star 
\right\|_{X,\bar{M}^{-1}_n}\right).
\end{align*}
The desired estimate then follows from Theorem \ref{thm:martingale_tail}
with $V=\lambda \id_X$ and $\alpha =R$.
\end{proof}

\section{Proof of Theorems \ref{thm:confidence_volterra} and \ref{thm:confidence_convolution}} \label{sec-pf-est}

\begin{proof}[Proof of Theorem \ref{thm:confidence_volterra}] 
Let  $( \mathcal{G}_i)_{i=0}^{N-1}$ be  the filtration given   in Definition \ref{def:offline_data}, and 
for each $n=1,\ldots, N$, let     $T_n:\mathbb{R}^{M \times M}\to \mathbb{R}^M$ be such
that 
$T_n G\coloneqq  Gu^{(n)} $
for all $G\in \mathbb{R}^{M \times M}$,
and 
let $T^*_n: \mathbb{R}^M \to \mathbb{R}^{M \times M}$ be the adjoint of $T_n$.
Then by \eqref{eq:price_impact_linear_regression},
$y^{(n)}=T_nG^\star+\eps^{(n)}$
for all $n=1,\ldots, N$, and   
the least-squares estimator \eqref{eq:G_n_lambda_volterra} is equivalent to  
\begin{equation}
G_{N,\lambda}\coloneqq\argmin_{G \in \mathbb{R}^{M \times M}}
\sum_{n=1}^N \|y^{(n)}-T_n G \|^2+\lambda\|G\|_{\mathbb{R}^{M \times M}}^2.
\end{equation}
Recall  that  
$(\mathbb{R}^{M \times M}, \|\cdot\|_{\mathbb{R}^{M \times M}})$   
is a finite-dimensional Hilbert space with
  the inner product 
$\langle A,B\rangle_{\mathbb{R}^{M \times M}}\coloneqq \tr(A^\top B)$
for all $A,B\in  \mathbb{R}^{M \times M}$. 
Then by 
Theorem \ref{thm:confidence_interval}
(with $X=\mathbb{R}^{M \times M}$ and $Y=\mathbb{R}^M$),
 for all $\lambda>0$ and $\delta \in (0,1)$,
  with probability at least $1-\delta$,
\begin{align}
\label{eq:G_error_HS}
\begin{split}
&\left \|   G_{N,\lambda}  - G^\star
    \right\|_{\mathbb{R}^{M \times M},M_N+\lambda \id_{\mathbb{R}^{M \times M}}}
\\
&\quad \le  R
\left(
2\log\left(
\frac{\sqrt{\det \left( \lambda^{-1} M_{N }+\id_{\mathbb{R}^{M \times M}} \right)}}{\delta}
\right)\right)^{1/2}
+
\lambda     \left\|G^\star \right\|_{\mathbb{R}^{M \times M},(M_N+\lambda \id_{\mathbb{R}^{M \times M}})^{-1}},
\end{split}
\end{align}
where 
  $\det()$ denotes the Fredholm determinant,
$M_N =\sum_{n=1}^N T_n^*T_n: \mathbb{R}^{M \times M}\to \mathbb{R}^{M \times M}$, 
$\id_{\mathbb{R}^{M \times M}}$ is the identity map on $\mathbb{R}^{M \times M}$, 
and the norm 
$\|\cdot\|_{\mathbb{R}^{M \times M},M_N+\lambda \id_{\mathbb{R}^{M \times M}}}$
is   defined by 
\begin{equation}
\label{eq:RHH_weightnorm}
\|G\|_{\mathbb{R}^{M \times M},M_N+\lambda \id_{\mathbb{R}^{M \times M}}}
=\left(
\left\langle
(M_N+\lambda \id_{\mathbb{R}^{M \times M}} )G,G
\right\rangle_{\mathbb{R}^{M \times M}}
\right)^{1/2},
\quad G\in \mathbb{R}^{M \times M}.
\end{equation}

In the sequel, we fix $\lambda>0$ and $\delta \in (0,1)$, and simplify the estimate \eqref{eq:G_error_HS}.
Observe that 
for all $G\in \mathbb{R}^{M \times M}$,
$\id_{\mathbb{R}^{M \times M}} G=G \mathbb{I}_{M}$,
and  
for all $G\in \mathbb{R}^{M \times M}$
and $y\in \mathbb{R}^M$,
$$
\langle T_n G,y\rangle 
=\langle  Gu^{(n)},y\rangle 
 = ( Gu^{(n)})^\top y=\tr ( (u^{(n)})^\top  G^\top y)
 =\tr(G^\top y(u^{(n)})^\top )=\langle G,y(u^{(n)})^\top\rangle_{\mathbb{R}^{M \times M}},
$$
which implies that 
$T_n^*: \mathbb{R}^M\to \mathbb{R}^{M \times M}$ is given by  $T_n^* y \coloneqq y (u^{(n)})^\top$ for all 
  $y\in  \mathbb{R}^M$.
  Thus 
  $T_n^*T_n: \mathbb{R}^{M \times M}\to  \mathbb{R}^{M \times M}$
  satisfies 
  $T_n^*T_nG=Gu^{(n)}(u^{(n)})^\top$ for all $G\in \mathbb{R}^{M \times M}$.
  Then  by \eqref{eq:RHH_weightnorm},
 \begin{align*}
   \|G\|^2_{\mathbb{R}^{M \times M},M_N+\lambda \id_{\mathbb{R}^{M \times M}}}
&= 
\left\langle
(M_N+\lambda \id_{\mathbb{R}^{M \times M}} )G,G
\right\rangle_{\mathbb{R}^{M \times M}}
\\
&=\left\langle
G\left(\sum_{n=1}^N 
u^{(n)}(u^{(n)})^\top +\lambda \mathbb{I}_M
  \right),G
\right\rangle_{\mathbb{R}^{M \times M}}
.  
 \end{align*}
Let $V_N= \sum_{n=1}^N 
u^{(n)}(u^{(n)})^\top$. 
Then for all $A\in \mathbb{R}^{M \times M}$,   
  $(M_N+\lambda   \id_{\mathbb{R}^{M \times M}} )A =A(V_N+ \lambda \mathbb{I}_M)$,
  and 
  $(M_N+\lambda   \id_{\mathbb{R}^{M \times M}} )^{-1}A =A(V_N+\lambda  \mathbb{I}_M)^{-1}$. As $V_N+\lambda \mathbb{I}_M$ is symmetric and positive definite, 
for all $G\in \mathbb{R}^{M \times M}$,
\begin{align}
\label{eq:RHH_weightednorm_V}
\begin{split}
 \|G\|^2_{\mathbb{R}^{M \times M},M_N+\lambda \id_{\mathbb{R}^{M \times M}}}
& =\tr\left((V_N+\lambda \mathbb{I}_M)G^\top G\right)
=\tr\left((V_N+\lambda \mathbb{I}_M)^{\frac{1}{2}} G^\top G(V_N+\lambda \mathbb{I}_M)^{\frac{1}{2}} \right)   
\\
& 
= \left\|G(V_N+\lambda \mathbb{I}_M)^{\frac{1}{2}}   \right\|^2_{\mathbb{R}^{M \times M}},  
\end{split}
\end{align}
and similarly, 
\begin{align}
\label{eq:RHH_weightednorm_V_inverse}
\begin{split}
 \|G\|_{\mathbb{R}^{M \times M},(M_N+\lambda \id_{\mathbb{R}^{M \times M}})^{-1}}
&  
= \left\|G(V_N+\lambda \mathbb{I}_M)^{-\frac{1}{2}}   \right\|_{\mathbb{R}^{M \times M}}. 
\end{split}
\end{align}
It remains to compute the 
   Fredholm determinant
   $\det \left( \lambda^{-1} M_{N }+\id_{\mathbb{R}^{M \times M}} \right)$.
   For each $i,j=1,\ldots, M$,
   let $E_{ij}\in \mathbb{R}^{M \times M}$ be the matrix such that the  
   $(i,j)$-th entry is  $1$ and  all   remaining entries are zero.
Then  $(E_{ij})_{i,j=1}^M$ is an orthonormal basis of $(\mathbb{R}^{M \times M}, \|\cdot\|_{\mathbb{R}^{M \times M}})$, and   the   
   Fredholm determinant of 
$\lambda^{-1} M_{N }+\id_{\mathbb{R}^{M \times M}}$ can be computed using  its matrix representation 
with respect to the basis  $(E_{ij})_{i,j=1}^M$.
Indeed,  by \cite[Theorem 3.2 p.~117]{gohberg1990classes},
    \begin{align*}
 \det \left( \lambda^{-1} M_{N }+\id_{\mathbb{R}^{M \times M}} \right)
 &=\det\left( \left(\delta_{ij,i'j'}+\lambda^{-1}\langle 
 M_N E_{ij}, E_{i'j'}
 \rangle_{\mathbb{R}^{M \times M}}\right)_{i,j,i',j'=1}^M
 \right)
 \\
 &=
 \det\left( \left(\delta_{ij,i'j'}+\lambda^{-1}\langle 
  E_{ij}V_N, E_{i'j'}
 \rangle_{\mathbb{R}^{M \times M}}\right)_{i,j,i',j'=1}^M
 \right),
\end{align*}
where $\delta_{ij,i'j'}$ is the Kronecker's delta (i.e., $\delta_{ij,i'j'} =1$ if $i=i'$ and $j =j'$ and $0$ otherwise). A direct computation
shows that 
for all $\ell,k=1,\ldots, M$,
$(E_{ij}V_N)_{\ell, k}=\delta_{i,\ell}(V_N)_{j,k}$,
{\color{black}$((E_{ij}V_N)^\top)_{\ell, k}(E_{i'j'})_{k,\ell}
=\delta_{i,k}(V_N)_{j,\ell }\delta_{i',k}\delta_{j',\ell}$},
and, hence,  
$\langle 
  E_{ij}V_N, E_{i'j'}
 \rangle_{\mathbb{R}^{M \times M}}=\delta_{i,i'}(V_N)_{j,j' }$.
Thus 
$$
\left(\delta_{ij,i'j'}+\lambda^{-1}\langle 
  E_{ij}V_N, E_{i'j'}
 \rangle_{\mathbb{R}^{M \times M}}\right)_{i,j,i',j'=1}^M
=\operatorname{diag}(
\mathbb{I}_M+\lambda^{-1}V_N,\cdots ,
  \mathbb{I}_M+\lambda^{-1}V_N
  )\in \mathbb{R}^{M^2\times M^2},
 $$
which implies that 
$$ \det \left( \lambda^{-1} M_{N }+\id_{\mathbb{R}^{M \times M}} \right)
=(\det(\mathbb{I}_M+\lambda^{-1}V_N ))^M
=(\lambda^{-M}\det(\lambda\mathbb{I}_M+V_N ))^M.
$$ 
This along with 
\eqref{eq:G_error_HS} and \eqref{eq:RHH_weightednorm_V} shows that
\begin{align}
\label{eq:unconstrain_G_error}
\begin{split}
&\left\|(G_{N,\lambda}  - G^\star)(\lambda \mathbb{I}_M+V_N)^{\frac{1}{2}}   \right\|_{\mathbb{R}^{M \times M}}
\\
&\quad \le  R
\left(
2\log\left(
\frac{
\sqrt{
\lambda^{-M^2} (\det(\lambda\mathbb{I}_M+V_N ))^{M}}
}{\delta}
\right)\right)^{\frac{1}{2}}
+\lambda  \left \|    G^\star
     (V_N+\lambda \mathbb{I}_M)^{-\frac{1}{2}}\right\|_{\mathbb{R}^{M \times M}}
\\
&\quad  =  R
\left(
 \log\left(
\frac{
\lambda^{-M^2} (\det(\lambda\mathbb{I}_M+V_N ))^{M}
}{\delta^2}
\right)\right)^{\frac{1}{2}}
+\lambda  \left \|    G^\star
  (V_N+\lambda \mathbb{I}_M)^{-\frac{1}{2}}  \right\|_{\mathbb{R}^{M \times M}}.
\end{split}
\end{align}
% where the last line used the fact that
% $2\log (x)=\log(x^2)$. 
This 
proves the desired estimate.
\end{proof}

\begin{proof}[Proof of Theorem \ref{thm:confidence_convolution}] 
Let  $(\mathcal{G}_i)_{i=0}^{N-1}$ be  the filtration given   in Definition \ref{def:offline_data}.
As $u^{(n)}$ is $\mathcal{G}_{n-1}$-measurable, 
$U_n $ is $\mathcal{G}_{n-1}$-measurable    
and $y^{(n)}$ is $\mathcal{G}_{n}$-measurable.   
 Hence by  
Theorem \ref{thm:confidence_interval}
(with $X=Y =\mathbb{R}^M$),
for each $n\in \mathbb{N}$ and $\lambda>0$,
\begin{align*}
&\left \|  W_{N,\lambda}^{\frac{1}{2}} ({K}_{N,\lambda}  - K^\star) 
    \right\|
    \\
&\quad 
\le  R
\left(
 \log\left(
\frac{ \det \left( \lambda^{-1} \sum_{n=1}^N U^\top_n U_n+\mathbb{I}_H \right) }{\delta^2}
\right)\right)^{\frac{1}{2}}
+
\lambda    \left\|W_{N,\lambda}^{\frac{1}{2}} K^\star \right\|
\\
&\quad   
\le  R
\left(
 \log\left(
\frac{ \lambda ^{-M}\det \left( W_{N,\lambda} \right) }{\delta^2}
\right)\right)^{\frac{1}{2}}
+
\lambda    \left\|W_{N,\lambda}^{\frac{1}{2}} K^\star \right\|.
\end{align*}
 This proves the desired estimate.  
\end{proof}

\section{Proof of Theorems  \ref{THM:performance_bound} and \ref{thm-l-bound} and Corollary \ref{cor-quant}}  \label{sec-pf-pessim} 

\begin{proof}[Proof of Theorem \ref{THM:performance_bound}] 
(i) 
Let $\lambda>0$ and $\dl \in (0,1)$. By Theorem \ref{thm:confidence_volterra}, \eqref{bnd-err-1} holds with probability $1-\delta$.  Using the invertibility  of $V_{N,\lambda}$ (recall \eqref{eq:project_admissible}), and Cauchy-Schwarz inequality we get for all $u\in \mathcal{A}_{b}$, 
\be \label{g-diff-est1} 
\begin{aligned}
 \left   \| (G^{\star} - G_{N,\lambda})   u \right \|  
    & = \left \| (G^{\star} - G_{N,\lambda})V_{N,\lambda}^{1/2} V_{N,\lambda}^{-1/2}   u \right\| \\ 
    & \leq \left \| (G^{\star} - G_{N,\lambda})V_{N,\lambda}^{1/2} \right \|_{\mathbb{R}^{M\times M}} \left \| V_{N,\lambda}^{-1/2} u  \right \|  .
\end{aligned}
\ee
From \eqref{bnd-err-1}, \eqref{l-bnd}, \eqref{c-const} and \eqref{g-diff-est1}, we obtain with probability at least $1 - \delta$,
\be \label{g-diff-est2} 
\begin{aligned}
    & \| (G^{\star} - G_{N,\lambda}) u  \| \| u \| \\
    &\leq L^\mathcal{A}_{\eqref{l-bnd}}  \left[ R\left(\log\left(\frac{\lambda^{-M^2} (\det(V_{N\lambda} ))^{M}}{\delta^2}\right)\right)^{\frac{1}{2}}+\lambda  \left \|     G^\star (V_{N,\lambda})^{-\frac{1}{2}}  \right\|_{\mathbb{R}^{M \times M}} \right] \left \| V_{N,\lambda}^{-1/2} u \right \|
     \\ 
     & \leq {\color{black}L^\mathcal{A}_{\eqref{l-bnd}}C_{\eqref{c-const}}(N)  \left \| V_{N,\lambda}^{-1/2} u \right \|} \\
     &= \ell_1(V_{N,\lambda},u),
\end{aligned}
\ee
where we used \eqref{fun_regularization} in the last equality. 

From \eqref{per-func}, \eqref{j-p1}, \eqref{g-diff-est2} and Cauchy-Schwarz inequality, we get (with probability at least $1-\delta$)
\be \label{j-diff1}
 \begin{aligned}
  & J(\hat{u}^{(1)};G^{\star})- J_{P,1} (\hat{u}^{(1)};G_{N,\lam}) \\
  &= \mathbb{E}_{\mathcal D} \left[ \sum_{i=1}^{M}   \sum_{j=1 }^{i} G^{\star}_{i,j}  \hat{u}^{(1)}_{t_j} \hat{u}^{(1)}_{t_i}  \right] - \mathbb{E}_{\mathcal D}\left[ \sum_{i=1}^{M}   \sum_{j=1 }^{i} (G_{N,\lambda})_{i,j} \hat{u}^{(1)}_{t_j} \hat{u}^{(1)}_{t_i} +\ell_1(V_{N,\lambda},\hat{u})    \right] \\
    & = \mathbb{E}_{\mathcal D}\left[  \left \langle (G^{\star} - G_{N,\lambda}) \hat{u}^{(1)},\hat{u}^{(1)} \right \rangle  \right]  - \mathbb{E}_{\mathcal D}\left[  \ell_1(V_{N,\lambda}\hat{u}^{(1)}) \right]\\
    & \leq  \mathbb{E}_{\mathcal D}\left[ \| (G^{\star} - G_{N,\lambda}) \hat{u}^{(1)} \|  \| \hat{u}^{(1)} \| \right]
     -\mathbb{E}_{\mathcal D}\left[  \ell_1(V_{N,\lambda},\hat{u}^{(1)}) \right] \\
    & \leq  \mathbb{E}_{\mathcal D}\left[\ell_1(V_{N,\lambda},\hat{u}^{(1)}) \right]    -\mathbb{E}_{\mathcal D} \left[  \ell_1(V_{N,\lambda},\hat{u}^{(1)}) \right] \\
    &=  0.
\end{aligned}
\ee
Further, we deduce from \eqref{j-diff1} and the optimality of $\hat u^{(1)}$ with respect to  $J_{P,1}( \cdot \,;G_{N,\lam})$ that with probability  $1 - \delta$,
\be \label{i01} 
\begin{aligned}
   J(\hat u^{(1)};G^{\star})- J(u;G^{\star})   & \leq J_{P,1}(\hat u^{(1)};G_{N,\lambda}) -J(u;G^{\star})  \\
   & \leq  J_{P,1}(u;G_{N,\lambda}) -J(u;G^{\star}) \\
   & =  \mathbb{E}_{\mathcal D}\left[ \sum_{i=1}^{M}   \sum_{j=1 }^{i} (G_{N,\lambda})_{i,j} u_{t_j} u_{t_i} +\ell_1(V_{N,\lambda},u)    \right] -\mathbb{E}_{\mathcal D}\left[ \sum_{i=1}^{M}   \sum_{j=1 }^{i} G^{\star}_{i,j}  u_{t_j} u_{t_i}  \right]  \\
   & =   \mathbb{E}_{\mathcal D}\left[  \ell_1(V_{N,\lambda},u) \right] - \mathbb{E}_{\mathcal D}\left[  \left \langle (G^{\star} - G_{N,\lambda}) u, u \right \rangle  \right] \\
& \leq   \mathbb{E}_{\mathcal D}\left[  \ell_1(V_{N,\lambda},u) \right] +\left| \mathbb{E}_{\mathcal D}\left[  \left \langle (G^{\star} - G_{N,\lambda}) u, u \right \rangle  \right] \right|\\
   &\leq 2 \mathbb{E}_{\mathcal D} \left[  \ell_1(V_{N,\lambda},u) \right], \quad \textrm{for all } u \in \mathcal A_b, 
\end{aligned}
\ee
where we used the Cauchy-Schwarz inequality and \eqref{g-diff-est2} to derive the last estimate. 
This completes the proof of (i).

(ii) Let $\lambda>0$ and $\dl \in (0,1)$. By Theorem \ref{thm:confidence_convolution}, \eqref{diff-k}  holds with probability $1-\delta$.  Using the invertibility of $W_{N,\lambda}$ (recall \eqref{w-def}), \eqref{diff-k}, \eqref{l-bnd}, \eqref{eq:cost_conv}, \eqref{c-2-const}  and Cauchy-Schwarz inequality we get for all $u\in \mathcal{A}_{b}$, 
\be \label{ey1}
\begin{aligned}
\| (\mathcal T u) (K^{\star} - K_{N,\lambda})  \|  \|u\|  &\leq  L^\mathcal{A}_{\eqref{l-bnd}} \left   \| \mathcal ( \mathcal{T}u) W_{N,\lambda}^{-1/2} W_{N,\lambda}^{1/2} (K^{\star} - K_{N,\lambda}) \right \| \\ 
    % & =  L^\mathcal{A}_{\eqref{l-bnd}} \left \|  W_{N,\lambda}^{-1/2} W_{N,\lambda}^{1/2}   (K^{\star} - K_{N,\lambda}) \right \| \left \|\mathcal Tu  W_{N,\lambda}^{-1/2}  W_{N,\lambda}^{1/2} (K^{\star} - K_{N,\lambda}) \right \| \\ 
    & \leq  L^\mathcal{A}_{\eqref{l-bnd}}  \left \| W_{N,\lambda}^{1/2} (K^{\star} - K_{N,\lambda}) \right \|  \left \| \mathcal Tu W_{N,\lambda}^{-1/2}   \right \|_{\mathbb{R}^{M\times M}} \\ 
    & \leq \ell_2(W_{N,\lambda},u),
\end{aligned}
\ee
where we define a linear map 
$\mathcal T : \mathbb R^{M} \mapsto \mathbb R^{M\times M}$ 
such that 
  $U=\mathcal T u$, with
$U$ being defined as in \eqref{t-op}.

From \eqref{per-func}, \eqref{j-p2}, \eqref{t-op}, \eqref{ey1} and Cauchy-Schwarz inequality, it follows that with probability at least $1 - \delta$, 
\begin{align*}
  & J(\hat{u}^{(2)};K^{\star})- J_{P,2} (\hat{u}^{(2)};K_{N,\lam}) \\
 &= \mathbb{E}_{\mathcal D}\left[ \sum_{i=1}^{M}   \sum_{j=1 }^{i} K^{\star}_{i-j}  \hat{u}^{(2)}_{t_j} \hat{u}^{(2)}_{t_i}  \right] - \mathbb{E}_{\mathcal D}\left[ \sum_{i=1}^{M}   \sum_{j=1 }^{i} (K_{N,\lambda})_{i-j}  \hat{u}^{(2)}_{t_j}\hat{u}^{(2)}_{t_i} +\ell_2(W_{N,\lambda},\hat{u}^{(2)})    \right] \\
    & = \mathbb{E}_{\mathcal D}\left[  \left \langle \mathcal T \hat{u}^{(2)} (K^{\star} - K_{N,\lambda}) ,   \hat{u}^{(2)}\right \rangle \right]  - \mathbb{E}_{\mathcal D}\left[  {\ell}_2(W_{N,\lambda},\hat{u}^{(2)}) \right]\\
    & \leq  \mathbb{E}_{\mathcal D}\left[  \| \mathcal T \hat{u}^{(2)}  (K^{\star} - K_{N,\lambda})  \|   \|  \hat{u}^{(2)}  \| \right]
     -\mathbb{E}_{\mathcal D}\left[  {\ell}_2(W_{N,\lambda},\hat{u}^{(2)}) \right] \\
    & \leq  \mathbb{E}_{\mathcal D}\left[{\ell}_2(W_{N,\lambda},\hat{u}^{(2)}) \right]    -\mathbb{E}\left[ {\ell}_2(W_{N,\lambda},\tilde{u}) \right] =  0.
\end{align*}
Then (ii) follows by applying similar steps as in \eqref{i01}. 
\end{proof}

\begin{proof}[Proof of Corollary \ref{cor-quant}]
(i) Repeating similar steps as in \eqref{j-diff1} we have with $\mathbb P'$-probability at least $1-\dl$, for any $u\in \mathcal A_b$, 
\be  
 \begin{aligned}
  & J(u;G^{\star})- J  (u;G_{N,\lam}) \\
  &= \mathbb{E}_{\mathcal D}\left[ \sum_{i=1}^{M}   \sum_{j=1 }^{i} G^{\star}_{i,j}  u_{t_j}{u}_{t_i}  \right] - \mathbb{E}_{\mathcal D}\left[ \sum_{i=1}^{M}   \sum_{j=1 }^{i} (G_{N,\lambda})_{i,j} u_{t_j} u_{t_i}    \right] \\
    & = \mathbb{E}_{\mathcal D}\left[  \left \langle (G^{\star} - G_{N,\lambda}) u,u \right \rangle  \right]   \\
    & \leq  \mathbb{E}_{\mathcal D}\left[ \| (G^{\star} - G_{N,\lambda})u \|  \| u \| \right]\\
    & \leq  \mathbb{E}_{\mathcal D}\left[\ell(V_{N,\lambda},u) \right], 
\end{aligned}
\ee
which completes the proof. 

(ii) the proof follows similar lines as the proof of (i), hence it is omitted. 
\end{proof}

\begin{proof}[Proof of Theorem \ref{thm-l-bound}] 
(i) Let $\lam >0$ and $ \delta \in (0,1)$. 
Then there exists an event $A_\delta$
with $\mathbb{P}'(A_\delta)\ge 1-2\delta$,
on which both 
Theorem \ref{THM:performance_bound}(i)
and Assumption \ref{assum:concentration_inequ}(i) hold. 
We work on the event $A_\delta$ in the subsequent analysis.
From Theorem \ref{THM:performance_bound}(i) it follows that we need to bound $\mathbb{E} \left[  \ell_1(V_{N,\lambda},u) \right]$.  
We note that by using \eqref{fun_regularization} and Jensen's inequality, we can estimate $\E[ \ell_1(V_{N,\lambda},u)]$ as follows, 
\be\label{g1}
 \begin{aligned}
 \E_{\mathcal D}\big[  \ell_1(V_{N,\lambda},u) \big]&  =  L^\mathcal{A}_{\eqref{l-bnd}}C_{\eqref{c-const}}(N) \mathbb{E}_{\mathcal D} \left[ \sqrt{u^{\top} V_{N,\lambda}^{-1} u} \right]  \\
   & \leq L^\mathcal{A}_{\eqref{l-bnd}}C_{\eqref{c-const}}(N) \sqrt{\mathbb{E}_{\mathcal D} \left[ u^{\top} V_{N,\lambda}^{-1} u  \right]}. 
\end{aligned}
\ee
By \eqref{eq:project_admissible}
 and Assumption \ref{assum:concentration_inequ}(i), on the event $A_\delta$,
 % it follows that with probability $1-\delta$, we have for all $N\geq1$, 
\be \label{g2} 
V_{N,\lambda}^{-1} =    \left( \sum_{n = 1}^{N} u^{(n)} (u^{(n)})^{\top}  + \lambda \mathbb{I}_{M} \right)^{-1} \leq  \left( N \Sigma - C_{\ref{assum:concentration_inequ}}(\dl) \sqrt{N} \mathbb{I}_{M} + \lambda  \mathbb{I}_{M} \right)^{-1} \leq  N^{-1} \Sigma^{-1},
\ee
where we used $\lam =C_{\ref{assum:concentration_inequ}}(\delta) \sqrt{N} $ by the hypothesis of the theorem. Inequality \eqref{g2} is to be interpreted as follows, 
\begin{align*}
    x^{\top} V_{N,\lambda}^{-1} x \leq  x^{\top} \left( N \Sigma \right)^{-1} x, \quad \textrm{for all } x \in \mathbb{R}^M. 
\end{align*}
Note that $\Sigma^{-1}$ exists since $\Sigma$ is a symmetric positive-definite matrix. 

By using the Rayleigh quotient bounds on the symmetric positive-definite matrix $\Sigma^{-1}$ we have,  
 \be \label{g3} 
 \|\Sigma^{-1} x \|  \leq  \ul \xi^{-1}_{\Sigma} \|x\|  , \quad \textrm{for all } x \in \mathbb{R}^M, 
\ee
where $ \ul \xi_{\Sigma}$ is the minimal eigenvalue of $\Sigma$, hence $ \ul \xi_{\Sigma}^{-1}$ it is the maximal eigenvalue of $\Sigma^{-1}$. 

From \eqref{g2}, Cauchy-Schwarz inequality and \eqref{g3}, we deduce that with probability $1-\delta$,
\be \label{tf1} 
\begin{aligned}
   \mathbb{E}_{\mathcal D}\left[  u^{\top} V_{N,\lambda}^{-1} u\right] 
& \leq \mathbb{E}_{\mathcal D}\left[ u^{\top} \left( N \Sigma \right)^{-1} u \right]  \\
&=  \frac{1}{N} \mathbb{E}_{\mathcal D} \left[\left \langle   \Sigma ^{-1} u, u \right \rangle \right] \\ 
&\leq  \frac{1}{N} \mathbb{E}_{\mathcal D} \left[ \| \Sigma ^{-1} u \| \| u \| \right] \\ 
&\leq  \frac{1}{N} \ul \xi^{-1}_{\Sigma} \mathbb{E}_{\mathcal D}[\| u \|^2],  \quad \textrm{for all } u \in \mathcal A_b.
 \end{aligned}
\ee

From \eqref{g1}, \eqref{tf1} and \eqref{l-bnd}, we get with probability $1-\delta$,
\be \label{g4} 
\E_{\mathcal D}\big[  \ell_1(V_{N,\lambda},u) \big]    \leq  (L^{\mathcal A}_{\eqref{l-bnd}})^2  \frac{C_{\eqref{c-const}}(N)}{\sqrt{N}} \ul \xi_{\Sigma}^{-1/2}. 
\ee

We conclude from \eqref{g4} that in order to derive an explicit bound on $\E\big[  \ell_1(V_{N,\lambda},u) \big]$ in terms of $N$ we need to further analyse the constant $C_{\eqref{c-const}} (N)$, which was defined in \eqref{c-const}. To do so, we first bound the so called \emph{information gain},  $$I(M,N,\Sigma) \coloneqq \log\left(
\lambda^{-M^2} (\det V_{N,\lam} ))^{M}
\right).$$ Denote the eigenvalues of $\Sigma$ by $\xi_1 \geq \ldots \geq \xi_i \geq \ldots   \geq  \xi_M > 0$. Using \eqref{eq:project_admissible}, Assumption \ref{assum:concentration_inequ}(i) and the monotonicity of the determinant of positive definite matrices we obtain
\be \label{i-m} 
\begin{aligned}
I(M,N,\Sigma) & = M\log \left(
(\det(\lambda \mathbb{I}_{M}))^{-1}\det( V_{N, \lam} )  \right) \\
& \leq M\log \left(
(\det(\lambda \mathbb{I}_{M}))^{-1}\det( N\Sigma + (C_{\ref{assum:concentration_inequ}}(\dl) \sqrt{N} + \lambda) \mathbb{I}_M) \right) \\
&= M    \log \left( \frac{ \prod_{i=1}^M (\lam + C_{\ref{assum:concentration_inequ}}(\dl) \sqrt{N} +N\xi_i) }{\lambda^M}  \right)  \\
&= M  \sum_{i=1}^M  \log \left( \frac{ \lam + C_{\ref{assum:concentration_inequ}}(\dl) \sqrt{N} +N\xi_i }{\lambda}  \right). 
\end{aligned}
\ee
Plugging in our choice of $\lambda  = C_{\ref{assum:concentration_inequ}}(\delta) \sqrt{N}$, we get 
\be \label{g7} 
\begin{aligned}
 I(M,N,\Sigma) &\leq  M^2  \log \left( 2 + \frac{ \sqrt{N} \xi_1}{C_{\ref{assum:concentration_inequ}}(\dl)}  \right)  \\
  &\leq  M^2 \left(  \log \left( 2 + \frac{   \xi_1}{C_{\ref{assum:concentration_inequ}}(\dl)}  \right) +\frac{1}{2} \log N \right) \\
% & \leq M^2 \left(  \log \left(2 + \frac{  \| 
%  \Sigma \|_{\mathbb{R}^{M \times M}}}  {C_{\ref{assum:concentration_inequ}}(\dl) } \right)+\frac{1}{2} \log N \right),
& = M^2 \left(  \log \left(2 + \frac{  \ol \xi_\Sigma}  {C_{\ref{assum:concentration_inequ}}(\dl) } \right)+\frac{1}{2} \log N \right),
\end{aligned}
\ee
 % where we used the fact that the Frobenius norm of a symmetric positive definite matrix is larger than the maximal eigenvalue. 
  where we recall that $\ol \xi_\Sigma =\xi_1$ is the maximal eigenvalue of $\Sigma$. 
\begin{comment} 
Using Assumption \ref{assum:concentration_inequ}(i) we also bound, 
  \allowdisplaybreaks
 \begin{align*}
        \lambda  \left \|   
  (V_{N,\lam}  )^{-1/2}  \right\|_{\mathbb{R}^{M \times M}} & =  \lambda   \sqrt{ \left \langle \left(   \sum_{n=1}^{N } u_n u^{\top}_n + \lambda \mathbb{I}_{M}     \right)^{-1/2},
\left(   \sum_{n=1}^{N } u_n u^{\top}_n + \lambda \mathbb{I}_{M}     \right)^{-1/2} \right \rangle_{\mathbb{R}^{M \times M}} } \\
     &=   \frac{\lambda}{\sqrt{N}}   \sqrt{ N \left \langle  \left(   \sum_{n=1}^{N } u_n u^{\top}_n + \lambda \mathbb{I}_{M}     \right)^{-1/2}, \left(   \sum_{n=1}^{N } u_n u^{\top}_n + \lambda \mathbb{I}_{M}     \right)^{-1/2} \right \rangle}  \\
     & \leq {\color{black} \frac{\lambda}{\sqrt{N}}   \sqrt{ \left \langle \Sigma^{-1/2}_{\rho}, \Sigma^{-1/2}_{\rho} \right \rangle} } \\
     & \leq {\color{black} C_{\ref{assum:concentration_inequ}}(\delta) \sqrt{M/\xi_{\min} (\Sigma) } .}
\end{align*}
\end{comment} 
From \eqref{g2} and \eqref{frob} we get that, 
\be \label{g8} 
\left \|    (V_{N,\lam}  )^{-1/2}  \right\|^2_{\mathbb{R}^{M \times M} } = 
\textrm{tr}({V_{N,\lam}}^{-1})  \leq N^{-1}\textrm{tr} (\Sigma^{-1}) = N^{-1}\sum_{k=1}^M \xi^{-1}_k\\
\leq M N^{-1}\ul\xi^{-1}_{\Sigma}, 
\ee
where we recall that $ \ul \xi_{\Sigma} = \xi_M$ is the minimal eigenvalue of $\Sigma$.
\begin{comment}
\end{comment} 
From \eqref{c-const},  \eqref{i-m},  \eqref{g7} and \eqref{g8} and $\lambda  = C_{\ref{assum:concentration_inequ}}(\delta) \sqrt{N}$, it follows that,
\be   \label{g10} 
 \begin{aligned}
    C_{\eqref{c-const}} (N)  &= R\big( I(M,N,\Sigma) +\log(\dl^{-2})\big)^{1/2} + \lambda   L^\mathscr{G}_{\eqref{l-bnd}}\left \|    (V_{N,\lam}  )^{-1/2}  \right\|_{\mathbb{R}^{M \times M} } \\ 
    & \leq   M R \left(   \log \left(2 + \frac{
    %\| \Sigma \|_{\mathbb{R}^{M \times M}}
     \ol \xi_\Sigma
    }  {C_{\ref{assum:concentration_inequ}}(\dl) } \right)+\frac{1}{2} \log(N) + \log(\delta^{-2}) \right)^{1/2}    \\
&   \quad  \    +L^\mathscr{G}_{\eqref{l-bnd}} C_{\ref{assum:concentration_inequ}}(\delta)  M^{1/2}   \ul\xi^{-1/2}_{\Sigma}.
\end{aligned}
\ee
Item (i) is then a consequence of \eqref{g4} and \eqref{g10}.

(ii)  
% From \eqref{t-op}, we get that $U = \mathcal T u$ for any $u \in \mathcal A_b$. 
Using \eqref{w-def}, Assumption \ref{assum:concentration_inequ}(ii) with $\lambda  = C_{\ref{assum:concentration_inequ}}(\delta) \sqrt{N}$, it follows that $ N\hat \Sigma \leq W_{N,\lam}$.
Together with the Cauchy-Schwarz inequality and \eqref{frob-id}, we deduce that with probability $1-\delta$,
\be \label{tf31} 
\begin{aligned}
   \mathbb{E}_{\mathcal D}\left[  U^{\top} W_{N,\lambda}^{-1} U\right] 
& \leq \mathbb{E}_{\mathcal D}\left[ U^{\top} \left( N \hat \Sigma \right)^{-1} U \right]  \\
&=  \frac{1}{N} \mathbb{E} _{\mathcal D}\left[\left \langle   \hat \Sigma ^{-1} U, U \right \rangle_{\mathbb{R}^{M\times M}} \right] \\ 
&\leq  \frac{1}{N} \mathbb{E}_{\mathcal D} \left[ \| \hat \Sigma ^{-1} U \|_{\mathbb{R}^{M\times M}} \| U \|_{\mathbb{R}^{M\times M}}\right] \\ 
 &\leq  \frac{M}{N} \ul \xi^{-1}_{\Sigma} \mathbb{E}_{\mathcal D}[\| u \|^2],  \quad \textrm{for all } u \in \mathcal A_b,
 \end{aligned}
\ee
where we used $\|U\|_{\mathbb{R}^{M\times M}} \leq M^{1/2} \|u\|$ (cf.~\eqref{t-op}) and \eqref{g3}, which gives $\|\hat \Sigma ^{-1} U \|_{\mathbb{R}^{M\times M}} \leq M^{1/2} \ul \xi^{-1}_{\hat \Sigma} \|u\|$  in the last inequality. 

Following similar lines as in \eqref{g1}--\eqref{g4}, using Theorem \ref{THM:performance_bound}(ii), Assumption \ref{assum:concentration_inequ}(ii) with $\lambda  = C_{\ref{assum:concentration_inequ}}(\delta) \sqrt{N}$ and \eqref{tf31}, we get with probability $1-\delta$,
 \be \label{g11} 
\E_{\mathcal D}\big[  \ell_2(W_{N,\lambda},u) \big]    \leq M^{1/2} (L^{\mathcal A}_{\eqref{l-bnd}})^2  \frac{C_{\eqref{c-2-const}}(N)}{\sqrt{N}} \ul \xi_{\hat \Sigma}^{-1/2}. 
\ee

From \eqref{w-def}, Assumption \ref{assum:concentration_inequ}(ii) with $\lambda  = C_{\ref{assum:concentration_inequ}}(\delta) \sqrt{N}$, it follows that $ \hat \Sigma \leq W_{N,\lam}$. Together with \eqref{frob} we get, 
\be \label{g11.5} 
\left \|    (W_{N,\lam}  )^{-1/2}  \right\|^2_{\mathbb{R}^{M \times M} } = 
\textrm{tr}({W_{N,\lam}}^{-1})  \leq N^{-1}\textrm{tr} (\hat \Sigma^{-1})  
\leq M N^{-1}\ul\xi^{-1}_{\hat \Sigma}, 
\ee
where we recall that $ \ul \xi_{\hat \Sigma}  $ is the minimal eigenvalue of $\hat \Sigma$.

\begin{comment} 
Now, we set $U = \mathcal T u$ and estimate
\begin{align*}
\mathbb{E}\left[ \left \| UW^{-1/2}_{N,\lambda}  \right \|_{\mathbb{R}^{M \times M}}  \right] = \mathbb{E} \left[ \sqrt{ \mathrm{tr}(U W^{-1}_{N,\lambda} U^{\top}) } \right] \leq  \sqrt{\mathbb{E} \left[ \mathrm{tr}(U W^{-1}_{N,\lambda} U^{\top}) \right]}, 
\end{align*}
which, taking Assumption \ref{assum:concentration_inequ}  and the choice $\lambda  = C_{\ref{assum:concentration_inequ}}(\delta) \sqrt{N}$ into account, admits the upper bound
\begin{align*}
    \sqrt{\mathbb{E} \left[ \mathrm{tr}(U W^{-1}_{N,\lambda} U^{\top}) \right]} & \leq   \sqrt{\mathbb{E} \left[ \mathrm{tr}(U ( N \Sigma)^{-1} U^{\top}) \right]} \\
    & \leq \frac{M L^{\mathcal A}_{\eqref{l-bnd}}}{\sqrt{N}} \sqrt{1/\xi_{\min}(\hat{\Sigma}_{\rho})},
\end{align*}
where we used $\mathrm{tr}(A) \leq \mathrm{tr}(B)$ for two positive definite matrices satisfying $A \leq B$.
{\color{red}it seems here an additional factor $M$ appear as we need to estimate the norm of $U$ instead of $u$.}
\end{comment} 

By using \eqref{g11.5} and repeating similar steps that were leading to \eqref{g10}, we get the following bound on the left-hand side of \eqref{c-2-const}, 
\be \label{g12} 
\begin{aligned}
C_{\eqref{c-2-const}} (N) &\leq   M^{1/2} R \left(   \log \left(2 + \frac{  
 % \| \hat \Sigma \|_{\mathbb{R}^{M \times M} }
 \ol \xi_{\hat \Sigma}
}  {C_{\ref{assum:concentration_inequ}}(\dl) } \right)+\frac{1}{2} \log(N) + \log(\delta^{-2}) \right)^{1/2}    \\
&   \quad  \    +L^\mathscr{K}_{\eqref{k-bnd}}C_{\ref{assum:concentration_inequ}}(\delta)  M^{1/2}    \ul\xi^{-1/2}_{\hat \Sigma}. 
\end{aligned}
\ee 
We note that there is a difference in a factor of $\sqrt{M}$ between \eqref{g10} and \eqref{g12}, due to a difference between the powers of $\lam$ in the logarithm terms of \eqref{c-const} and \eqref{c-2-const}, respectively.  From \eqref{g11} and \eqref{g12}, (ii) follows.
\end{proof}

\section{Proof of Theorem \ref{prop-opt-strat}} \label{sec-pf-opt}  

\begin{proof} [Proof of Theorem \ref{prop-opt-strat}] Note that the uniqueness of the optimizer of \eqref{opt-prog} follows immediately from the convexity of the cost functional, see e.g., Theorem 2.3 in \cite{Lehalle-Neum18}. 

Next we derive the solution to \eqref{opt-prog}. Let $\lambda$ be $\mathcal F_{T}$-measurable, square-integrable random variable. We write the Lagrangian of the performance functional in \eqref{per-func} as follows 
\begin{equation} \label{opt-lagran} 
\begin{aligned} 
\mathcal L(u;\lambda) 
=  \mathbb{E}\left[ \frac{1}{2} u^{\top}  (G+G^{\top})u   + u^{\top} A + \lambda  (u^\top \boldsymbol{1}  - x_0) \right].
\end{aligned} 
\end{equation} 
The first order condition with respect to \eqref{opt-lagran} is given by 
\begin{align} 
\nabla_{u}  \mathcal L(u;\lambda) =\mathbb{E}\left[  (G+G^{\top})u   +  A + \lambda \boldsymbol{1}   \right] &=0,  \label{foc}  \\
u^\top \boldsymbol{1}  &= x_0.  \label{constr}
\end{align} 
 Let $ t_i \in \mathbb{T}$, then from \eqref{foc} and the tower property we have 
 $$
 \mathbb{E}\left[   \mathbb{E}_{t_i} \left[ (G+G^{\top})u   +  A + \lambda \boldsymbol{1}   \right] \right] =0.
 $$
 So if 
\begin{equation} \label{proj} 
 \mathbb{E}_{t_i} \left[ (G+G^{\top})u   +  A + \lambda \boldsymbol{1}   \right] =0 ,
\end{equation} 
holds then \eqref{foc} is satisfied. 

Recall that $G_{i,j}=0$ for $j>i$. We rewrite explicitly for each entry in \eqref{proj} (conditioned on $\mathcal F_{t_i}$) to get, 
\begin{equation} \label{gf1} 
 \sum_{ j \leq i }  G_{i,j}u_{t_j} +   \sum_{ i  \leq j \leq  M  }  G_{j,i} \mathbb{E}_{t_i} [u_{t_j}]    +   A_{t_i} +  \mathbb{E}_{t_i}[\lambda]     =0,  \quad \textrm{for all }  i =1,\ldots,M.  
\end{equation} 
Using \eqref{g-dec} we obtain the following condition for the optimal strategy, 
\begin{equation} \label{gf1.1} 
 2\eta  u_{t_i} +  \sum_{ j \leq i }  \wt G_{i,j}u_{t_j} +  \sum_{ i  \leq j \leq  M   }  \wt G_{j,i} \mathbb{E}_{t_i} [u_{t_j}]    +   A_{t_i} +  \mathbb{E}_{t_i}[\lambda]     =0,  \quad \textrm{for all }  i =1,\ldots,M.  
\end{equation}

In the following we fix $t_\ell\in \mathbb T$ such that $t_\ell \leq t_i$. Taking conditional expectation on both sides of \eqref{gf1.1}, using the tower property we get for all $ \ell \leq i$, 
\begin{equation} \label{gf2} 
 2\eta  \mathbb{E}_{t_\ell}[u_{t_i}]  + \sum_{ j < \ell }  \wt G_{i,j} u_{t_j} +   \sum_{ \ell \leq  j \leq i }  \wt G_{i,j}  \mathbb{E}_{t_\ell}[u_{t_j}]     +    \sum_{ i  \leq j \leq  M   } \wt G_{j,i} \mathbb{E}_{t_\ell} [u_{t_j}]    +  \mathbb{E}_{t_\ell}[ A_{t_i}] +  \mathbb{E}_{t_\ell }[\lambda]     =0.  
\end{equation} 
Define $m_{t_\ell} = (m_{t_\ell}(t_1),\ldots,m_{t_\ell}(t_M))^{\top}$ such that 
$$
m_{t_\ell}(t_i)= 1_{\{ t_\ell \leq  t_i \}}   \mathbb{E}_{t_\ell} [ u_{t_i}]. 
$$
By multiplying both sides of \eqref{gf2} by $1_{\{t_\ell \leq  t_i\}}$ we get for all $i=1,\ldots,M$: 
\begin{equation} \label{gf3} 
\begin{aligned} 
0&= 2\eta  m_{t_\ell}(t_i) +1_{\{t_\ell \leq t_i\}}  \sum_{ j < \ell }  
 \wt G_{i,j} u_{t_j} +   1_{\{t_\ell \leq t_i\}}  \sum_{ \ell \leq j \leq i } \wt  G_{i,j} m_{t_\ell}(t_j)    \\ 
&\quad + 1_{\{t_\ell \leq t_i\}}     \sum_{ i  \leq j \leq  M   } \wt G_{j,i} m_{t_\ell}(t_j)   + 1_{\{t_\ell \leq t_i\}} \left( \mathbb{E}_{t_\ell}[ A_{t_i}] +  \mathbb{E}_{t_\ell }[\lambda] \right)   \\
&=f_{t_\ell}(t_i) + 2\eta  m_{t_\ell}(t_i) + 1_{\{t_\ell \leq t_i\}}  \sum_{ \ell \leq j \leq i } \wt G_{i,j} m_{t_\ell}(t_j)    +1_{\{t_\ell \leq t_i\}}     \sum_{ i  \leq j \leq  M   } \wt G_{j,i} m_{t_\ell}(t_j)   \\
&=f_{t_\ell}(t_i) + 2\eta  m_{t_\ell}(t_i) +    \sum_{ \ell \leq j \leq i }  \wt G^{(\ell)}_{i,j} m_{t_\ell}(t_j)    +    \sum_{ i  \leq j \leq  M   } ((\wt G^{\top})^{(\ell)})_{i,j} m_{t_\ell}(t_j) \\
&=f_{t_\ell}(t_i) + 2\eta  m_{t_\ell}(t_i) +    
 \wt G^{(\ell)}_{i, \cdot} m_{t_\ell}    + ((\wt G^{\top})^{(\ell)})_{i,\cdot} m_{t_\ell}, 
\end{aligned} 
\end{equation} 
where $\wt G^{(\ell)}_{i, \cdot}$ is the $i$-th row of the matrix $ \wt G^{(\ell)}$ which was introduced in \eqref{tile-g-i} and
\be \label{f-proc} 
f_{t_\ell}(t_i) :=   1_{\{t_\ell \leq t_i\}}  \sum_{ j < \ell }  \wt G_{i,j} u_{t_j}  + 1_{\{t_\ell \leq t_i\}}  \left( \mathbb{E}_{t_\ell}[ A_{t_i}] +  \mathbb{E}_{t_\ell }[\lambda]  \right).
\ee

Let $f_{t_\ell} =  (f_{t_\ell}(t_1),\ldots,f_{t_\ell}(t_M))^{\top}$, then from \eqref{gf3} we get,
\begin{equation} \label{gf4} 
\left( 2\eta \mathbb I_M+ \wt G^{(\ell)}    +       (\wt G^{\top})^{(\ell)}  \right) m_{t_\ell}  = -f_{t_\ell}.
 \end{equation} 
We present the following technical lemma, which will be proved at the end of this section.  
\begin{lemma} \label{lemma-d-inv} 
The matrix 
$$D^{(\ell)} =  2\eta \mathbb I_M+ \big(\wt G    +     \wt G^{\top} \big)^{(\ell)}  $$ 
is invertible for any $\ell=1,\ldots,M$. 
\end{lemma}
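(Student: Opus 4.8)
The starting point is to identify the operation $X \mapsto X^{(\ell)}$ from \eqref{tile-g-i} with a diagonal projection. Writing $P_\ell = \operatorname{diag}(0,\ldots,0,1,\ldots,1) \in \mathbb{R}^{M\times M}$ for the matrix with ones in positions $\ell,\ldots,M$ and zeros elsewhere, one has $(P_\ell X)_{i,j} = X_{i,j}\mathrm{1}_{\{i \geq \ell\}}$, so that $\wt G^{(\ell)} = P_\ell \wt G$ and $(\wt G^{\top})^{(\ell)} = P_\ell \wt G^{\top}$. Setting $S := \wt G + \wt G^{\top}$, which is symmetric and positive semi-definite by \eqref{g-non-neg-def}, this rewrites the matrix as
$$
D^{(\ell)} = 2\eta \mathbb{I}_M + P_\ell S .
$$
Since $D^{(\ell)}$ is a finite square matrix, it suffices to establish injectivity, which I would do by a direct kernel argument.

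\textbf{Kernel argument.} Suppose $D^{(\ell)} x = 0$ for some $x\in\mathbb{R}^M$, i.e.\ $P_\ell S x = -2\eta x$. The first step is to exploit the block structure: the left-hand side vanishes in its first $\ell-1$ coordinates by definition of $P_\ell$, and since $\eta>0$ this forces $x_i = 0$ for all $i<\ell$, hence $P_\ell x = x$. The second step is to take the inner product of $P_\ell S x = -2\eta x$ with $x$ and use that $P_\ell$ is symmetric together with $P_\ell x = x$, giving
$$
-2\eta\|x\|^2 = \langle P_\ell S x, x\rangle = \langle S x, P_\ell x\rangle = \langle S x, x\rangle \ge 0 ,
$$
where the final inequality is the positive semi-definiteness of $S$. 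Because $\eta>0$, this yields $\|x\|=0$, so $x=0$ and $D^{(\ell)}$ is invertible.

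\textbf{Main obstacle.} The one genuine difficulty is that $P_\ell S$ is \emph{not} symmetric, so invertibility cannot be read off from an eigenvalue bound on a symmetric matrix. The resolution, and the crux of the proof, is the observation that any kernel vector is automatically supported on the coordinates $\{\ell,\ldots,M\}$ on which $P_\ell$ acts as the identity; restricted to that support the non-symmetric form $x\mapsto\langle P_\ell S x, x\rangle$ coincides with the genuinely nonnegative form $\langle S x, x\rangle$, which contradicts the strict negativity $-2\eta\|x\|^2$ imposed by the $2\eta\mathbb{I}_M$ term. All remaining steps are routine, so I expect no further complications.
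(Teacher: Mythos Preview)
Your proof is correct. The approach, however, differs from the paper's. The paper argues via determinants: since the first $\ell-1$ rows of $D^{(\ell)}$ have only the diagonal entry $2\eta$ nonzero, a cofactor expansion gives $\det D^{(\ell)} = (2\eta)^{\ell-1}\det\bigl(D^{(1)}_{\ell:M,\ell:M}\bigr)$, and the latter factor is nonzero because any principal submatrix of the symmetric positive definite matrix $D^{(1)}=2\eta\mathbb{I}_M+S$ is itself positive definite. Your kernel argument reaches the same conclusion more directly: you observe that any putative kernel vector must be supported on coordinates $\{\ell,\ldots,M\}$, on which $P_\ell$ acts as the identity, so the nonsymmetric form $\langle P_\ell Sx,x\rangle$ collapses to the genuinely nonnegative $\langle Sx,x\rangle$, contradicting the strictly negative value $-2\eta\|x\|^2$. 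Your route avoids the determinant machinery and the appeal to the principal-submatrix fact, while the paper's computation has the minor advantage of yielding the explicit determinant value.
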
 

From \eqref{gf4} and Lemma \ref{lemma-d-inv} we get, 
\begin{equation} \label{gf5} 
 m_{t_\ell}  = - \left( D^{(\ell)} \right)^{-1}  f_{t_\ell}, \quad  \textrm{for all } \ell =1,\ldots,M,  
 \end{equation}
We now plug-in \eqref{gf5} to \eqref{gf1.1} to get an equation for $u_{t_i}$  
\begin{equation} \label{gf6} 
2\eta  u_{t_i} +  \sum_{ j \leq i }  \wt G_{i,j}u_{t_j} -   \sum_{ i  \leq j \leq  M   }  \wt G_{j,i} (D^{(i)})^{-1}_{j, \cdot} f_{t_i}    +   A_{t_i} +  \mathbb{E}_{t_i}[\lambda]     =0.
\end{equation} 

We first consider the second and third terms on the left-hand side of \eqref{gf6}. Using \eqref{f-proc} we get, 
\allowdisplaybreaks
\begin{equation} \label{gf7} 
\begin{aligned}
& \sum_{ j \leq i } \wt G_{i,j}u_{t_j} - \sum_{ i  \leq j \leq  M   }   \wt G_{j,i} (D^{(i)})^{-1}_{j, \cdot} f_{t_i} \\
 &= \sum_{ j \leq i }  \wt G_{i,j}u_{t_j}  - \sum_{ i  \leq j \leq  M   } \wt G_{j,i} \sum_{k=1}^M (D^{(i)})^{-1}_{j, k} f_{t_i}(t_k) \\
& =   \sum_{ j \leq i }  \wt G_{i,j}u_{t_j}  -  \sum_{ i  \leq j \leq  M   } \wt  G_{j,i} \sum_{k=1}^M (D^{(i)})^{-1}_{j, k} \left( 1_{\{t_i \leq t_k\}}  \sum_{ \ell < i }  \wt G_{k,\ell} u_{t_\ell}  + 1_{\{t_i \leq t_k\}} \big(  \mathbb{E}_{t_i}[ A_{t_k}] +  \mathbb{E}_{t_i }[\lambda] \big)\right) \\
&=  \sum_{ j \leq i }  \wt G_{i,j}u_{t_j}  -  \sum_{ i  \leq j \leq  M   }  \wt G_{j,i} \sum_{k=1}^M (D^{(i)})^{-1}_{j, k}  1_{\{t_i \leq t_k\}}  \sum_{ \ell < i }  \wt G_{k,\ell} u_{t_\ell}  \\
&\quad - \sum_{ i  \leq j \leq  M   }  \wt G_{j,i} \sum_{k=1}^M (D^{(i)})^{-1}_{j, k} 1_{\{t_i \leq t_k\}} \left(     \mathbb{E}_{t_i}[ A_{t_k}] +  \mathbb{E}_{t_i }[\lambda] \right) \\
&=\sum_{j\leq i} K_{i,j}u_j  -\sum_{ i  \leq j \leq  M   }   \wt G_{j,i} \sum_{k=1}^M (D^{(i)})^{-1}_{j, k} 1_{\{t_i \leq t_k\}}   \left(   \mathbb{E}_{t_i}[ A_{t_k}] +   \mathbb{E}_{t_i }[\lambda] \right), \\
\end{aligned}
\end{equation} 
where $K$ is the following lower triangular matrix, 
\be \label{k-def} 
\begin{aligned}
K_{i,\ell} & \coloneqq  \wt G_{i,\ell} -1_{\{t_\ell < t_i\}}   \sum_{i \leq q \leq M} \wt G_{q,i} \sum_{k=1}^{M} (D^{(i)})^{-1}_{q,k} \mathrm{1}_{\lbrace t_i \leq t_k \rbrace} \wt G_{k,\ell}  \\
 & =  \wt G_{i,\ell} - \mathrm{1}_{\lbrace t_\ell <  t_i \rbrace}  \sum_{k=1}^{M}  \sum_{1 \leq q \leq M} \wt G_{q,i}(D^{(i)})^{-1}_{q,k} \wt G^{(i)}_{k,\ell} \\
     & =   \wt G_{i,l} -   \mathrm{1}_{\lbrace t_\ell <  t_i \rbrace}  \sum_{k=1}^{M}(\wt G^{\top} (D^{(i)})^{-1})_{i,k}\wt G^{(i)}_{k,\ell}  \\
          & =  \wt G_{i,\ell} -   \mathrm{1}_{\lbrace t_\ell <  t_i \rbrace} (\wt G^{\top} (D^{(i)})^{-1} \wt G^{(i)})_{i,\ell },
          \end{aligned}
\ee
which agrees with \eqref{k-def-og}.

From \eqref{gf6} and \eqref{gf7} we get, 
\begin{equation} \label{gf8} 
\begin{aligned}
2\eta  u_{t_i}+    \sum_{j\leq i} K_{i,j} u_{t_j}& =  \sum_{k=1}^M \sum_{ 1 \leq j \leq  M   }  \wt G_{j,i} (D^{(i)})^{-1}_{j, k} 1_{\{t_i \leq  t_k\}}  \left(    \mathbb{E}_{t_i}[ A_{t_k}] +    \mathbb{E}_{t_i }[\lambda] \right)  - A_{t_i} - \mathbb{E}_{t_i}[\lambda] \\
  & = \sum_{k=1}^M  ( \wt G^{\top}(D^{(i)})^{-1})_{i,k}  1_{\{t_i \leq  t_k\}} \left(   \mathbb{E}_{t_i}[ A_{t_k}] +  \mathbb{E}_{t_i }[\lambda] \right)  - A_{t_i} - \mathbb{E}_{t_i}[\lambda] \\    
        & =  -A_{t_i} + \sum_{k=1}^M  ( \wt G^{\top} (D^{(i)})^{-1})_{i,k}    1_{\{t_i \leq  t_k\}}   \mathbb{E}_{t_i}[ A_{t_k}]  \\
        &\quad   - \left(1-\sum_{k=1}^M  ( \wt G ^{\top} (D^{(i)})^{-1})_{i,k}1_{\{t_i \leq  t_k\}} \right) \mathbb{E}_{t_i}[\lambda] . 
\end{aligned}
\end{equation}
We define $g=(g_{t_1},\ldots,g_{t_M})^\top$ and $a=(a_{t_1},\ldots,a_{t_M})^\top$ as follows,   
\begin{equation} \label{} 
\begin{aligned}
g_{t_i} &=  -A_{t_i} +\sum_{k=1}^M  ( \wt G^{\top} (D^{(i)})^{-1})_{i,k}   1_{\{t_i \leq  t_k\}}  \mathbb{E}_{t_i}[ A_{t_k}],    \\ 
a_{t_i} &=   -1+\sum_{k=1}^M  ( \wt G^{\top} (D^{(i)})^{-1})_{i,k}1_{\{t_i \leq   t_k\}}.
\end{aligned}
\end{equation}
Writing \eqref{gf8} in matrix form gives   
\begin{equation} \label{gf9} 
  (2\eta  \mathbb{I}_M +  K) u  =  g + a^\top \mathbb{I}_M \mathbb{E}_{\cdot }[\lambda],  \\
 \end{equation}
where we introduce the notation 
$$ 
\mathbb{E}_{\cdot }[\lambda]  = (\mathbb{E}_{t_1}[\lambda], \ldots,\mathbb{E}_{t_M}[\lambda])^\top, 
$$
so that, 
$$
a^\top  \mathbb{I}_M \mathbb{E}_{\cdot }[\lambda] =  (a_{t_1}  \mathbb{E}_{t_1 }[\lambda], \ldots, a_{t_M}  \mathbb{E}_{t_M }[\lambda])^{\top}.
$$
From \eqref{k-def} it follows that $2\eta  \mathbb{I}_M +  K$ is lower triangular matrix and 
$$
(2\eta  \mathbb{I}_M +  K)_{j,j} = 2\eta + \tilde G_{j,j} > 0,   \quad \textrm{for all } j=1,\ldots,M, 
$$
hence it is invertible. We therefore get from \eqref{gf9},
\begin{equation} \label{gf10} 
    u =  (2\eta  \mathbb{I}_M +  K)^{-1} (g + a^\top  \mathbb{I}_M  \mathbb{E}_{\cdot }[\lambda]). 
\end{equation}
Let 
$$
\wt A  = (2\eta  \mathbb{I}_M +  K)^{-1}  g, 
$$
and note that $\wt A_{t_i}$ is $\mathcal F_{t_i}$ measurable since $(2\eta  \mathbb{I}_M +  K)$ and hence $(2\eta  \mathbb{I}_M +  K)^{-1}$ are lower triangular matrices. 
We can rewrite \eqref{gf10} as follows: 
$$
u_{t_i} = \wt A_{t_i} +   \sum_{j \leq i} (2\eta  \mathbb{I}_M +  K)^{-1}_{i,j} a_{t_j}  \mathbb{E}_{t_j }[\lambda]. 
$$
By using the tower property we get for any $r< i$, 
\begin{align*} 
\E_{t_r} [u_{t_i}]  &= \E_{t_r} [\wt A_{t_i}] +   \sum_{r \leq j \leq i} (2\eta  \mathbb{I}_M +  K)^{-1}_{i,j} a_{t_j}  \mathbb{E}_{t_r }[\lambda]+\sum_{j < r  } (2\eta  \mathbb{I}_M +  K)^{-1}_{i,j} a_{t_j}  \mathbb{E}_{t_j }[\lambda].
\end{align*} 

Summing over $i$ and using $\sum_{i=1}^M\E_{t_r} [u_{t_i}] =x_0$, which follows from \eqref{constr}, we get  
\begin{align*} 
x_0
 &= \sum_{i=1}^M \E_{t_r} [\wt A_{t_i}] +    \mathbb{E}_{t_r }[\lambda]\sum_{i=1}^M {\color{black} \sum_{r  \leq j \leq M} }(2\eta  \mathbb{I}_M +  K)^{-1}_{i,j} a_{t_j} +\sum_{i=1}^M \sum_{  j <r  }(2\eta  \mathbb{I}_M +  K)^{-1}_{i,j}  a_{t_j}  \mathbb{E}_{t_j }[\lambda]. 
\end{align*} 
It follows that $ \mathbb{E}_{t_r } [\lambda]$ satisfies the following forward equation: 
\begin{align*} 
  c_r +b_r \mathbb{E}_{t_r }[\lambda]+ \sum_{ j <r  } h_j   \mathbb{E}_{t_j }[\lambda] =x_0, \quad r= 1, \ldots, M.
\end{align*} 
 where 
\begin{align*} 
c_r &=   \sum_{i=1}^M \E_{t_r} [\wt A_{t_i}] ,  \quad b_{r}  =\sum_{i=1}^N {\color{black}\sum_{r  \leq j \leq M}} (2\eta  \mathbb{I}_M +  K)^{-1}_{i,j} a_{t_j}, \quad  h_j = \sum_{i=1}^M(2\eta  \mathbb{I}_M +  K)^{-1}_{i,j} a_{t_j},
\end{align*} 
which admits the following solution: 
$$
\mathbb{E}_{t_{r+1}}[\lam]= \frac{(b_r-h_r) \mathbb{E}_{t_{r}}[\lam]- (c_{r+1}-c_r)}{b_{r+1}} ,  \quad   \mathbb{E}_{t_1}[\lam] = \frac{x_0-c_1}{b_1}.
$$

\end{proof}

\begin{proof} [Proof of Lemma \ref{lemma-d-inv}]
From \eqref{g-def} and \eqref{g-dec} if follows that $\wt G + \wt G^{\top}$ is symmetric nonnegative definite matrix. Therefore, $D^{(1)}$ is symmetric positive definite, hence it is invertible, or equivalently its determinant is nonzero. Now, $D^{(2)}$ is the same as $D^{(1)}$ except that the off-diagonal entries of the first row, $D^{(2)}_{1,j}$, for $j =2, \ldots, M$  are zero. In general, for $\ell \geq 2$, the entries $D^{(\ell)}_{k,j}$, for $k= 1, \ldots, \ell-1$ and $j \neq  k$, of $D^{(\ell)}$ are zero and equal to $D^{(1)}$ otherwise. Therefore, the determinant of $D^{(2)}$ can be expressed as $$\mathrm{det}(D^{(2)}) = D^{(2)}_{1,1}\mathrm{det}(D^{(2)}_{2:M,2:M}),$$
where $D^{(2)}_{2:M,2:M}$ is the symmetric $\mathbb{R}^{(M-1) \times (M-1)}$-matrix obtained by deleting the first row and first column of $D^{(2)}$ (or equivalently the first row and first column of $D^{(1)}$). Now, observe that $$D^{(2)}_{1,1}\mathrm{det}(D^{(2)}_{2:M,2:M}) \neq 0,$$
since $D^{(2)}_{1,1} \neq 0$ and $\mathrm{det}(D^{(2)}_{2:M,2:M}) \neq 0$, which is a consequence of 
$ x^{\top}D^{(1)} x > 0$, for all $x \in \mathbb{R}^{M}$. To see this, set the first component of $x \in \mathbb{R}^M$ equal to zero which implies $\tilde{x}^{\top} D^{(2)}_{2:M,2:M} \tilde{x} > 0$ for any $\tilde{x} \in \mathbb{R}^{M-1}$, and thus $D^{(2)}_{2:M,2:M}$ is invertible. A similar argument holds for the matrices $D^{(\ell)}$, $\ell >2$.
\end{proof} 

\appendix

\section{Illustrative examples} \label{sec-examples} 
In the following example, we demonstrate how spurious correlation (i) in \eqref{sub-decom} could lead to unfavourable costs. 
\begin{example} [Spurious correlation for propagator estimation]   \label{exmp-corr} 
As described in Section \ref{sec-mot-res}, we consider the scenario of underestimating the price impact, and we capture the estimation error between the true propagator $G^{\star}$ to the estimator $\hat G$ as follows, 
\be \label{g-err} 
 G^{\star}- \hat G >  \Delta \mathbb{I}_M,
\ee
 for some constant $\Delta>0$. The above inequality is understood as partial order defined by the convex cone of positive semi-definite matrices (see Definition \ref{ineq-matrix}). Note that in \eqref{g-err} we neglect the error of off-diagonal elements for convenience. Including these error terms would clearly make the spurious correlation larger. 

From \eqref{sub-decom} it follows that the cost created by the spurious correlation of a greedy strategy $\hat u$ is given by, 
\be\label{diff-j}
\begin{aligned}
\textrm{SpurCor} &= 
 J(\hat{u};G^{\star}) - J(\hat{u};\hat{G}) \\ 
 &=   \mathbb{E}\left[ \sum_{i=1}^{M}  \sum_{j=1 }^{i} ( G^{\star}_{i,j} - \hat{G}_{i,j})   \hat{u}_{t_j} \hat{u}_{t_i}  \right] \\
 &= \E \left[\langle ( G^{\star}  - \hat{G} ) \hat u, \hat u \rangle \right] \\
 & \geq \Delta {\color{black} \mathbb{E}[\|\hat{u}\|^2]   }.
 \end{aligned}
 \ee
Note however that underestimating the propagator $ G^{\star}$ will lead to underestimation of price impact costs and will result in a faster execution strategy and hence create additional trading costs. This monotonicity relation is demonstrated in Figure 4 of \cite{AJ-N-2022}, where it is shown that smaller propagators allow for faster execution rate.  Next, we derive a lower bound for the spurious correlation contribution in \eqref{diff-j}, by considering a special case that will help to simplify the computations and keep this example tractable. We assume that the price impact is temporary, i.e., $\hat G =   \hat \kappa \mathbb{I}_M$, and $ G^\star =     \kappa \mathbb{I}_M$, for some constants $0<\hat \kappa < \kappa$. Note that in \eqref{diff-j} we have $\Delta =    \kappa - \hat \kappa$. We further make the assumption that the unobserved asset price has the following dynamics, 
$$
P_t = \mu t + \sigma W_t, 
$$
where $W_t$ is a Brownian motion and $\mu, \sigma> 0$ are constants. Specifically $\mu$ is a trend that can be regarded as a deterministic signal. In this case one can solve the continuous time analog of the execution problem \eqref{per-func} (see \cite[Exercise E.6.3, Chapter 6.9]{cartea15book}) to get, 
$$
u_t^{\star} = \frac{x_0}{T} +  \frac{\mu}{4\kappa}(T-2t), \quad 0\leq t \leq T. 
$$
In \cite[Table 2 , Section 4.3]{collin.al.20}, the price impact coefficient was estimated from proprietary dataset of real transactions executed by a large investment bank, where it was found that $\kappa \approx 10^{-10}$. Since in a bullish market a daily stock return of $2\%$ is quite common we have $\mu T \approx  0.02$. Therefore, if we wish to execute $x_0=100$ stocks within one day of $8$ hours of trading and the basic time unit is seconds, we have $T=28800$ and it is clear that $\frac{x_0}{T} \ll  \frac{\mu}{2\hat \kappa}(T-2t)$, except for a small interval $T/2 \pm O(\kappa/\mu)$. Outside this interval we have, 
\be \label{ratio-strat} 
\frac{\hat u_t}{u_t^{\star}} = \frac{ \frac{x_0}{T} + \frac{\mu}{2\hat \kappa}(T-2t)}{ \frac{x_0}{T} + \frac{\mu}{2\kappa}(T-2t)}  \approx \frac{\kappa}{\hat \kappa}. 
\ee
Hence from \eqref{ratio-strat} and \eqref{diff-j} it follows that 
\be\label{exmp-spur}
\begin{aligned}
\textrm{SpurCor} &= \E \left[\langle ( G^{\star}  - \hat{G} ) \hat u, \hat u \rangle \right] 
 & \geq (\kappa -\hat \kappa) \int_{0}^T (\hat u_t)^2 \mathrm{d}t 
  &\approx (\kappa -\hat \kappa) \left( \frac{\kappa}{\hat{\kappa}}\right)^2 \int_{0}^T (u^{\star}_t)^2 \mathrm{d}t,
  \end{aligned}
 \ee
 where we have neglected the aforementioned small interval in which $\hat u_t/u_t^{\star} \approx 1$ with a smaller order contribution. 

 We learn from \eqref{exmp-spur} that underestimating the price impact has an amplifying effect of order $(\kappa/\hat\kappa)^2 >1$ on the transaction costs when a greedy strategy is implemented. 

In fact in this simple case we can compare the pessimistic strategy $u^{(1)}$ to the greedy strategy $\hat u$. Recall that, 
$$
 J(u;\kappa \mathbb{I}_M  ) = \kappa \int_0^Tu^2_t \mathrm{d}t +\mu \int_0^T u_t \mathrm{d}t ,  
$$
so the (optimal) uncertainty quantifier in Definition \ref{def-quant} takes the form  $\Gamma(u)= \Delta \int_0^T u_t^2 \mathrm{d}t$ and the pessimistic cost functional in \eqref{j-h-def} is 
$$
J(u;\hat\kappa \mathbb{I}_M ) = (\hat\kappa +\Delta)  \int_0^Tu^2_t \mathrm{d}t+  \mu \int_0^T u_t \mathrm{d}t. 
$$
The minimiser $u^{(1)}$ of $J(u;\hat\kappa \mathbb{I} )$ is given by 
$$
u_t^{(1)} = \frac{x_0}{T} + \frac{\mu}{4(\hat \kappa+\Delta)}(T-2t), \quad 0\leq t \leq T. 
$$
Since $\hat \kappa+\Delta = \kappa$ we get $u^{(1)}= u^{\star}$ so the pessimistic strategy outperforms the greedy strategy $\hat u$ due to the optimality of  $u^{\star}$ with respect to $J( \cdot ;\kappa \mathbb{I}_M)$. Note that in practice we do not know $\kappa$ (or $\Delta$). Therefore, we need to choose a $\Gamma$ which may be not be as sharp as in this example. 
%and is based on some additional arguments (see \eqref{ell-1-def}). 
In Section \ref{sec-numerics} we present further examples for spurious correlation costs in the presence of trading signals and more general propagators.

\end{example}

We recall that $u^{(1)}$ denotes the optimal strategy with respect to the pessimistic cost functional \eqref{j-h-def}. In the following simple example, we show how our choice of $\ell_1(V_{N,\lambda}, \cdot)$ in \eqref{ell-1-def} as a $\dl$-uncertainty quantifier (see \eqref{j-h-def}) helps to keep the pessimistic optimal strategy $u^{(1)}$ close to trajectories which are frequently observed in the dataset. 
 
\begin{example} [Effect of $\ell_1$-penalty function on order execution] \label{exmp-l1-pen} The role of the penalty $\ell_1$ in  \eqref{ell-1-def} is to discourage the strategies to visit out-of-distribution actions. We will illustrate this effect in the following simple example.   
Consider a dataset of $N$ buy metaorders, such that each order is executed over two even time bins, that is $M=2$. The dataset contains the following strategies $u^{(i)} = (u^{(i)}_1, u_2^{(i)})^\top$, for $i=1,\ldots,N$. It is well known that in buy execution problems without signals we should have 
\be \label{asmp} 
u^{(i)}_1 \gg u^{(i)}_2 \geq 0,  \quad \textrm{for all } i=1,\ldots,N, 
\ee
 due to risk aversion terms and inventory constraints (see e.g., \cite[Chapter 6.5, Figure 6.2]{cartea15book}). For our example we choose for simplicity $\lam=1$ in $V_{N,\lam}$ and hence 
$$
V_{N, 1 } = \sum_{i=1}^N u^{(i)}(u^{(i)})^\top + \mathbb{I}_M  = \begin{pmatrix}
     \sum_{i=1}^N (u_1^{(i)})^2 +1  &  \sum_{i=1}^N u_1^{(i)}  u_2^{(i)}    \\
     \sum_{i=1}^N u_1^{(i)}  u_2^{(i)}   &  \sum_{i=1}^N (u_2^{(i)})^2 +1  
  \end{pmatrix}.
$$
It follows that we can write, 
$$
V_{N,1} =   \begin{pmatrix}
    a_{1} & a_2   \\
     a_2   &a_3 
  \end{pmatrix}, \quad \textrm{for some } 0< a_3 <a_2 < a_1,
$$
where from the fact that $ a_3 <a_2 $ follows from $u^{(i)}_1  \gg u^{(i)}_2$. Our assumptions also imply $a_1a_3 >a_2^2$. Hence,  $V_{N,1}$ is invertible and we have, 
$$
V^{-1}_{N,1} =  \frac{1}{a_1a_3 - a_2^2}   \begin{pmatrix}
    a_{3} &  - a_2   \\
    - a_2   & a_1 
  \end{pmatrix} =:  \begin{pmatrix}
    b_{1} & b_2   \\
     b_2   &b_3 
  \end{pmatrix} \quad \textrm{ where } b_2<0<b_1<b_3.
$$
Now let $v=(v_1,v_2)^\top$ and consider 
$$
\ell^2_1(v, V^{-1}_{N,1}) = v^\top V^{-1}_{N,1} v = b_1 v_1^2 + 2b_2v_1v_2 + b_3v_2^2.
$$
Since according to our assumption we expect $b_1\ll b_3$, it follows that $\ell_1$ will penalize strategies $v$ where $v_1 \ll v_2$, which are not covered by the dataset (see \eqref{asmp}). 
 \end{example}

\section{Volterra kernel estimation for a noisy dataset} \label{sec-examples_noisy} 
In this example, we construct the trading dataset as follows: the strategies $u_{t_i}^{(n)}$ for each $n \in \lbrace 1, \ldots, N\rbrace$ and $i \in \lbrace 1, \ldots, M \rbrace$ are i.i.d. samples from a normal distribution with mean 50 and standard deviation $9$. We consider the propagator $G^{\star}_{i,j} = \frac{\kappa}{(t_i - t_j +1)^{\beta} }$ $i \geq j$, with $\kappa = 0.01$ and $\beta = 0.4$, 
and apply \eqref{eq:G_n_lambda_volterra} 
to estimate it using observed price trajectories.   For the estimation procedure, we set $\lambda = 10^{-3}$, $N= 252$, $M =78$.  
Numerical results are presented in Figure \ref{FIGURE LABEL_e},
which indicate that \eqref{eq:G_n_lambda_volterra} recovers the true propagator reasonably well.

\begin{figure}[!ht]
\centering
\includegraphics[trim=30 5 30 30, clip, width= 0.48\linewidth]{heatmap_shadow_1_final}
\includegraphics[trim=30 5 30 30, clip, width=0.48\linewidth]{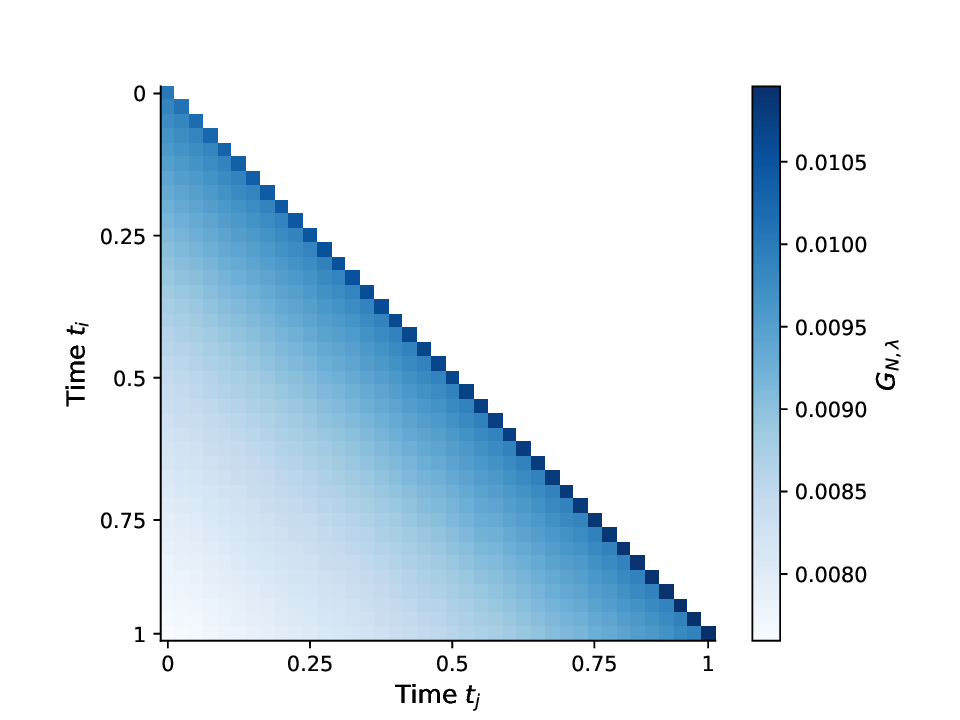}  
\includegraphics[trim=30 5 30 30, clip, width=0.48\linewidth]{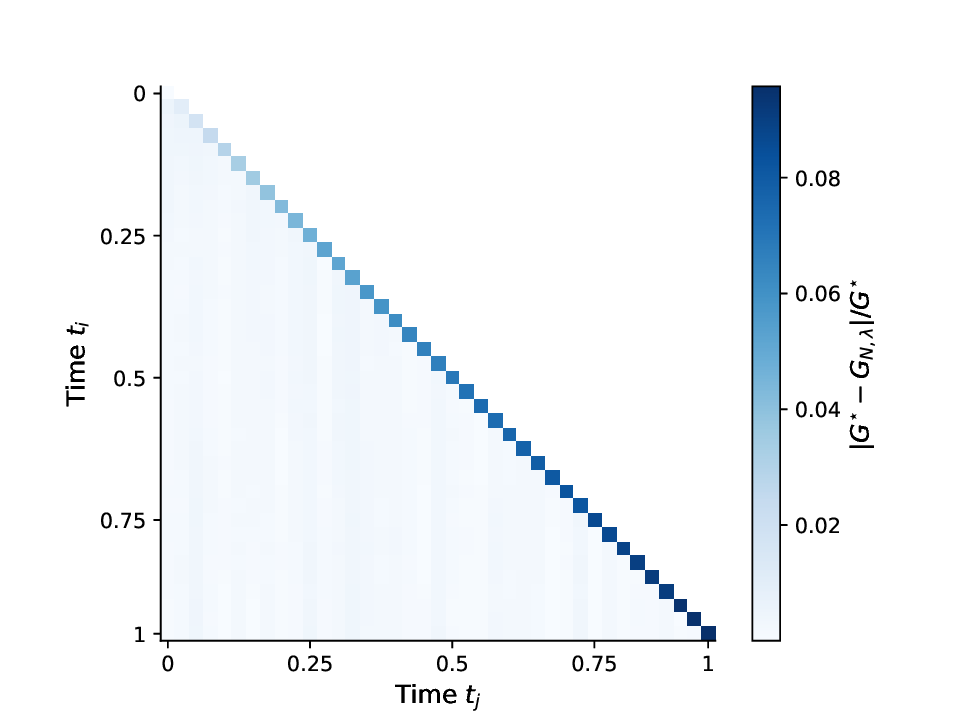}
\caption{
The exact propagator  $G^{\star}$ (left),   the estimated propagator (right) and the relative error (bottom) of the Volterra estimator by observing  noisy trading strategies.
}
\label{FIGURE LABEL_e} 
\end{figure}

 %\bibliographystyle{abbrv}
 %\newpage
%\bibliography{lit}

\end{document}